\renewcommand{\P}{{\mathbb P}}
\newcommand{\Z}{{\mathbb Z}}
\newcommand{\C}{{\mathbb C}}
\newcommand{\Cb}{{\mathbb C}}
\newcommand{\F}{{\mathcal F}}
\newcommand{\E}{{\mathcal E}}
\newcommand{\Tc}{{\mathcal T}}
\newcommand{\res}{{\rm res}}
\newcommand{\Pol}{{\rm Pol}}
\newcommand{\Cous}{{\rm Cous}}
\newcommand{\pol}{{\rm pol}}
\newcommand{\Gc}{{\mathcal G}}
\newcommand{\Pc}{{\mathcal P}}
\newcommand{\K}{{\mathcal K}}
\newcommand{\LL}{{\mathcal L}}
\newcommand{\A}{{\mathbf A}}
\newcommand{\Ab}{{\mathbb A}}
\newcommand{\OO}{{\mathcal O}}
\newcommand{\Hom}{{\rm Hom}}
\newcommand{\Homc}{{\mathcal Hom}}
\newcommand{\Ker}{{\rm Ker}}
\newcommand{\Tr}{{\rm Tr}}
\newcommand{\Spec}{{\rm Spec}}
\renewcommand{\dim}{{\rm dim}}
\theoremstyle{plain}
\newtheorem{theor}{Theorem}[section]
\newtheorem{prop}[theor]{Proposition}
\newtheorem{corol}[theor]{Corollary}
\newtheorem{lemma}[theor]{Lemma}
\theoremstyle{remark}
\newtheorem{rmk}[theor]{Remark}
\newtheorem{examp}[theor]{Example}
\theoremstyle{definition}
\newtheorem{defin}[theor]{Definition}
\newtheorem{defin-prop}[theor]{Definition-Proposition}
\newcommand{\toiso}{\xrightarrow{\sim\;}}
\newcommand{\supp}{{\rm supp}}
\title{A polar complex for locally free sheaves}
\author{ Sergey Gorchinskiy and Alexei Rosly\\ \\
\small{Steklov Mathematical Institute, Moscow, Russia}\\
\small{e-mail: {\tt gorchins@mi.ras.ru}}\\
\small{Institute of Theoretical and Experimental Physics}\\
\small{and Institute for Information Transmission Problems, Moscow, Russia}\\
\small{e-mail: {\tt rosly@itep.ru}}}
\date{}
\begin{document}
\maketitle

\begin{abstract}
We construct the so-called polar complex for an arbitrary locally
free sheaf on a smooth variety over a field of characteristic zero.
This complex is built from logarithmic forms on all irreducible
subvarieties with values in a locally free sheaf. We prove that
cohomology groups of the polar complex are canonically isomorphic to
the cohomology groups of the locally free sheaf. Relations of the
polar complex with Rost's cycle modules, algebraic cycles, Cousin complex,
and adelic complex are discussed. In particular, the polar complex is a
subcomplex in the Cousin complex. One can say that the polar complex is a
first order pole part of the Cousin complex, providing a much smaller, but,
in fact, quasiisomorphic subcomplex.
\end{abstract}

\section{Introduction}

In this paper we are going to study certain complexes where the differential is
constructed from a residue map acting on differential forms with logarithmic
singularities. To explain the motivations, let us start from the following classical
fact: a divisor $\sum a_i x_i$ with complex coefficients $a_i$ on a compact Riemann
surface is the residue of a logarithmic $1$-form if and only if $\sum_i a_i=0$. A
higher-dimensional version of this concerns a \mbox{$p$-di\-men\-sional} subvariety $Z$
in a smooth complex algebraic variety $X$ together with a holomorphic
differential form of top degree (that is, a $p$-form) $\alpha\in
H^0(Z,\omega_Z)$\,. The question is then whether there exists a
$(p+1)$-dimensional subvariety $Y$ in $X$ containing $Z$ and a logarithmic top
form $\beta\in H^0(Y,\omega_Y(Z))$ such that $\res\,\beta=\alpha$. More
generally, one can consider formal linear combinations of subvarieties together
with logarithmic top forms on them.

When $X$ is a compact Riemann surface, the sum $\sum_i a_i$ is an element in the
group~$\Cb$ and one can canonically identify $\Cb$ with $H^1(X,\omega_X)$ by
taking integrals of (1,1)-forms over $X$. In the higher-dimensional case it is
shown in \cite{KRT} that the obstruction for the existence of $(Y,\beta)$ as
above is an element in the group $H^{d-p}(X,\omega_X)$, where $X$ is a
smooth complex projective variety and $d=\dim\,X$. The aim of this paper is to
prove an analogous result for smooth varieties over a field of characteristic
zero and forms with coefficients in vector bundles.

Given a smooth variety $X$ and a locally free sheaf $\F$ on $X$, one constructs a
chain complex $\Pol_{\,\bullet}(X,\F)$, called a {\it polar complex}, whose
$p$-chains are finite formal sums of pairs $(Z,\alpha)$, where $Z$ is a
$p$-dimensional irreducible subvariety in $X$ and $\alpha$ is a logarithmic top form
on $Z$ with coefficients in $\F|_Z$ (see Definitions~\ref{defin-polarformsubvar}
and~\ref{defin-polarcomplex}). The boundary map in the polar complex is defined by
the residue morphism on logarithmic forms. The main result of the paper
(Theorem~\ref{theor-main}) is that, for $\dim\,X=d$, there is a canonical isomorphism
\begin{equation}\label{eq-main}
H^{d-p}(X,\omega_X\otimes_{\OO_X}\F)\cong H_p(\Pol_{\,\bullet}(X,\F))\,.
\end{equation}
This theorem answers the above question about obstructions to being one's
residue. On the other hand it gives a new interpretation of cohomology groups of
locally free sheaves in terms of logarithmic forms on subvarieties.

The isomorphism~\eqref{eq-main} has, in particular, the following interpretation when $X$ is projective.
The homology groups of the polar complex,
$H_{p}(\Pol_{\,\bullet}(X,\F))$, admit a canonical pairing with
$H^p(X,\F^\vee)$, the cohomology of a dual to $\F$. This is described as
follows. For a polar $p$-cycle $(Z,\alpha)$, $\dim\,Z=p$, and an element $u\in
H^p(X,\F^\vee)$, we can define $u'\in H^p(Z,\F^\vee|_Z)$ as a restriction of $u$
to $Z$ and form a natural pairing $\langle\alpha,\,u'\rangle$, because~$\alpha$
is an element of $H^0(Z\,,\omega_Z\otimes_{\OO_Z}\F|_Z)$ (we assume here~$Z$
being smooth for simplicity). We claim that if $\alpha=\res\,\beta$ for $(Y,\beta)$ as above, then this pairing vanishes for any $u$. To see this, represent $u$ by a $(p,0)$-form $\nu$ with coefficients in $\F^{\vee}$. Then, the pairing in question is equal to the integral $\int_Z {\rm ev}(\alpha\wedge\nu|_Z)$, where ${\rm ev}$ denotes the natural coupling of forms with coefficient in $\F$ and with coefficients in $\F^{\vee}$, which gives usual forms. This integral equals to $\frac{1}{2\pi i}\int_Y {\rm ev}(\beta\wedge(\bar\partial\nu)|_Y)$ by a higher-dimensional generalization of the Cauchy formula. Finally, the integrand vanishes, as $\nu$ is $\bar\partial$-closed. So, taking a sum over all \mbox{$p$-dimensional} subvarieties~$Z$ in $X$, we obtain a pairing
$H_{p}(\Pol_{\,\bullet}(X,\F))\otimes H^p(X,\F^\vee) \to\C$. The
isomorphism~\eqref{eq-main} amounts to saying that this pairing is non-degenerate. In particular, we prove that if the pairing with a $(Z,\alpha)$ vanishes, then $\alpha$ is one's residue.

Our proof of the relation~\eqref{eq-main} is based on an analogy between
cohomology groups of locally free sheaves on algebraic varieties and singular
cohomology of local systems on manifolds, see Subsection~\ref{subsection-topanalogy}.
We also use Quillen's trick in algebraic $K$-theory as an algebraic analogue of a collar neighborhood in topology.
Let us explain the idea of this construction in the case of trivial coefficients,
$\F=\OO_X$.
Given a \mbox{$p$-dimensional} subvariety $Z\subset X$, let us look for a $(p+1)$-dimensional
subvariety $T\subset X$ such that $T$ contains $Z$ and possesses a morphism $\rho\!:T\to Z$ so that $Z$ is a retract of $T$. Assume also that $Z$ is given by an equation $t=0$ on $T$. Then, we can see that for any regular $p$-form $\alpha$ on $Z$, we can construct a rational $(p+1)$-form $\beta:=\frac{dt}{t}\wedge\rho^*(\alpha)$ on $T$ that satisfies $\res\,\beta=\alpha$. In fact, such $T$ can in general be found only if we work Zariski locally and replace $X$ by a finite cover that has a section over $Z$. This will nevertheless be sufficient for proving that any polar cycle is locally exact. See more details in Section~\ref{section-polarresol}.

Note that methods of the present paper are completely
different from those in \cite{KRT}, where the isomorphism~\eqref{eq-main} is
established for $\F=\OO_X$ and $X$ projective.

One can also find relations of the polar complex with other concepts. In the
first place, the polar complex can be regarded as a Gersten type
complex\footnote{By this we mean a complex whose $p$-th term is naturally
written as a direct sum over $p$-dimensional irreducible subvarieties.}
associated with a certain cycle module over $X$. Recall that cycle modules were
introduced by M.\,Rost in~\cite{Rost}. We prove in
Theorem~\ref{theor-polarresol} that the polar complex is locally exact on $X$
except for its left most term. It is worth noticing that an analogue of
Theorem~\ref{theor-polarresol} is not true for an arbitrary cycle
module over a non-trivial $X$ (different from a point). Nevertheless, the polar
complex for a locally free sheaf $\F$ on $X$ corresponds to a cycle module over
$X$, which, for a non-trivial $\F$ (i.e., different from $\OO_X$)
is non-constant in Rost's terminology, that is to say, this cycle module is
not a pull-back of a one defined over a point. Thus, the polar complex gives a
new example of a (graded component in a) non-constant cycle
module over a variety such that, nevertheless, the corresponding Gersten
complex is locally exact except for its left most term.

It might also be worth mentioning that, the polar complex provides a canonical
construction of classes of algebraic $p$-cycles in groups
$H^{d-p}(X,\Omega^{d-p}_X)$.
On the other hand, the polar complex $\Pol_{\,\bullet}(X,\F)$ is canonically a
subcomplex in the Cousin complex of $\omega_{X}\otimes_{\OO_X}\F$. The
isomorphism~\eqref{eq-main} implies that the polar complex is actually
quasiisomorphic to the Cousin complex. One can say that the polar complex is a
first order pole part of the Cousin complex, providing thus a much smaller, but
still quasiisomorphic subcomplex.

In addition, the polar complex on a smooth complex projective variety fills in a
vacant site in a chain of quasiisomorphisms connecting the rational adelic complex
(see \cite{Par,Bei,Hub}) to the Dolbeault complex, such that each complex in this
chain has an explicit geometric description.

Let us mention one more motivation for the construction of polar complexes with
coefficients in locally free sheaves. It comes from a study of a holomorphic analogue
of the linking number. Such an analogue is supposed to be a certain function of
positions of, say, two complex curves in a three-dimensional complex manifold. In the
paper \cite{A}, Atiyah considered the case of two lines in a complex projective space
$\P^3$. He defined a function (or, rather, a section of a line bundle) on the
configuration space of such pairs and exploited it in a construction of Green's
function for a certain Laplace operator. This function depends holomorphically on the
positions of the lines and develops a pole on the discriminant of intersecting lines,
while the topological linking number is a locally constant function on the
configuration space; more on the analogy between topological and holomorphic objects
see in Subsection~\ref{subsection-topanalogy}. Atiyah suggested to regard this
function as a holomorphic analogue of the classical Gauss linking number in topology.
As the latter can be nicely described in terms of singular chains, one may ask what
should be a holomorphic analogue of singular chains appropriate to the context of the
above holomorphic linking. The polar complex can be also thought of as an answer to
that question. Indeed, one can notice that in the construction of \cite{A} each line
$L\subset \P^3$ is implicitly endowed with a section
$\alpha\in H^0(L,\omega_L\otimes\OO_{\P^3}(2)|_L)$, the latter space being
(non-ca\-no\-ni\-cally) isomorphic to $\C$. Thus, $(L,\alpha)$ is a polar
1-cycle in $\P^3$ with coefficients in~$\OO_{\P^3}(2)$. The holomorphic linking
number for polar cycles in arbitrary smooth varieties with coefficients in
locally free sheaves can be defined in the same way as in \cite{KR,KR2,FT}.

The paper is organized as follows. In Section~\ref{sect-prelim} we recollect general
properties of logarithmic forms and fix some notations. In particular, we prove here
a version of the \mbox{Grauert--Riemenschneider} theorem
(Proposition~\ref{prop-logrational}). Note that the proof is purely algebraic.
Section~\ref{section-mainresult} contains the main definitions and describes the main
result (Definitions~\ref{defin-polarformsubvar}, \ref{defin-polarcomplex} and
Theorem~\ref{theor-main}). We also give some topological motivations in
Subsection~\ref{subsection-topanalogy}. The proof of the main result in
Section~\ref{section-motivic} is done in several steps. Namely, in
Subsection~\ref{subsection-polarshv} we define for each locally free sheaf a certain
subsheaf of abelian groups, which we call a polar sheaf
(Definition~\ref{defin-polarsheaf}). The use of polar sheaves allows us to split the
main theorem into two assertions about polar sheaves (Theorem~\ref{theor-polarresol}
and Theorem~\ref{theor-polarquasiis}). The second assertion says that cohomology
groups of a locally free sheaf coincide with cohomology groups of its polar subsheaf.
Subsections~\ref{section-cyclemod} and~\ref{subsection-geomlem} contain simple
auxiliary lemmas. Quillen's trick is applied to prove Theorem~\ref{theor-polarresol}
for projective varieties in Subsection~\ref{theor-polarresol}.
Theorem~\ref{theor-polarquasiis} for projective varieties is proved in
Subsection~\ref{section-relationcoher}. Finally, reduction of the general case to the
projective one is done in Subsection~\ref{section-reductionproj}. In
Section~\ref{section-ps}, the relations of the polar complex with Rost's cycle
modules, algebraic cycles, Cousin complex, and adelic complex are
discussed.

\bigskip
We are grateful to A.\,Beilinson, A.\,Bondal, Vik.\,S.\,Kulikov,
D.\,Orlov, A.\,N.\,Parshin, C.\,Shramov, and V.\,Vologodsky for
useful conversations and comments.
We are also indebted to anonymous referees for a number of valuable remarks,
which helped us to improve the paper substantially.
During the work on the paper we
enjoyed the hospitality and the excellent working conditions at the
University of Aberdeen, the Institute for Physics and Mathematics of
Universe (IPMU) in Kashiwa, and Forschunginstitut f\"ur Mathematik
(FIM) at the ETH in Z\"urich. The work of S.G. was partially
supported by the grants RFBR 11-01-00145, 12-01-31506, 12-01-3302, 13-01-12420, NSh--2998.2014.1, and by Dmitry Zimin's Dynasty Foundation. The work of A.R. was partially supported by the grants
RFBR 11-01-00962, RFBR-09-01-93106-NCNIL-a and by Ministry of Science and Education. This work was also partially supported by AG Laboratory NRU-HSE, RF government grant, ag. 11.G34.31.0023.

\section{Logarithmic forms}\label{sect-prelim}

Throughout the paper we fix a ground field $k$ of {\it characteristic zero}.
We are not assuming that $k$ is algebraically closed. Nevertheless, this does not
lead to any difficulties of arithmetic kind and neither our results nor the
proofs would be simpler for algebraically closed $k$.

All varieties and morphisms between them are considered over $k$. By a
divisor we always mean a Weil divisor. Given a morphism $f$ between varieties,
$f^{-1}(\cdot)$ denotes the set-theoretical preimage of subvarieties (not the
scheme-theoretical one). If not specified, we use Zariski topology on varieties
and by `local' we mean local in Zariski topology.

For a smooth variety $V$, denote by $\Omega^p_V$ the sheaf of differential
$p$-forms. We also set $\omega_V:=\Omega^d_V$, where~$d$ is the dimension of
$V$. For an irreducible variety $V$ (not necessarily smooth), denote by $k(V)$
the field of rational functions on $V$. Let $\Omega^p_{k(V)}$ be the space of
rational $p$-forms on $V$ and, correspondingly,
$\omega_{k(V)}:=\Omega^d_{k(V)}$. Among rational forms, the logarithmic ones
(e.g.\ \cite[Sect.\ 3.5]{GH}, \cite[Sect.\ 8.2]{V}) will be of most importance
for us. Consider a smooth irreducible variety $V$ and a smooth reduced
irreducible divisor~$W$ in~$V$. A rational form $\alpha\in \Omega^p_{k(V)}$
will be called {\it locally logarithmic along} $W$ if both~$\alpha$ and~$d\alpha$ have at most first order poles along $W$. Equivalently, at the generic
point of $W$ the form $\alpha$ can be written (not uniquely) as
$$
\alpha=\frac{dt}{t}\wedge\beta+\gamma\,,
$$
where $t=0$ is an equation of $W$ at its generic point, while $\beta$ and $\gamma$ are rational
$(p-1)$-form and $p$-form on $V$, respectively, which are regular at the generic point of $W$. If now~$\alpha_1$ and~$\alpha_2$ are forms locally logarithmic along $W$, then the form
$\alpha_1\wedge\alpha_2$ is also locally logarithmic along $W$. Moreover, for
any non-zero rational function $h$ on $V$, the rational 1-form $d\log h=dh/h$ is
locally logarithmic along any $W$.

In the above notations, consider the sheaf
$$
\Omega_V^p(\log W):=\Ker\left(\Omega^p_V(W)\to j_*\Omega^p_W(W)\right),
$$
where $j\!:W\hookrightarrow V$. The sheaf $\Omega_V^p(\log W)$ is a locally free
sheaf on $V$. A form \mbox{$\alpha\in H^0(V\smallsetminus W,\Omega_V^p)$} is locally
logarithmic along $W$ if and only if $\alpha\in H^0(V,\Omega_V^p(\log W))$. It
follows from the exact sequence
$$
0\to j^*\OO_V(-W) \to j^*\Omega^1_V\to \Omega^1_W\to 0
$$
and a little linear algebra with wedge powers
that there is a well defined morphism of sheaves
$j^*\Omega^p_V(\log W)\to \Omega^{p-1}_W$. This gives a {\it residue homomorphism}
$$
\res_{VW}\!:H^0(V,\Omega^p_V(\log W))\to H^0(W,\Omega_W^{p-1}).
$$
Locally along $W$ we have $\res_{VW}(\frac{dt}{t}\wedge\beta)=\beta|_W$.

We call $W\subset V$ a {\it simple normal crossing divisor} if $W=\cup W_i$, where
each $W_i$ is a smooth reduced irreducible divisor in $V$ and the divisors $W_i$ meet
transversely at each point of $V$. Explicitly, for any point $x\in V$, local equations
$t_1,\dots,t_r$ of $W_1,\ldots,W_r$ have linearly independent differentials at $x$,
where $W_1,\ldots,W_r$ are components of $W$ that contain $x$. In this case, let
$\Omega^p_V(\log W)$ be the sheaf of rational differential forms that are locally
logarithmic along all divisors $W_i$ and regular elsewhere on $V$. In particular, we

have $\Omega^d_V(\log W)=\omega_V(W)$, where $d$ is the dimension of $V$. Zariski
locally at any point $x\in V$, a form $\alpha\in H^0(V,\Omega_V^p(\log W))$ can be
written (not uniquely) as
\begin{equation}\label{eq-logexpl}
\alpha=\sum_{1\leqslant i_1<\ldots<i_l\leqslant r}\frac{dt_{i_1}}{t_{i_1}}
\wedge\ldots\wedge
\frac{dt_{i_l}}{t_{i_l}}\wedge \beta_{i_1,\ldots,i_l}\,,
\end{equation}
where $l$ runs over $0,\ldots,p$, while $t_1,\dots,t_r$ are local equations of
$W_1,\ldots,W_r$ at $x$, and~$\beta_{i_1,\ldots,i_l}$ is a regular differential form
at $x$ of degree $p-l$. The sheaf $\Omega^p_V(\log W)$ is a locally free sheaf. In
what follows, by a {\it logarithmic $p$-form} on $V$ we will understand an element of
$H^0(V,\Omega_V^p(\log W))$ for some simple normal crossing divisor $W\subset V$.

The properties of logarithmic forms with a smooth divisor of poles described above
extend to a more general case of simple normal crossing divisors. Namely, the
product of differential forms defines a morphism of sheaves
$$
\Omega^p_V(\log W)\otimes_{\OO_X}\Omega^q_V(\log W)\to\Omega^{p+q}_V(\log W).
$$
Moreover, for any non-zero rational function $h$ on $V$ such that $h$ and $h^{-1}$
are regular on the complement $V\smallsetminus W$, the rational differential form
$d\log h=dh/h$ belongs to $H^0(V,\Omega^{1}_{V}(\log W))$. In the above notations,
for any $i$, denote by $W-W_i$ the union of all irreducible components in $W$ except
$W_i$. For any $\alpha\in H^0(V,\Omega^p_V(\log W))$, there is a residue
$$
\res_{VW_i}(\alpha)\in H^0(W_i\smallsetminus (W_i\cap (W-W_i)),\Omega^{p-1}_{W_i}).
$$
Actually, $W_i':=W_i\cap (W-W_i)$ is a simple normal crossing
divisor in $W_i$ and the rational form $\res_{VW_i}(\alpha)$ belongs
to $H^0(W_i,\Omega^{p-1}_{W_i}(\log W'_i))$. Below we recollect some
further properties of logarithmic forms that will be needed later.
First we show that the logarithmicity of rational differential forms
is preserved by pull-backs.

\begin{lemma}\label{lemma-pullbacklog}
Let $f:V'\to V$ be a dominant morphism between smooth varieties and
suppose $W\subset V$, $W'\subset V'$ are simple normal crossing
divisors such that $f^{-1}(W)\subseteq W'$. Then for any
differential form $\alpha\in H^0(V,\Omega^p_{V}(\log W))$, the
rational differential form $f^*\alpha$ belongs to
$H^0(V',\Omega^p_{V'}(\log W'))$.
\end{lemma}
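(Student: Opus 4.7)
The question is \emph{local} on $V'$: since $\Omega^p_{V'}(\log W')$ is a subsheaf of the constant sheaf of rational $p$-forms, it suffices to verify that at each point $x' \in V'$ the form $f^*\alpha$ belongs to the stalk of $\Omega^p_{V'}(\log W')$ at $x'$. Set $x := f(x')$. If $x \notin W$, then $\alpha$ is regular at $x$, so $f^*\alpha$ is regular at $x'$ and there is nothing to check. So we may assume $x \in W$, with local equations $t_1,\dots,t_r$ for the components $W_1,\dots,W_r$ of $W$ passing through $x$, and let $s_1,\dots,s_s$ be local equations at $x'$ for the components $W'_1,\dots,W'_s$ of $W'$ passing through $x'$.

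First I would analyze the pullback of each $t_i$. Since $t_i$ is regular at $x$, the function $f^*t_i$ lies in the local ring $\OO_{V',x'}$; since $f^{-1}(W_i) \subseteq W'$, the zero locus of $f^*t_i$ near $x'$ is supported on $W'$. Using that $\OO_{V',x'}$ is a regular local ring (hence a UFD), one can therefore write
$$
f^*t_i = u_i \cdot s_1^{m_{1i}} \cdots s_s^{m_{si}}, \qquad u_i \in \OO_{V',x'}^*,\ m_{ji} \in \Z_{\geqslant 0}.
$$
Taking logarithmic differentials,
$$
f^*\!\left(\frac{dt_i}{t_i}\right) = \frac{d(f^*t_i)}{f^*t_i} = \frac{du_i}{u_i} + \sum_{j=1}^{s} m_{ji}\,\frac{ds_j}{s_j},
$$
which is a section of $\Omega^1_{V'}(\log W')$ near $x'$, because $du_i/u_i$ is regular at $x'$ and each $ds_j/s_j$ lies in $\Omega^1_{V'}(\log W')$.

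Now I would use the explicit local expression \eqref{eq-logexpl} for $\alpha$ at $x$ and pull it back term by term:
$$
f^*\alpha = \sum_{1\leqslant i_1<\ldots<i_l\leqslant r} f^*\!\left(\frac{dt_{i_1}}{t_{i_1}}\right)\wedge\ldots\wedge f^*\!\left(\frac{dt_{i_l}}{t_{i_l}}\right)\wedge f^*\beta_{i_1,\ldots,i_l}.
$$
Each $f^*\beta_{i_1,\ldots,i_l}$ is regular at $x'$ (as $\beta_{i_1,\ldots,i_l}$ is regular at $x$), and each factor $f^*(dt_{i_k}/t_{i_k})$ is a section of $\Omega^1_{V'}(\log W')$ near $x'$ by the previous step. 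Since $\Omega^\bullet_{V'}(\log W')$ is closed under wedge products and regular differential forms are sections of it, the whole pullback $f^*\alpha$ is a section of $\Omega^p_{V'}(\log W')$ at $x'$, which finishes the proof.

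The main (though mild) obstacle is the factorization of $f^*t_i$ as a unit times a monomial in $s_1,\dots,s_s$; this is where both the smoothness of $V'$ (to have unique factorization locally) and the hypothesis $f^{-1}(W)\subseteq W'$ (to control the support of the zero locus) are essential. Once this is in place, the rest is bookkeeping with \eqref{eq-logexpl}.
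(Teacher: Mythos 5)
Your proof is correct and takes essentially the same route as the paper's: both reduce to the local expression \eqref{eq-logexpl} and observe that the pullback of each $dt_i/t_i$ is again a section of $\Omega^1_{V'}(\log W')$, after which one concludes by closure under wedge products with regular forms. You merely make explicit, via unique factorization $f^*t_i=u_i\prod_j s_j^{m_{ji}}$ in the regular local ring $\OO_{V',x'}$, the step that the paper handles by quoting the general fact that $d\log$ of a rational function is locally logarithmic along any divisor, together with the observation that the poles of $f^*\alpha$ lie in $W'$.
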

\begin{proof}
There is a covering of $V$ by open subsets $U$ such that the restriction
$\alpha|_U$ has the shape as in equation \eqref{eq-logexpl}. Then $f^*\alpha$ is
locally logarithmic along any smooth irreducible divisor in $U':=f^{-1}(U)$, because
$f^*\alpha|_{U'}$ is a sum of products of $d\log$'s of rational functions and
regular differential forms. Since $f^*\alpha$ has poles only along components of
a simple normal crossing divisor $W'$, this completes the proof.
\end{proof}

Let $K$ be a finitely generated field over $k$, that is, $K=k(V)$ for some
irreducible variety $V$ over $k$, and consider the space of rational $p$-forms
$\Omega^p_K$\,. Let now $K\subset K'$ be a finite extension of fields. Then, the
trace map $\Tr_{K'/K}:K'\to K$ induces a map on rational differential forms
$$
\Tr_{K'/K}:\Omega^p_{K'}=\Omega^p_K\otimes_K K'\to \Omega^p_K \,, \qquad p\ge 0 \,.
$$
The trace map commutes with the de Rham differential (but does not respect
multiplication of forms) and there is a projection formula involving the trace and
the pull-back maps on rational differential forms. If $f\!:V'\to V$ is a generically
finite dominant morphism of irreducible varieties then we denote the trace map
$\Tr_{k(V')/k(V)}$ on rational differential forms by $f_*$\,. When $f$ is finite and
unramified, and the varieties $V$ and $V'$ are smooth, the trace of a regular form
$\alpha'$ on $V'$ can be explicitly written at a point $x\in V$ as follows:
$$
(f_*\alpha')(x)=\sum_{x'\in
f^{-1}(x)}((df_{x'})^\vee)^{-1}(\alpha'(x')) \,.
$$
For an arbitrary generically finite dominant morphism $f$ between smooth $V'$ and
$V$, this formula determines the trace of a rational differential form on a
non-empty open subset in $V$.

If the morphism $f$ is generically finite and proper one can show that the trace
$f_*\,\alpha'$ of a regular differential form $\alpha'$ on $V'$ is regular on
$V$. To prove the latter\footnote{See \cite{Griff} for an analytic approach.}
let us consider an open subset $U\subset V$ such that $f$ is finite over $U$.
One can assume that the complement $V\smallsetminus U$ is of codimension at
least two in $V$. Therefore, it suffices to show that $f_*\,\alpha'$ is regular
in $U$. Without loss of generality, we can assume further that
$f\!:f^{-1}(U)\to U$ is a Galois covering (possibly, ramified). To show this, recall that any finite morphism is dominated by a Galois covering (that is, there exists a Galois covering of $U$ which factors through $f$). By lifting $\alpha'$ to such a Galois covering one gets then an equivalent problem. Therefore, we may simply assume that $f\!:f^{-1}(U)\to U$ is Galois. Consider now the form $f^*f_*\,\alpha'$ on $f^{-1}(U)$. This
form is equal to a sum of Galois conjugates of $\alpha'$. (This is obviously so
over an open subset of $U$, where $f$ is unramified; the equality for the whole
of $U$ then follows.) The regularity of $\alpha'$ implies now the regularity of
$f^*f_*\,\alpha'$. The desired assertion follows by noticing that
$f^*f_*\,\alpha'$ is regular if and only if $f_*\,\alpha'$ is regular (on $U$
and, hence, on $V$).

The next result says that also the logarithmicity of rational differential forms
is preserved by the trace map.

\begin{lemma}\label{lemma-tracelog}
Let $f:V'\to V$ be a proper generically finite surjective morphism between smooth
varieties and suppose $W\subset V$, $W'\subset V'$ are simple normal crossing
divisors such that $W'\subseteq f^{-1}(W)$. Then for any differential form
$\alpha'\in H^0(V',\Omega^p_{V'}(\log W'))$, the rational differential form
$f_*\alpha'$ belongs to $H^0(V,\Omega^p_{V}(\log W))$.
\end{lemma}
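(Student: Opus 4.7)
The plan is to check the inclusion $f_*\alpha'\in H^0(V,\Omega^p_V(\log W))$ locally at each codimension-one point of $V$, exploiting the fact that $\Omega^p_V(\log W)$ is locally free (hence reflexive) on the smooth $V$. By reflexivity, a rational form lies in $H^0(V,\Omega^p_V(\log W))$ iff it lies in the stalk at every codimension-one point of $V$. This reduces the statement to two local assertions: regularity at codimension-one points of $V\smallsetminus W$, and the condition of being locally logarithmic at the generic point $\eta_i$ of each component $W_i$ of $W$. Note that Zariski-locally around $\eta_i$ the divisor $W$ reduces to the single smooth component $W_i$, so the definition recalled in the paper gives the local criterion: one must check that both $f_*\alpha'$ and $d(f_*\alpha')$ have at most first-order poles along $W_i$ at $\eta_i$.

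Regularity of $f_*\alpha'$ on $V\smallsetminus W$ is essentially immediate: the hypothesis $W'\subseteq f^{-1}(W)$ yields regularity of $\alpha'$ on $f^{-1}(V\smallsetminus W)$, and the preceding result on the regularity of traces (together with the removal of a codimension-$\geq 2$ locus where $f$ fails to be finite) gives the claim.

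For the first-order pole estimate at $\eta_i$, I would shrink $V$ to a Zariski open $U$ containing $\eta_i$ and meeting no other component of $W$, on which $f$ is finite and a local equation $t_i\in\OO_V(U)$ of $W_i$ is available. The projection formula $f_*(t_i\alpha')=t_i\, f_*\alpha'$, together with the regularity-of-trace result, reduces the pole bound for $f_*\alpha'$ to the assertion that $t_i\alpha'$ is regular on $f^{-1}(U)$. For the derivative, the same strategy applied to $d\alpha'\in H^0(V',\Omega^{p+1}_{V'}(\log W'))$, combined with the commutation $f_*\circ d=d\circ f_*$ recalled in the paper, handles the first-order pole estimate for $d(f_*\alpha')=f_*(d\alpha')$.

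The principal (and only nontrivial) computation is the regularity of $t_i\alpha'$ at each codimension-one point $\eta'_j$ of $f^{-1}(U)$. If $\eta'_j\notin W'$ then $\alpha'$ is regular at $\eta'_j$ and there is nothing to prove. Otherwise $\eta'_j$ is a generic point of a component of $W'$, which must lie in $f^{-1}(W_i)$ by the choice of $U$, so taking a uniformizer $s_j$ at $\eta'_j$ one has $t_i=u_j s_j^{e_j}$ with $u_j$ a unit and ramification index $e_j\geq 1$. The logarithmic bound $s_j\alpha'\in\Omega^p_{V',\eta'_j}$ then gives $t_i\alpha'=u_j s_j^{e_j-1}(s_j\alpha')$, which is regular. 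Reflexivity of $\Omega^p_{V'}$ on the smooth $V'$ propagates codimension-one regularity to regularity on $f^{-1}(U)$, completing the reduction.
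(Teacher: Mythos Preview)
Your proof is correct and follows essentially the same approach as the paper's: establish regularity of $f_*\alpha'$ on $V\smallsetminus W$ via the preceding result on traces of regular forms under proper morphisms, and then at the generic point of each component $W_i$ use the projection formula $t_i\cdot f_*\alpha'=f_*((f^*t_i)\cdot\alpha')$ together with $f_*\circ d=d\circ f_*$ to reduce the logarithmic condition to the regularity of $(f^*t_i)\alpha'$ and $(f^*t_i)\,d\alpha'$, which follows from the first-order pole bound on $\alpha'$. Your write-up is more explicit than the paper's (the reflexivity reduction to codimension one, the shrinking to an open over which $f$ is finite, and the local computation $t_i\alpha'=u_js_j^{e_j-1}(s_j\alpha')$), but the core argument is identical.
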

\begin{proof}
A restriction of $f$ defines the following proper morphism
$$
V'\smallsetminus f^{-1}(W)\to V\smallsetminus W.
$$
Since $W'\subseteq f^{-1}(W)$ and $f$ is proper, the form $f_*\alpha'$ is regular on
$V\smallsetminus W$. Let $W_i\subset W\subset V$ be an irreducible component of $W$
and choose an equation $t_i$ of $W_i$ at its generic point. Then the forms
\begin{equation}\label{tfalpha}
t_i\cdot f_*\alpha'=f_*((f^*t_i)\cdot\alpha'),
\end{equation}
\begin{equation}\label{tdfalpha}
t_i\cdot d(f_*\alpha')=t_i\cdot f_*(d\alpha')=f_*((f^*t_i)\cdot d\alpha')
\end{equation}
are regular at the generic point of $W_i$. Therefore the forms $f_*\alpha'$ and $d(f_*\alpha')$ have at
most first order poles along~$W_i$, so $f_*\alpha'$ is locally logarithmic along
$W_i$\,, and this completes the proof.
\end{proof}

For the sake of technical arguments below we shall need to know how the
residue map commutes with pull-back and trace morphisms of differential forms.
Let us consider a dominant morphism between smooth varieties, $f:V'\to V$, and
suppose $W$ is a smooth irreducible divisor in $V$ such that $W':=f^{-1}(W)$ is a
simple normal crossing divisor in~$V'$. Let $\{W_i'\}$ be the set of those
irreducible components of~$W'$ which map dominantly to $W$. Denote by $e_i$ the
ramification index of $f$ along~$W'_i$ and denote by $f_i:W'_i\to W$ the restriction
of $f$. Then for any differential form $\alpha\in H^0(V,\Omega^p_{V}(\log W))$, a
local calculation shows that
\begin{equation}\label{equat-pullback}
\res_{V'W'_i}(f^*\alpha)=e_i\cdot f_i^*(\res_{VW}(\alpha)).
\end{equation}
Suppose in addition that $f$ is proper and generically finite. Then for any
differential form $\alpha'\in H^0(V',\Omega^p_{V'}(\log W'))$, the form $f_*\alpha'$ is logarithmic by Lemma~\ref{lemma-tracelog} and we have
\begin{equation}\label{equat-trace}
\res_{VW}(f_*\alpha')=\sum_{i}f_{i*}(\res_{V'W'_i}(\alpha')).
\end{equation}
Indeed, let $t=0$ be an equation of $W$ at its generic point. Then the $1$-form $f^*\left(\frac{dt}{t}\right)$ is locally logarithmic along each $W'_i$ and its residue to $W'_i$ equals $e_i$. There exists a \mbox{$(p-1)$-form} $\beta'$ on $V'$ which is regular at the generic point of each $W'_i$ and such that for each $i$, we have $\res_{V'W_i'}(\alpha')=e_i\cdot \beta'|_{W'_i}\,$. Then the form $\gamma':=\alpha'-f^*\left(\frac{dt}{t}\right)\wedge\beta'$ has no residues to~$W'_i$, whence this form is regular at the generic point of each $W'_i$. It follows that $f_*\gamma'$ is regular at the generic point of $W$ and by the projection formula, it is enough to show that $(f_*\beta')|_W=\sum_i e_if_{i*}(\beta'|_{W'_i})$. There exist $p$-forms $\beta_j$ on $V$ which are regular at the generic point of $W$ and rational functions $h'_j$ on $V'$ which are regular at the generic point of each $W'_i$ such that $\beta'=\sum_j h'_j\cdot f^*\beta_j$ (compare with the definition of the trace map on forms as given above). Using the projection formula again, we reduce the equation on forms to the following equation on functions: $(f_*h)|_W=\sum_i e_if_{i*}(h|_{W_i})$, where $h$ is a rational function on $V'$ which is regular at the generic point of each $W'_i$ (recall that here $f_*$ and $f_{i*}$ are nothing but the trace maps for the corresponding field extensions). The latter equation can be shown either analytically after the reduction to the case $k=\Cb$, or by using complete local fields associated with divisors $W_i$.

Our next technical lemma follows directly from the Bertini theorem (note that the Bertini
theorem is valid over an arbitrary infinite field as any dense open subset in
the projective space has a rational point):

\begin{lemma}\label{lemma-Bertini}
Let $V$ be a smooth quasi-projective variety, $\varphi\!:V\to\P^N$ a regular
morphism, $W\subset V$ a simple normal crossing divisor. Then for a general
hyperplane $H\subset \P^N$, the divisor\,\footnote{By $\varphi^{-1}(H)$ we denote the
set-theoretical preimage of $H$; in particular, $\varphi^{-1}(H)$ is a reduced
divisor.} $\varphi^{-1}(H)$ does not contain any component of $W$, and
$W\cup \varphi^{-1}(H)$ is a simple normal crossing divisor.
\end{lemma}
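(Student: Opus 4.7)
My plan is to apply Bertini's theorem not only to $V$ but to each stratum of the natural stratification defined by $W$. Write $W = W_1 \cup \ldots \cup W_r$ and, for each subset $I \subseteq \{1, \ldots, r\}$, set $W_I := \bigcap_{i \in I} W_i$ with the convention $W_\emptyset := V$; by the SNC hypothesis, every nonempty $W_I$ is smooth. For each such $I$ I invoke Bertini's theorem for the restricted morphism $\varphi|_{W_I}\colon W_I \to \P^N$: for a general hyperplane $H \subset \P^N$, either $\overline{\varphi(W_I)} \subseteq H$ or $\varphi^{-1}(H) \cap W_I$ is a smooth divisor in $W_I$. The set of $H$ containing $\overline{\varphi(W_I)}$ is a proper linear subspace of the dual projective space $(\P^N)^\vee$ whenever $W_I$ is nonempty, so the union over the finitely many nonempty $W_I$ of these bad loci is a proper closed subset of $(\P^N)^\vee$. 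I choose $H$ outside this union (using that $k$ is infinite to guarantee a $k$-rational such $H$).

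For such an $H$, specializing to $I = \{i\}$ gives $\varphi^{-1}(H) \cap W_i \subsetneq W_i$, verifying the first conclusion. For the SNC property of $W \cup \varphi^{-1}(H)$, at any point $x \in \varphi^{-1}(H)$ I set $I := \{i : x \in W_i\}$ and fix local equations $t_1, \ldots, t_r$ of the corresponding $W_i$'s at $x$ together with a linear form $\ell$ defining $H$; the SNC condition at $x$ amounts to $d(\varphi^*\ell)_x$ being linearly independent of $dt_1, \ldots, dt_r$ in $\m_x/\m_x^2$, which is precisely the assertion that $\varphi^{-1}(H) \cap W_I$ is smooth at $x$ as a divisor in $W_I$---exactly what our choice of $H$ secures. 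Smoothness (hence reducedness) of $\varphi^{-1}(H)$ itself, needed to talk about $\varphi^{-1}(H)$ as a reduced divisor with smooth components, is the case $I = \emptyset$.

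The one point needing care is this translation of the SNC condition into smoothness along the stratum $W_I$; once that is seen, the lemma is little more than an application of Bertini to each of the finitely many smooth $W_I$'s combined with the observation that the locus of hyperplanes containing a fixed nonempty closed subset of $\P^N$ is a proper subvariety of $(\P^N)^\vee$.
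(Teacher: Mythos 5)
Your argument is correct and is precisely the standard stratum-by-stratum application of Bertini that the paper leaves implicit: the authors simply assert that the lemma ``follows directly from the Bertini theorem,'' noting only (as you do) that infiniteness of $k$ supplies a rational point in the dense open locus of good hyperplanes, and your translation of the SNC condition at $x$ into smoothness of $\varphi^{-1}(H)\cap W_I$ as a divisor in the stratum $W_I$ is the right way to make that assertion rigorous. The only cosmetic refinement is to run the Bertini and linear-subspace arguments over the irreducible components of each $W_I$ rather than over $W_I$ itself, since for $|I|\geqslant 2$ the intersection $W_I$ may be reducible and a hyperplane could a priori contain the image of one component without containing all of $\overline{\varphi(W_I)}$; this changes nothing, as there are still only finitely many bad proper closed subsets of $(\P^N)^\vee$ to avoid.
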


We will also need the following form of the Hironaka theorem (see
\cite{Hir}):

\begin{prop}\label{prop-Hironaka}
\hspace{0cm}
\begin{itemize}
\item[(i)]
For an irreducible variety $V$ and a subvariety $Z\subset V$, there exists a proper
birational morphism\footnote{By a birational morphism we mean a regular morphism
between irreducible varieties that has a rational inverse.} $\pi:\widetilde{V}\to V$
such that $\widetilde{V}$ is smooth, $\pi^{-1}(Z)$ is a simple normal crossing
divisor in $\widetilde{V}$, and the morphism $\pi$ is an isomorphism over the
complement \mbox{$V\smallsetminus (V_{sing}\cup Z)$}, where $V_{sing}$ is the
singular locus of $V$.
\item[(ii)]
Let $\varphi:V_1\dasharrow V_2$ be a birational map between complete irreducible
varieties. Then there is a composition of blow-ups at smooth centers $f:V'_1\to V_1$
and a regular morphism $\varphi':V'_1\to V_2$ such that $\varphi\circ f=\varphi'$.
\end{itemize}
\end{prop}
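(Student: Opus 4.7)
The statement is quoted as a form of the Hironaka theorem, so the plan is not to reconstruct resolution of singularities but to indicate how the two precise items follow from the standard characteristic-zero theorems of Hironaka cited in \cite{Hir}. Both statements are really consequences of (a) desingularization of a variety by blow-ups at smooth centers in the singular locus, and (b) embedded resolution / principalization of an ideal sheaf by blow-ups at smooth centers contained in the successive total transforms.

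For part (i), I would first apply desingularization to $V$ itself, obtaining a proper birational morphism $\pi_1: V_1\to V$ which is a composition of blow-ups at smooth centers sitting over $V_{\sing}$; in particular $V_1$ is smooth and $\pi_1$ is an isomorphism over $V\smallsetminus V_{\sing}$. I would then apply embedded resolution to the ideal sheaf of $\pi_1^{-1}(Z)\subset V_1$: Hironaka's theorem gives a further sequence $\pi_2:\widetilde V\to V_1$ of blow-ups at smooth centers contained in the successive total transforms of $\pi_1^{-1}(Z)$ such that $(\pi_1\circ\pi_2)^{-1}(Z)$ is a simple normal crossing divisor in $\widetilde V$. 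Because the centers of $\pi_2$ all lie over $Z$, the composition $\pi=\pi_1\circ\pi_2$ is an isomorphism over $V\smallsetminus(V_{\sing}\cup Z)$, which is exactly the assertion.

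For part (ii), I would pass to the closure $\Gamma\subset V_1\times V_2$ of the graph of $\varphi$, which is complete and irreducible, and consider the first projection $p_1:\Gamma\to V_1$. This is proper and birational, and the second projection $p_2:\Gamma\to V_2$ is regular, so once we replace $\Gamma$ by something of the form $V'_1\to V_1$ given by a composition of blow-ups at smooth centers in $V_1$, the map $\varphi'$ will just be $p_2$ composed with the induced isomorphism. The reduction from $p_1:\Gamma\to V_1$ to a chain of blow-ups of smooth centers is precisely Hironaka's flattening / elimination-of-indeterminacy result: apply resolution to $\Gamma$ (to reach a smooth source), then apply the principalization theorem to the ideal sheaf on $V_1$ that pulls back to the exceptional divisor of $p_1$, and use the universal property of blowing up to factor the resulting morphism through the required tower of smooth blow-ups on $V_1$.

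The main obstacle in both parts is the insistence on blow-ups at \emph{smooth} centers rather than an arbitrary proper birational modification; this is exactly the substantive content of Hironaka's theorem and the reason characteristic zero is required. Given that result, the two assertions are routine repackagings, which is why the authors simply cite \cite{Hir}.
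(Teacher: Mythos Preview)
The paper gives no proof of this proposition at all: it is introduced with ``We will also need the following form of the Hironaka theorem (see \cite{Hir})'' and simply stated, with no \texttt{proof} environment following. Your proposal is therefore consistent with the paper's ``proof'' (namely, a bare citation), and in fact supplies more than the authors do by indicating how each item is extracted from the standard desingularization and principalization statements.

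One small point on your sketch of~(ii): your phrasing suggests replacing $\Gamma$ itself by a tower of smooth blow-ups of $V_1$, but $\Gamma$ need not be isomorphic to such a tower. What one actually obtains from principalization is a composition of smooth blow-ups $f:V'_1\to V_1$ that \emph{dominates} $\Gamma$, i.e.\ such that the birational map $p_1^{-1}\circ f:V'_1\dasharrow\Gamma$ is regular; then $\varphi':=p_2\circ(p_1^{-1}\circ f)$ is the required morphism. Your final sentence essentially says this, but the earlier line ``$\varphi'$ will just be $p_2$ composed with the induced isomorphism'' overstates things. With that adjustment your outline is correct.
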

Throughout this article we repeatedly encounter one and the same need of
resolving a rational map. For the sake of further reference, we formulate this
simple corollary of the Hironaka theorem as a separate lemma.
\begin{lemma}\label{lemma-resolmorph}
Let $f:V\to X$ and $f':V'\to X$ be proper morphisms from irreducible varieties
to a variety $X$. Let $g:V'\dasharrow V$ be a rational map such that
$f\circ g=f'$. Then, there exist a smooth irreducible variety $V''$ and proper
morphisms $h:V''\to V$, $h':V''\to V'$ such that $h'$ is birational and $g\circ h'=h$.
\end{lemma}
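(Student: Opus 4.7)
The plan is to resolve the indeterminacy of $g$ by taking its graph closure inside the fiber product $V'\times_X V$, and then to smoothen by applying Hironaka.

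First, let $U\subseteq V'$ be the open dense subset on which $g$ is defined as a regular morphism. Consider the graph
$\Gamma_g\subset U\times V$, which is an irreducible closed subvariety mapping isomorphically to $U$ via the first projection. Since $f\circ g=f'$ on $U$, the graph $\Gamma_g$ is actually contained in the fiber product $U\times_X V\subset V'\times_X V$. Let $\Gamma$ be the closure of $\Gamma_g$ in $V'\times_X V$; it is irreducible.

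Next I would check properness. The key reason to work in $V'\times_X V$ rather than $V'\times V$ is that $V$ is only proper over $X$, not over $k$. The projections $V'\times_X V\to V'$ and $V'\times_X V\to V$ are proper as base changes of $f\!:V\to X$ and $f'\!:V'\to X$ respectively. Their restrictions to the closed subvariety $\Gamma$ yield proper morphisms $p_1\!:\Gamma\to V'$ and $p_2\!:\Gamma\to V$, and $p_1$ is birational because it restricts to an isomorphism $\Gamma_g\toiso U$.

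Then I apply Proposition~\ref{prop-Hironaka}(i) to $\Gamma$ (with any proper closed subvariety, e.g., the singular locus): there exists a smooth irreducible variety $V''$ together with a proper birational morphism $\pi\!:V''\to\Gamma$. Set $h':=p_1\circ\pi\!:V''\to V'$ and $h:=p_2\circ\pi\!:V''\to V$. Both are proper as compositions of proper maps, and $h'$ is birational as a composition of two birational proper morphisms.

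Finally I would verify that $g\circ h'=h$ as rational maps. Over the open subset $\pi^{-1}(\Gamma_g)\subset V''$, the map $\pi$ followed by $p_1$ lands in $U$ where $g$ is a morphism; and by construction $p_2|_{\Gamma_g}=g\circ p_1|_{\Gamma_g}$, so $h=p_2\circ\pi=g\circ p_1\circ\pi=g\circ h'$ on this dense open subset, which suffices. This is a routine check, and indeed the only mild subtlety in the argument is the properness discussion above; no real obstacle is expected.
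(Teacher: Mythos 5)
Your proof is correct and follows essentially the same route as the paper: take the closure of the graph of $g|_U$ inside the fiber product $V'\times_X V$, observe that the two projections are proper (as base changes of the proper morphisms $f$ and $f'$), and then resolve singularities to obtain $V''$. The properness discussion and the final verification of $g\circ h'=h$ are correctly handled.
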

In other words, for $V$ and $V'$ proper over $X$, a rational map $g:V'\dasharrow V$
over $X$  can be turned into a proper regular morphism by replacing $V'$ with a
suitable birational smooth variety $V''$, which is summarized in the following
commutative diagram:
\begin{equation}\nonumber
\begin{array} {c}

    ~~V''  \\
h' \swarrow ~~~ \searrow h \\
V' ~~~ \stackrel{g}{\dashrightarrow} ~~~ V \\
f' \searrow ~~~ \swarrow f \\
    X
\end{array}
\end{equation}

\begin{proof}
Let $U\subset V'$ be a non-empty open subset such that $g_U:=g|_U$ is regular on $U$.
Consider the embedding of the graph of $g_U$ followed by an open embedding as follows:
$$
\Gamma_{g_U}\subset U\times_X V\subset V'\times_X V.
$$
Take the closure $\bar{\Gamma}_{g_U}$ of $\Gamma_{g_U}$ in $V'\times_X V$. Note
that $\bar{\Gamma}_{g_U}$ contains $\Gamma_{g_U}\cong U$ as an open dense subset
and the projections to the two factors in  $V'\times_X V$ define proper
morphisms from $\bar{\Gamma}_{g_U}$ to $V'$ and $V$. As a smooth $V''$ we can
now pick up a resolution of singularities of~$\bar{\Gamma}_{g_U}$.
\end{proof}

The next result can be considered as a logarithmic version of the
\mbox{Grauert--Riemenschneider} theorem \cite{GR} (in other words, of the assertion
that smooth varieties have rational singularities). Since the authors could not find
any reference for this case, the proof is given.
For a morphism
$\pi\!:\widetilde{X}\to X$ between smooth irreducible varieties of equal dimensions
and a locally free sheaf $\Gc$ on $X$,
we set:
\begin{equation}\label{equat-factorial}
\pi^!\Gc:=\omega_{\widetilde{X}}
\otimes_{\OO_{\widetilde{X}}}\pi^*(\omega_X^{-1}\otimes_{\OO_X}\Gc).
\end{equation}
We will use that $\pi^!$ extends to a functor between derived categories of coherent
sheaves, which is right adjoint to the functor $R\pi_*$ by the Grothendieck duality,
provided that $\pi$ is proper (see \cite{Har}).

\begin{prop}\label{prop-logrational}
Let $\pi\!:\widetilde{X}\to X$ be a proper birational morphism between smooth
irreducible quasi-projective varieties. Let $D\subset X$ and
$\widetilde{D}:=\pi^{-1}(D)$ be simple normal crossing divisors. Then, we have that
$$
R\pi_*\omega_{\widetilde{X}}(\widetilde{D})=\omega_{X}(D) \,.
$$
\end{prop}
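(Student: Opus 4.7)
The plan is to reduce the statement, via Grothendieck duality, to a dual identity about ideal sheaves, and then reduce further via short exact sequences to a Du Bois-type identity for the SNC divisors that can be verified by a Mayer--Vietoris computation.

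First, I would apply Grothendieck duality to the proper morphism $\pi$. Using $\pi^!\omega_X=\omega_{\widetilde X}$ from~\eqref{equat-factorial} and the line-bundle identification $\omega_{\widetilde X}(\widetilde D)=\RHom_{\widetilde X}(\OO_{\widetilde X}(-\widetilde D),\omega_{\widetilde X})$, duality gives
\[
R\pi_*\omega_{\widetilde X}(\widetilde D)\;=\;\RHom_X\bigl(R\pi_*\OO_{\widetilde X}(-\widetilde D),\omega_X\bigr).
\]
Since similarly $\omega_X(D)=\RHom_X(\OO_X(-D),\omega_X)$, the proposition would follow from the dual identity $R\pi_*\OO_{\widetilde X}(-\widetilde D)=\OO_X(-D)$. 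Combining the short exact sequences $0\to\OO_{\widetilde X}(-\widetilde D)\to\OO_{\widetilde X}\to\OO_{\widetilde D}\to 0$ on $\widetilde X$ and its analogue on $X$, together with the classical fact that smooth varieties have rational singularities, $R\pi_*\OO_{\widetilde X}=\OO_X$ (equivalent, via Grothendieck duality, to the non-logarithmic Grauert--Riemenschneider statement $R\pi_*\omega_{\widetilde X}=\omega_X$), reduces the problem to the auxiliary identity
\[
R\bar{\pi}_*\OO_{\widetilde D}\;=\;\OO_D,\qquad \bar{\pi}:=\pi|_{\widetilde D}\colon\widetilde D\to D.
\]

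The main obstacle is this auxiliary identity, a Du Bois-type assertion that $\bar{\pi}$ is a well-behaved ``log resolution'' between the SNC varieties $\widetilde D$ and $D$. Writing $D=\bigcup_i D_i$ and $\widetilde D=\bigcup_\alpha \widetilde D_\alpha$ as SNC unions of their smooth components, $\OO_D$ and $\OO_{\widetilde D}$ are quasi-isomorphic to Mayer--Vietoris (\v{C}ech) complexes built from the structure sheaves of the multi-index intersections $D_I$ and $\widetilde D_J$. Applying $R\bar{\pi}_*$ termwise: on each smooth stratum $\widetilde D_J$, $\bar{\pi}$ restricts to a proper morphism of smooth varieties, either generically birational onto a stratum of $D$ (where rational singularities for smooth targets gives $R\bar{\pi}_*\OO=\OO$) or contracting onto a lower-dimensional stratum of $D$ (where higher cohomology vanishes fiberwise on the projective fibers). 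Reassembling via the associated spectral sequence, and tracking carefully how the strata of $\widetilde D$ match with those of $D$---possibly with an induction on $\dim X$ to control the combinatorics---yields the identity.
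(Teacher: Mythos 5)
Your duality reduction is sound: sheafified Grothendieck duality together with $\pi^!\omega_X=\omega_{\widetilde X}$ does convert the statement into $R\pi_*\OO_{\widetilde X}(-\widetilde D)=\OO_X(-D)$, and comparing the two exact triangles coming from $0\to\OO_{\widetilde X}(-\widetilde D)\to\OO_{\widetilde X}\to\OO_{\widetilde D}\to0$ and its analogue on $X$, together with the classical $R\pi_*\OO_{\widetilde X}=\OO_X$, legitimately reduces everything to $R\bar\pi_*\OO_{\widetilde D}=\OO_D$. That identity is in fact true (it is precisely the Du Bois property of simple normal crossing schemes, in the form of Schwede's criterion), so your strategy can be made to work. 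The genuine gap is in your proposed proof of this last identity. First, the claim that a stratum $\widetilde D_J$ contracted onto a lower-dimensional subvariety has vanishing higher direct images because ``higher cohomology vanishes fiberwise on the projective fibers'' is unjustified: the fibers here are closed strata of fibers of a birational morphism, not varieties for which $H^{>0}(\OO)=0$ is automatic, and this point needs a real argument. Second, and more seriously, the terms of the two Mayer--Vietoris complexes do not match stratum-by-stratum, so the spectral sequence does not degenerate for the naive reason you indicate. Take $X=\Ab^3$, $D=D_1\cup D_2$ two planes meeting along a line $\ell$, and $\pi$ the blow-up of $\ell$: then the strict transforms satisfy $\widetilde D_1\cap\widetilde D_2=\varnothing$, so no stratum of $\widetilde D$ maps birationally onto the stratum $D_1\cap D_2=\ell$ in the expected cohomological degree; instead $\OO_\ell$ is recovered only after cancellation among the contributions of $E$, $\widetilde D_1\cap E$ and $\widetilde D_2\cap E$. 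The ``careful tracking, possibly with an induction'' that you defer is therefore the entire content of the final step, not a routine verification.

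For comparison, the paper's proof sidesteps all of this: it uses Hironaka to dominate $\pi$ by a morphism $f\!:Y\to X$ which is a composition of blow-ups at smooth centers, verifies $Rf_*\omega_Y(f^{-1}(D))=\omega_X(D)$ by a direct computation for such blow-ups, and then uses a pull-back/trace splitting to exhibit $R\pi_*\omega_{\widetilde X}(\widetilde D)$ as a direct summand of the invertible sheaf $\omega_X(D)$, which forces equality. If you wish to keep your dual formulation, the cleanest repair of the last step is to import that same direct-summand trick (or to invoke the Du Bois property of SNC schemes as a citation) rather than to fight the Mayer--Vietoris combinatorics directly.
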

\begin{proof}
The Hironaka theorem (see Proposition~\ref{prop-Hironaka}) implies the existence of a
smooth projective closure of all the data in the proposition. The statement of the
proposition is local along $X$, while the fibers of the proper morphism $\pi$ over
$X$ are not changing under taking a projective closure. Therefore, we may assume that
$\widetilde{X}$ and $X$ are projective.

By Proposition \ref{prop-Hironaka}~$(ii)$ applied to the birational map
$\varphi:=\pi^{-1}$, there is a smooth variety $Y$ together with regular birational
proper morphisms $f\!:Y\to X$ and $\widetilde{f}\!:Y\to \widetilde{X}$ such that
$f=\pi\circ\widetilde{f}$ and $f$ is a composition of blow-ups at smooth centers. We
set $C:=f^{-1}(D)$, $C':=\widetilde{f}^*(\widetilde{D})$. By definition of
$\widetilde{f}^{~!}$, we have that
$\widetilde{f}^{~!\;}\omega_{\widetilde{X}}(\widetilde{D})=\omega_Y(C')$. Now, since
$\widetilde{f}^{~!}$ is right-adjoint to $R\widetilde{f}_*$, there is a trace morphism
$$
R\widetilde{f}_*\omega_Y(C')\to \omega_{\widetilde{X}}(\widetilde{D}) \,.
$$
Equation~\eqref{eq-logexpl} implies the existence of the pull-back map on differential
forms, \mbox{$\omega_{\widetilde{X}}(\widetilde{D})\to \widetilde{f}_*\omega_Y(C)$},
because $\widetilde{D}$ is a simple normal crossing divisor. The composition of
this pull-back map with the natural morphisms
$\widetilde{f}_*\omega_Y(C)\to\widetilde{f}_*\omega_Y(C')$, for $C\leqslant C'$,
and $\widetilde{f}_*\omega_Y(C')\to R\widetilde{f}_*\omega_Y(C')$, followed by
the above trace morphism gives us, in particular, a sequence
$$
\omega_{\widetilde{X}}(\widetilde{D})\to R\widetilde{f}_*\omega_Y(C)\to
\omega_{\widetilde{X}}(\widetilde{D})\,,
$$
which is an identity on $\omega_{\widetilde{X}}(\widetilde{D})$ (it is enough to
check this on an open subset in $\widetilde{X}$ over which $\widetilde{f}$ is an
isomorphism). Thus, there is a splitting
$$
R\widetilde{f}_*\omega_Y(C)\cong\omega_{\widetilde{X}}(\widetilde{D})\oplus
N
$$
for a certain object $N$ in the derived category of coherent sheaves
on $\widetilde{X}$. Applying further~$R\pi_*$, we obtain a splitting
$$
Rf_*\omega_Y(C)\cong
R\pi_*\omega_{\widetilde{X}}(\widetilde{D})\oplus R\pi_*N  \,.
$$
On the other hand, we claim that $Rf_*\omega_Y(C)=\omega_X(D)$. To prove this we may
assume that $f$ is a blow-up at a smooth center $Z\subset X$. By the projection
formula, it is enough to consider only those components of $D$ which contain $Z$,
because for the other components, their set-theoretical preimages coincide with their
pull-backs. Then, a direct calculation gives the required result.

Thus, we see that $R\pi_*\omega_{\widetilde{X}}(\widetilde{D})$ is a direct summand
in $\omega_X(D)$, which proves immediate\-ly the proposition, because an invertible
sheaf on a connected variety has no non-trivial direct summands.
\end{proof}

The following corollary will be useful for our purposes.

\begin{corol}\label{corol-directimage}
Let $\pi\!:\widetilde{X}\to X$ be a proper birational morphism between smooth
quasi-projective varieties. Let $D\subset X$ and $\widetilde{D}:=\pi^{-1}(D)$ be
simple normal crossing divisors. Then, for any locally free sheaf $\Gc$ on $X$, and
for any $q\geqslant 0$, we have that
$$
H^{q}(\widetilde{X},\pi^!\Gc(\widetilde{D}))=
H^{q}(X,\Gc(D)).
$$
\end{corol}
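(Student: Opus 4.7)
The plan is to deduce this as a direct consequence of Proposition~\ref{prop-logrational} via the projection formula and the Leray spectral sequence, with essentially no new geometric input.

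First I would unwind the definition of $\pi^!$ from equation~\eqref{equat-factorial}: since $\pi^!\Gc = \omega_{\widetilde{X}}\otimes_{\OO_{\widetilde{X}}}\pi^*(\omega_X^{-1}\otimes_{\OO_X}\Gc)$, twisting by $\widetilde{D}$ gives
$$
\pi^!\Gc(\widetilde{D}) \;=\; \omega_{\widetilde{X}}(\widetilde{D})\otimes_{\OO_{\widetilde{X}}}\pi^*\bigl(\omega_X^{-1}\otimes_{\OO_X}\Gc\bigr) \,.
$$
The key point is that $\omega_X^{-1}\otimes_{\OO_X}\Gc$ is a locally free sheaf on $X$, so the ordinary projection formula applies to its pull-back.

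Next I would apply $R\pi_*$ and use the projection formula together with Proposition~\ref{prop-logrational}:
$$
R\pi_*\bigl(\pi^!\Gc(\widetilde{D})\bigr) \;\cong\; R\pi_*\omega_{\widetilde{X}}(\widetilde{D})\otimes_{\OO_X}\bigl(\omega_X^{-1}\otimes_{\OO_X}\Gc\bigr) \;\cong\; \omega_X(D)\otimes_{\OO_X}\omega_X^{-1}\otimes_{\OO_X}\Gc \;\cong\; \Gc(D) \,.
$$
Here the middle isomorphism uses exactly the statement $R\pi_*\omega_{\widetilde{X}}(\widetilde{D})=\omega_X(D)$ of Proposition~\ref{prop-logrational}, and there is no derived-tensor subtlety because $\omega_X^{-1}\otimes_{\OO_X}\Gc$ is locally free.

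Finally, the Leray spectral sequence (or simply the fact that cohomology of a complex of sheaves on $\widetilde{X}$ equals the hypercohomology of its derived direct image on $X$) yields
$$
H^q\bigl(\widetilde{X},\pi^!\Gc(\widetilde{D})\bigr) \;=\; H^q\bigl(X, R\pi_*(\pi^!\Gc(\widetilde{D}))\bigr) \;=\; H^q\bigl(X,\Gc(D)\bigr) \,,
$$
which is the desired identification. The only potential obstacle is justifying the projection formula in the derived-category form that we used; this is standard for $R\pi_*$ of a proper morphism with a locally free twist, so no serious difficulty arises.
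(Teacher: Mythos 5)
Your proof is correct and is exactly the argument the paper intends (the corollary is stated without proof precisely because it follows from Proposition~\ref{prop-logrational} by the projection formula applied to the locally free twist $\pi^*(\omega_X^{-1}\otimes_{\OO_X}\Gc)$, followed by the Leray spectral sequence). No gaps.
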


\section{Statement of the main result}\label{section-mainresult}

\subsection{The polar complex}\label{subsection-main}

Let $Z$ be an irreducible variety and $\E$ a locally free sheaf on $Z$. We are going
to define a distinguished subset in $\omega_K\otimes_K\E_K$ where $K:=k(Z)$ and
$\E_K$ denotes the space of rational sections of $\E$. In particular, its elements
will be allowed to have poles of the first order at most.

\begin{defin}\label{defin-polarformsubvar}
An element $\alpha\in \omega_K\otimes_K\E_K$ is called {\it polar} if there exist a
smooth variety~$V$, a proper birational morphism $f\!:V\to Z$, a simple normal
crossing divisor $W\subset V$, and a section
$$
\alpha_V\in H^0(V,\omega_V(W)\otimes_{\OO_V}f^*\E)
$$
such that the restriction of $\alpha_V$ to the generic point of $V$ equals $f^*\alpha$.
We shall use the notation
$$
\Pol_Z(\E) = \{{\rm
the~set~of~all~polar~elements~in~}\omega_K\otimes_K\E_K \,\}.
$$
\end{defin}

\begin{rmk}\label{rmk-polarcheck}
It follows from the Hironaka theorem that, for $Z$ and $\E$ as in
Definition~\ref{defin-polarformsubvar}, the set $\Pol_Z(\E)$ is a $k$-vector subspace
in \mbox{$\omega_K\otimes_K\E_K$}. For more details see
Remark~\ref{rmk-polarcyclesubvar} and Proposition~\ref{prop-polarforms}$(i)$ below.
\end{rmk}

For an irreducible reduced divisor $B$ on $Z$, there is a canonical {\it
residue homomorphism}
$$
\res_{ZB}\!:\Pol_Z(\E)\to\Pol_B(\E|_B)
$$
induced by taking residues of logarithmic forms. This homomorphism is defined as
follows. First, if $Z$ is normal, the generic point $x$ in $B$ is smooth both in
$B$ and $Z$ and any form $\alpha\in\Pol_Z(\E)$ has at most first order pole
along $B$ locally at $x$. In this case, let us define $\res_{ZB}(\alpha)$
as the usual residue of $\alpha$ at $B$.
If $Z$ is not normal, we take the normalization $\pi\!:\widetilde Z\to Z$ and define
$\res_{ZB}(\alpha)$ as the sum
$\sum_i \pi_*(\res_{\widetilde{Z}\widetilde{B}_i}(\pi^*\alpha))$,
where~$\widetilde{B}_i$ runs through irreducible components in the preimage of
$B$ in $\widetilde Z$.
By construction, $\res_{ZB}(\alpha)$ is a rational form on $B$ with coefficients
in $\E|_B$, but, in fact, $\res_{ZB}(\alpha)$ is in $\Pol_B(\E|_B)$. By
Remark~\ref{rmk-polarcyclesubvar}, this is a particular case of
Proposition~\ref{prop-polarforms}$(vi)$ proved below.

Let now $X$ be a variety and choose a locally free sheaf $\F$ on $X$. Taking
residue homomorphisms for all possible pairs $B\subset Z$ of irreducible
subvarieties in $X$, we get a homomorphism
$$
\mbox{$\partial:\bigoplus\limits_{Z\in
X_{(p)}}\Pol_Z(\F|_Z)\;\to\bigoplus \limits_{B\in
X_{(p-1)}}\Pol_B(\F|_B)$},
$$
where $X_{(p)}$ denotes the set of irreducible subvarieties in $X$ of dimension
$p$. For short, put
$$
\mbox{$\Pol_p(X,\F):=\bigoplus\limits_{Z\in X_{(p)}}\Pol_Z(\F|_Z)$}\,.
$$

We have that $\partial^2=0$. This statement is Zariski local on $X$,
so, it is enough to consider the case $\F=\OO_X$. When, in addition, $X$ is a
smooth complex projective variety, the groups $\Pol_p(X,\OO_X)$ and the maps
$\partial$ were introduced and studied in~\cite{KR}, where these groups were
denoted by ${\mathcal C}_p$, see Definitions~3.2,~3.7 in~\cite{KR}. Actually, the
condition $k={\mathbb C}$ does not play any role and for an arbitrary $X$
(possibly, non-smooth and non-projective) the only difference in Definition~3.2
from \cite{KR} that should be made is not to assume that $A$ is projective and to
assume that $f$ is proper in notation therein. With these changes,~${\mathcal
C}_p$ agrees with our group $\Pol_p(X,\OO_X)$ for any $X$. Theorem~3.9 in~\cite{KR}
claims that $\partial^2=0$. The proof of this theorem remains correct
after the above changes, thus, we finally get that $\partial^2=0$ in our case.

\begin{defin}\label{defin-polarcomplex}
A {\it polar complex} of a locally free sheaf $\F$ on a variety
$X$ is the complex $\Pol_{\,\bullet}(X,\F)$
with the differential $\partial$. For a chain $\gamma=\oplus\gamma_Z\in\Pol_p(X,\F)$, with
\mbox{$\gamma_Z\in\Pol_Z(\F|_Z)$}, denote by $\supp\,\gamma$ its {\it support}, that is, the
set of irreducible subvarieties \mbox{$Z\in X_{(p)}$} such that $\gamma_Z\ne 0$.
\end{defin}

The main result of the paper is the following statement.

\begin{theor}\label{theor-main}
Let $X$ be a smooth irreducible quasi-projective variety of
dimension $d$ and suppose $\F$ is a locally free sheaf on $X$. Then,
there is a canonical isomorphism
$$
H^{d-p}(X,\omega_X\otimes_{\OO_X}\F)\cong H_{p}(\Pol_{\,\bullet}(X,\F)) \,.
$$
\end{theor}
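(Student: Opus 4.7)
The plan is to sheafify the polar complex and decompose the theorem into two intermediate assertions about a polar subsheaf, as indicated in the introduction. First, define a complex of sheaves $\Pol_\bullet^{\sim}$ on $X$ by the presheaf rule
\[
\Pol_p^{\sim}(U) := \bigoplus_{Z \in X_{(p)},\, Z \cap U \neq \emptyset} \Pol_{Z \cap U}(\F|_{Z \cap U}),
\]
so that $\Gamma(X, \Pol_\bullet^{\sim}) = \Pol_\bullet(X, \F)$, with differential $\partial$. Let $\Pc(\F) := \Pol_d^{\sim}$ be the \emph{polar subsheaf}; it contains $\omega_X \otimes_{\OO_X} \F$ since regular top-forms are polar. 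The theorem will then follow by combining two statements: \textbf{(R)} the complex $\Pol_\bullet^{\sim}$ is locally exact in positions $<d$, hence a resolution of $\Pc(\F)$ (Theorem \ref{theor-polarresol}); and \textbf{(Q)} the natural inclusion $\omega_X \otimes_{\OO_X} \F \hookrightarrow \Pc(\F)$ induces isomorphisms on sheaf cohomology (Theorem \ref{theor-polarquasiis}). Granted these, and granted acyclicity of each $\Pol_p^{\sim}$ for $p<d$ (they are direct sums of sheaves pushed forward from lower-dimensional subvarieties, with the Cousin-style acyclicity typical of such sums), the hypercohomology spectral sequence collapses to give
\[
H^{d-p}(X, \omega_X \otimes_{\OO_X} \F) \;\stackrel{\mathbf{(Q)}}{\cong}\; H^{d-p}(X, \Pc(\F)) \;\stackrel{\mathbf{(R)}}{\cong}\; H^{d-p}(\Gamma(X, \Pol_\bullet^{\sim})) \;=\; H_p(\Pol_\bullet(X, \F)).
\]

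For \textbf{(Q)}, I would apply Corollary \ref{corol-directimage}. Every local section of $\Pc(\F)$ over an open $U \subset X$ is realised, on some proper birational $f: V \to U$ with snc exceptional divisor $W$, as a section of $\omega_V(W) \otimes_{\OO_V} f^*\F = f^{\,!}(\omega_U \otimes_{\OO_U} \F)(W)$. The corollary then computes $H^*(V, \omega_V(W) \otimes_{\OO_V} f^*\F)$ in terms of $H^*(U, \omega_U \otimes_{\OO_U} \F(D))$ for the image divisor $D \subset U$. Taking the directed colimit over all such covers and then controlling the pole divisors by further snc blow-ups inside $X$, one realises $\Pc(\F)$ as a filtered colimit of logarithmic extensions whose sheaf cohomology is already computed by $\omega_X \otimes_{\OO_X} \F$ alone.

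For \textbf{(R)}, I would proceed first for projective $X$ by Quillen's trick --- the algebraic substitute for a tubular neighbourhood. Given a closed polar $p$-chain $\gamma$ supported near a point $x \in X$, one fibres the local situation over an affine base and deforms along $\Ab^1$ to construct a polar $(p+1)$-chain $\beta$ with $\partial \beta = \gamma$. The pullback and trace formulas \eqref{equat-pullback}, \eqref{equat-trace} together with Lemmas \ref{lemma-pullbacklog} and \ref{lemma-tracelog} guarantee that the deformed form stays logarithmic along the snc divisor produced by the deformation, so that $\beta$ is genuinely polar. To pass from projective to quasi-projective $X$, I would use Hironaka (Proposition \ref{prop-Hironaka}) to embed $X$ into a smooth projective $\bar{X}$ with snc boundary, extend $\F$ to a locally free sheaf on $\bar{X}$, and transfer both \textbf{(R)} and \textbf{(Q)} from $\bar{X}$ to $X$ by an excision-type argument adapted to logarithmic forms (polar chains on $X$ are polar chains on $\bar{X}$ modulo those supported at infinity).

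The main obstacle is \textbf{(R)}: running Quillen's trick inside the polar complex. While the trick is standard for Gersten-type complexes in $K$-theory and Chow theory, here one must carry the snc structure along through the $\Ab^1$-deformation and control its effect on residues, so that the primitive produced is not merely a rational form but a logarithmic one --- a verification which will absorb most of the work and which requires the preparatory material of Section \ref{sect-prelim} (notably Proposition \ref{prop-logrational} for its use in handling the direct images that appear when closing up the deformation).
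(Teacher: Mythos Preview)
Your overall two-step scheme matches the paper's, but several of the pieces are set up incorrectly, and one key mechanism is missing.

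\textbf{Sheafification and the polar sheaf.} Your presheaf $\Pol_p^{\sim}(U)=\bigoplus_{Z\cap U\neq\varnothing}\Pol_{Z\cap U}(\F|_{Z\cap U})$ is not what one wants: the paper takes $\underline{\Pol}_p(X,\F)(U)=\bigoplus_{Z\cap U\neq\varnothing}\Pol_Z(\F|_Z)$, with the summand \emph{unchanged} when $U$ shrinks. That is what makes the sheaves flabby and hence acyclic; your version is not flabby (shrinking $U$ enlarges the space of polar forms on $Z\cap U$), so the ``Cousin-style acyclicity'' you invoke does not apply. Moreover, your polar sheaf $\Pc(\F):=\Pol_d^{\sim}$ is both the wrong object and sits on the wrong side: one needs $\Gc_{\pol}:=\Ker(\underline{\Pol}_d\to\underline{\Pol}_{d-1})$ for $\Gc=\omega_X\otimes\F$, and this is a \emph{subsheaf} of $\Gc$ (sections regular on $U$ with first-order poles along $X\smallsetminus U$), not a sheaf containing $\Gc$. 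With your definition, already $H^0(X,\Pc(\F))=\Pol_X(\F)$ strictly contains $H^0(X,\omega_X\otimes\F)$, so \textbf{(Q)} fails as stated.

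\textbf{Step (Q).} Once the correct inclusion $\Gc_{\pol}\hookrightarrow\Gc$ is in place, your colimit sketch does not do the job: Corollary~\ref{corol-directimage} lets you compare $H^*(\widetilde X,\pi^!\Gc(\widetilde D))$ with $H^*(X,\Gc(D))$, but you then have to identify a colimit of groups of the form $H^*(X,\Gc(D))$ with $H^*(X,\Gc)$, which is false in general. The paper instead introduces the notion of a \emph{$\Gc$-small} open subset, proves a \v Cech comparison (Lemma~\ref{lemma-Cech}) for coverings by $\Gc$-small opens with simple normal crossing complements, and then shows $\Gc_{\pol}$ is acyclic on $\Gc$-small opens by a Cartan-type induction (Lemma~\ref{prop-acycl}). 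This argument has no analogue in your outline.

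\textbf{Step (R).} Your description of Quillen's trick as an $\Ab^1$-deformation is not quite the mechanism used; more importantly, you are missing the step that is genuinely new when $\F\neq\OO_X$. In the fiber-product picture $Z\leftarrow Y\to X$, the candidate primitive $\frac{dt}{t}\wedge\rho^*\alpha$ lives in $\Pol_{p+1}(Y,\rho^*\F_Z)$, whereas the push-forward to $X$ requires a chain in $\Pol_{p+1}(Y,\psi^*\F)$. The two pullbacks $\rho^*\F_Z$ and $\psi^*\F$ coincide only along the section $\sigma(Z)$, and one must construct a morphism $\Phi:\rho^*\F_Z\to\psi^*\F(D)$ restricting to the identity on $\sigma(Z)$; this is done via Serre vanishing and a Bertini argument to absorb the auxiliary ample divisor $D$. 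Without this, the trick does not produce a polar chain with coefficients in $\F$.

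Your reduction-to-projective paragraph is essentially correct in spirit (extend $\F$ to a smooth projective closure and use a direct-limit argument), though the actual limit identity, Proposition~\ref{prop-redproj}, needs to be stated and proved carefully.
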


\begin{rmk}\label{rmk-notationKR}
Let $X$ be a smooth projective variety and $\F=\OO_X$. The polar complex in this
particular case was dealt with in the papers \cite{KR,KRT}, where the groups
$H_p(\Pol_{\,\bullet}(X,\OO_X))$ were denoted as $HP_p(X)$ and called {\it polar
homology groups}. The above theorem was proved in this case in \cite{KRT}; that is,
$H^{d-p}(X,\omega_X)=HP_{p}(X)$ in the notation of \cite{KRT}.
\end{rmk}

\begin{examp}\label{examp-curve}
Let us describe the complex $\Pol_{\,\bullet}(X,\F)$ in the simplest case when
$X$ is a smooth curve over $k$ and $\F$ a locally free sheaf on $X$. Then,
in homological degree one, we have
$$
\Pol_X(\F)=\varinjlim_D H^0(X,(\omega_X\otimes_{\OO_X}\F)(D))\,,
$$
where the direct limit is taken over all reduced effective divisors $D$ on $X$. In
other words, $\Pol_X(\F)$ consists of all rational sections of
$\omega_X\otimes_{\OO_X}\F$ which have only first order poles somewhere on $X$. In
homological degree zero, for each $x\in X$, we find that $\Pol_x(\F|_x)$ equals
$\F|_x$\,, the fiber at $x$ of the vector bundle associated with $\F$. The polar
complex $\Pol_{\,\bullet}(X,\F)$ then takes the form
$$
0\to\Pol_X(\F)\stackrel{\partial}\to\mbox{$\bigoplus\limits_{x\in X}\F|_x$}\to 0\,.
$$
This is a direct limit of the complexes
$$
0\to H^0(X,(\omega_X\otimes_{\OO_X}\F)(D))\stackrel{\partial}\to
\mbox{$\bigoplus\limits_{x\in D}\F|_x$}\to 0
$$
over all reduced effective divisors $i_D:D\hookrightarrow X$. The assertion of
Theorem~\ref{theor-main} is implied in this case by the long exact sequence of
cohomology groups associated with the exact sequence of sheaves on $X$:
$$
0\to\omega_X\otimes_{\OO_X}\F\to(\omega_X\otimes_{\OO_X}\F)(D)\to
i_{D\,*}(\F|_D)\to 0\,.
$$
\end{examp}

\subsection{A topological analogy}\label{subsection-topanalogy}

The result of Theorem~\ref{theor-main} was called in the paper \cite{KRT}
the Polar de Rham Theorem by a reason of the following
analogy with the topological situation (see, for example, \cite{KR} and references therein):

\vspace{-3mm}
$$
\begin{array}{rcl}
\mbox{smooth real manifold}&\longleftrightarrow &\mbox{complex algebraic manifold}\\
d & \longleftrightarrow & \bar\partial \\
\mbox{de~Rham~complex} & \longleftrightarrow & \mbox{Dolbeault~complex} \\
{\rm smooth~functions~or~sections} & \longleftrightarrow &
                              {\rm smooth~functions~or~sections} \\
{\rm locally~constant~functions~or~sections}
                       & \longleftrightarrow &
{\rm holomorphic~functions~or~sections} \\
\mbox{flat~bundles (locally constant sheaves)}     & \longleftrightarrow &
\mbox{holomorphic~bundles (locally free sheaves)} \\
\mbox{orientation}&\longleftrightarrow &\mbox{holomorphic top form}\\
\mbox{orientation sheaf}&\longleftrightarrow &\mbox{canonical sheaf}\\
\mbox{boundary operator}&\longleftrightarrow &\mbox{residue homomorphism}\\
\mbox{Borel--Moore homology with coefficients}&&
\mbox{polar homology with coefficients}\\
\mbox{in a locally constant sheaf}&\longleftrightarrow&
\mbox{in a locally free sheaf}\\
\end{array}
$$
Thus, Theorem~\ref{theor-main} can be viewed on as an analog of the
following fact (see, e.g., \cite[Sect.\ 8.8 A and D of Ch.\ 2]{VF}):
\begin{prop}\label{claim-topol}
Let $M$ be a smooth real manifold of dimension $d$, suppose $\LL$ is a
locally constant sheaf on $M$, and denote by ${\rm or}_M$  the
orientation sheaf on $M$. Then there is a canonical isomorphism
$$
H^{d-p}(M,{\rm or}_M\otimes_{\Z}\LL)\cong H_{p}^{BM}(M,\LL).
$$
\end{prop}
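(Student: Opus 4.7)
The plan is to interpret the proposition as the classical twisted Poincar\'e--Lefschetz duality for a (possibly non-orientable) smooth manifold. The cleanest route is sheaf-theoretic. Recall that for a locally compact Hausdorff space $M$ one may define Borel--Moore homology by
$$
H^{BM}_p(M,\LL)=H^{-p}(M,\omega_M\otimes_{\Z}\LL),
$$
where $\omega_M$ denotes the Verdier dualizing complex of $M$ with integer coefficients. The decisive input is that on a topological $d$-manifold there is a canonical quasi-isomorphism $\omega_M\simeq {\rm or}_M[d]$. This is a local statement: on any chart $U\cong\R^d$ one has $\omega_U\simeq\Z[d]$, the two possible choices of local quasi-isomorphism differ by the sign of a local orientation, and the resulting transition cocycle is precisely the one defining the orientation sheaf ${\rm or}_M$. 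Substituting this identification gives $H^{BM}_p(M,\LL)=H^{-p}(M,{\rm or}_M[d]\otimes_{\Z}\LL)=H^{d-p}(M,{\rm or}_M\otimes_{\Z}\LL)$, as required.

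For a more geometric proof (closer in spirit to the arguments of the present paper), one would construct the isomorphism directly via cap product with the twisted fundamental class. Locally over a chart $U\cong\R^d$, a choice of local orientation furnishes a fundamental class in $H^{BM}_d(U,\Z)$; the twist by ${\rm or}_M$ exactly compensates the ambiguity of this choice, yielding a globally defined natural morphism $H^{d-p}(M,{\rm or}_M\otimes_{\Z}\LL)\to H^{BM}_p(M,\LL)$. Both sides are functorial for open inclusions (Borel--Moore homology covariantly so) and satisfy the Mayer--Vietoris exact sequence with respect to open covers. Both vanish on $M=\R^d$ for $p\neq d$ and are canonically identified with the stalk $\LL_x$ in degree $d$. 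A good-cover argument combined with the five-lemma then extends the isomorphism from contractible open sets to any finite union of charts, and a colimit over increasing finite subcovers handles the general (paracompact) case.

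The main technical obstacle on either route is showing that the cap-product-with-fundamental-class map is globally well defined in the non-orientable setting. This is exactly what the twist by ${\rm or}_M$ is designed to achieve: the sheaf-theoretic approach packages this into the natural quasi-isomorphism $\omega_M\simeq {\rm or}_M[d]$, while a hands-on chain-level construction requires verifying explicitly that the local-orientation ambiguity is absorbed by the ${\rm or}_M$-twist on the cohomological side. Once this point is settled, the Mayer--Vietoris machinery and the local computation on $\R^d$ are entirely standard.
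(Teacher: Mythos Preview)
Your proposal is correct as a proof of the classical fact, but it follows a different route from the one sketched in the paper. The paper does not give a self-contained proof of this proposition (it cites \cite{VF}); rather, it outlines a specific argument chosen to motivate the proof of the main Theorem~\ref{theor-main}. That outline runs as follows: realize $H^{BM}_p(M,\LL)$ as the homology of the complex $C_\bullet(M,\LL)$ of locally finite singular chains with coefficients in $\LL$; sheafify this by setting $\underline{C}_\bullet(M,\LL)(U):=C_\bullet(M,\LL)/C_\bullet(M\smallsetminus U,\LL)$; observe that these sheaves are flabby; and then show, using tubular neighborhoods to bound cycles locally, that this complex of sheaves is quasiisomorphic to ${\rm or}_M\otimes_\Z\LL$ placed in homological degree $d$.

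Your Verdier-duality argument is slicker and more general (it works for any locally compact space once one knows $\omega_M\simeq{\rm or}_M[d]$ on a manifold), and your Mayer--Vietoris/five-lemma alternative is also perfectly standard. What the paper's sketch buys, however, is a precise template for the algebraic proof: the flabby sheaves $\underline{C}_\bullet(M,\LL)$ are the topological analogue of $\underline{\Pol}_\bullet(X,\F)$ (Definition~\ref{defin-Gerst}), and the tubular-neighborhood step is the analogue of Quillen's trick used in Section~\ref{section-polarresol}. Neither of your two approaches exhibits this parallel, so while they prove the proposition, they do not serve the expository purpose the paper has in mind.
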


The proof of Theorem~\ref{theor-main} given in Section~\ref{section-motivic} can be considered as an algebro-geometric analogue of the following proof of
Proposition~\ref{claim-topol}. Recall that the \mbox{Borel--Moore} homology on the
right of the above relation can be defined as homology of the complex
$C_{\bullet}(M,\LL)$ of locally finite chains with coefficients in $\LL$, that is, an
element in $C_p(M,\LL)$ is an infinite formal linear combination of singular
$p$-chains with coefficients in $\LL$ such that for any compact subset $K\subset M$,
there are only finitely many terms whose support meets $K$. Consider a homological
complex of flabby sheaves on $M$ given by the formula
$$
\underline{C}\,_{\bullet}(M,\LL)(U):=C_{\bullet}(M,\LL) \ /\
C_{\bullet}(M\smallsetminus U,\LL|_{M\smallsetminus U})
$$
for an open subset $U\subset M$. To conclude one shows that this complex of sheaves
is quasiisomorphic to the sheaf ${\rm or}_M\otimes_{\Z}\LL$ placed in the homological
degree $d$. A natural way to prove the latter is to use collar neighborhoods, which
allows one to represent locally a singular cycle as a boundary. An analog of collar
neighborhoods in algebraic geometry was invented by Quillen in \cite{Q}. This
construction is sometimes called ``Quillen's trick'' and is often used in various
motivic considerations. We also apply this method in our situation, see
Section~\ref{section-polarresol}.

\section{Proof of the main result}\label{section-motivic}

\subsection{Polar sheaves}\label{subsection-polarshv}

By analogy with what was used above in the proof of
Proposition~\ref{claim-topol}, consider the following sheafified
version of the polar complex:

\begin{defin}\label{defin-Gerst}
For a variety $X$, a locally free sheaf $\F$ on $X$, and $p\geqslant 0$, let us
denote by $\underline{\Pol}_{\,p}(X,\F)$ the sheaf of abelian groups on $X$ defined
so that, for an open subset $U\subset X$, we have
\begin{equation} \label{eq-def4.1}
\underline{\Pol}_{\,p}(X,\F)(U):=\Pol_p(X,\F) \ /\
\Pol_p(X\smallsetminus U,\,\F|_{X\smallsetminus U})  \,.
\end{equation}
\end{defin}

More explicitly, we have that
\begin{equation}\label{eq-relativecohom}
\underline{\Pol}_{\,p}(X,\F)(U)=\mbox{$\bigoplus\limits_{{Z\in
X_{(p)}}\atop{Z\cap U\ne \varnothing}}$}\Pol_Z(\F|_Z)\,.
\end{equation}
The latter expression implies that the sheaves $\underline{\Pol}_{\,p}(X,\F)$ are
flabby and form a complex $\underline\Pol_{\,\bullet}(X,\F)$ with a differential
$\partial$ induced by that on $\Pol_{\,\bullet}$ as defined in
Section~\ref{section-mainresult}.

We are going to deal with polar sections of $\Gc=\omega_X\otimes_{\OO_X}\F$\,,
which are distinguished among other rational sections by saying (roughly) that
they have at most first order poles, see Definition~\ref{defin-polarformsubvar}.
Therefore, it looks reasonable to consider also the following subsheaf of
abelian groups, $\Gc_{\pol}$\,, of the sheaf $\Gc$\,. For an open subset
$U\subset X$, the group~$\Gc(U)$ consists of rational sections of $\Gc$ on $X$
which are regular in $U$\,, and we consider a subset of those,
$\Gc_{\pol}(U)$\,, characterized by a certain restriction on their singularities
at $X\smallsetminus U$\,. We wished to say here ``first order poles at
$X\smallsetminus U$\,'', but a precise formulation is as follows.

\begin{defin-prop}\label{defin-polarsheaf}
Let $\Gc$ be a locally free sheaf on a smooth irreducible variety $X$ and
$U\subset X$ an open subset. Let us choose a smooth variety $\widetilde{X}$ and a
proper birational morphism $\pi\!:\widetilde{X}\to X$ such that
\mbox{$\widetilde{D}:=\pi^{-1}(X\smallsetminus U)$} is a simple
normal crossing divisor. Then, the formula (cf.\ eq.\ (\ref{equat-factorial}) in
Section~\ref{sect-prelim})
$$
\Gc_{\pol}(U)=H^0(\widetilde{X},\pi^!\Gc(\widetilde{D}))
$$
defines a sheaf of abelian groups on $X$ which will be called
the {\it polar sheaf} associated with a locally free sheaf $\Gc$.
The definition does not depend on the choice of $\widetilde{X}$.
\end{defin-prop}
\begin{proof}
Consider the sheaf $\K$ on $X$ defined by the following formula
$$
\K:=\Ker\,\mbox{\Large(}\,\underline{\Pol}_{\:d}(X,\omega_X^{-1}\otimes_{\OO_X}\Gc)
\xrightarrow{\partial}\underline{\Pol}_{\:d-1}(X,\omega_X^{-1}\otimes_{\OO_X}\Gc)
\mbox{\Large)} \,,
$$
where $d$ is the dimension of $X$. Consider an open subset $U\subset X$ and an element $\alpha\in\K(U)$. By Definition~\ref{defin-polarformsubvar} and the Hironaka theorem, there is a smooth variety $\widetilde{X}$, a
proper birational morphism $\pi\!:\widetilde{X}\to X$, and a simple normal crossing divisor $W\subset\widetilde{X}$ such that \mbox{$\widetilde{D}:=\pi^{-1}(X\smallsetminus U)$} is a union of some of the components of $W$ and the rational form $\pi^*\alpha$ on $\widetilde{X}$ belongs to $H^0(\widetilde{X},\pi^!\Gc(W))$. Moreover, since $\partial\alpha=0$, we have that $\res_{\widetilde{X}W_i}(\pi^*\alpha)=0$ for any irreducible component $W_i\subset W$ whose intersection with $\pi^{-1}(U)$ is non-empty. Therefore, $\pi^*\alpha\in H^0(\widetilde{X},\pi^!\Gc(\widetilde{D}))$. This implies that
there is a canonical isomorphism $\K(U)\cong \Gc_{\pol}(U)$. Therefore the
definition of the group $\Gc_{\pol}(U)$ is correct and $\Gc_{\pol}$ is a
sheaf.
\end{proof}

\begin{rmk}\label{pol=H0}
\hspace{0cm}
\begin{itemize}
\item[(i)]
The sheaf $\Gc_{\pol}$ is a subsheaf of abelian groups in $\Gc$
and is not a subsheaf of \mbox{$\OO_X$-modules}. Indeed, multiplying elements in
$\Gc_{\pol}(U)$ by an arbitrary regular function~$h$ on an open subset $U\subset
X$, we may clearly break the condition from Definition~\ref{defin-polarsheaf},
as~$h$ may have a pole of higher order along $D$.
\item[(ii)]
Given an open embedding $U\subset X$, the sheaf $(\Gc_{\pol})|_U$ is a subsheaf
of abelian groups
in the sheaf $(\Gc|_U)_{\pol}$\,, but these two are not equal in general.
Indeed, one has \mbox{$H^0(U,(\Gc|_U)_{\pol})=\Gc(U)$}\,, whereas
$H^0(U,(\Gc_{\pol})|_U)$ consists of those sections in $\Gc(U)$ which have at
most first order poles at $X\smallsetminus U$ (in the precise sense of
Definition~\ref{defin-polarsheaf}).
\item[(iii)]
The functor $\Gc\mapsto \Gc_{\pol}$ is exact in a locally free sheaf $\Gc$
(left exactness is obvious, while right exactness follows from the Serre vanishing
theorem). Similarly, the functors $\Pol_Z(\F|_Z)$ and $\Pol_{\,\bullet}(X,\F)$ are
exact in a locally free sheaf $\F$ on $X$.
\end{itemize}
\end{rmk}

\begin{theor}\label{theor-polarresol}
Let $X$ be a smooth irreducible quasi-projective variety of dimension $d$ and $\F$ a
locally free sheaf on $X$. Then, the complex of sheaves
$\underline\Pol_{\,\bullet}(X,\F)$ on $X$ is a flabby resolution for the sheaf
$(\omega_{X}\otimes_{\OO_X}\F)_{\pol}$, that is, the following sequence of sheaves is
exact:
$$
0\to(\omega_{X}\otimes_{\OO_X}\F)_{\pol}\to
\underline\Pol_{\:d}(X,\F)\xrightarrow{\partial}
\underline\Pol_{\:d-1}(X,\F)\xrightarrow{\partial}\ldots \xrightarrow{\partial}
\underline\Pol_{\:0}(X,\F)\to 0  \,.
$$
\end{theor}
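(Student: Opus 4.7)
The proof has three conceptually distinct pieces: verifying that each term is a flabby sheaf, identifying the kernel at the top, and establishing local exactness of the complex as a sequence of sheaves. The first two are essentially formal and I would dispatch them first. Flabbiness of $\underline{\Pol}_{\,p}(X,\F)$ is immediate from the explicit direct-sum description in~\eqref{eq-relativecohom}: extending a section from an open $V$ to $U\supset V$ amounts to extending by zero on the components indexed by subvarieties meeting $U\setminus V$ but not $V$, which is always possible. Exactness at position $d$, that is, the identification of $(\omega_X\otimes_{\OO_X}\F)_{\pol}$ with $\ker(\underline{\Pol}_{\,d}\xrightarrow{\partial}\underline{\Pol}_{\,d-1})$, is Remark~\ref{pol=H0}$(i)$ combined with Proposition-Definition~\ref{defin-polarsheaf} applied sectionwise.

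The heart of the theorem is local exactness at each stalk in positions $0\leqslant p<d$. Following the strategy announced in the introduction, I would split this into the projective case, proved via Quillen's trick, followed by a compactification argument to handle an arbitrary quasi-projective $X$. For the projective case, fix a point $x\in X$ and a local cycle $\gamma=\bigoplus (Z_i,\alpha_i)\in\underline{\Pol}_{\,p}(X,\F)_x$ with $\partial\gamma=0$; the goal is to produce, on some open neighborhood of $x$, a $(p+1)$-chain $\delta$ with $\partial\delta=\gamma$. The Quillen-trick input is geometric: using Lemma~\ref{lemma-Bertini} together with a projection from a suitable linear subspace, one arranges an auxiliary morphism $X\to\P^{d-p-1}$ whose generic fiber cuts each $Z_i$ transversally (after replacing each $Z_i$ by a resolution and adding the preimage of a normal-crossings divisor at infinity to the existing log-poles). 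Pulling back along a pencil gives, for each $Z_i$, a $(p+1)$-dimensional subvariety $Y_i\supset Z_i$ carrying a logarithmic top form $\beta_i$ with $\res_{Y_iZ_i}\beta_i=\alpha_i$; the other residues of $\beta_i$ occur along subvarieties $Z'_{ij}$ that move in the pencil and, by the cycle condition $\partial\gamma=0$ combined with the residue formulas~\eqref{equat-pullback} and~\eqref{equat-trace}, cancel in pairs among the different $Y_i$. This produces the desired $\delta=\bigoplus(Y_i,\beta_i)$ on an explicit neighborhood of $x$.

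For the reduction to the projective case, I would embed the quasi-projective $X$ into a smooth projective variety $\bar X$ via the Hironaka theorem (Proposition~\ref{prop-Hironaka}) so that $\bar X\setminus X$ is a simple normal crossings divisor, extend $\F$ to a locally free sheaf $\bar\F$ on $\bar X$, and observe that the formation of $\underline{\Pol}_{\,\bullet}$ is compatible with restriction to open subsets by~\eqref{eq-relativecohom}. Exactness of $\underline{\Pol}_{\,\bullet}(\bar X,\bar\F)$ at interior points of $X$ then yields exactness of $\underline{\Pol}_{\,\bullet}(X,\F)$ at those stalks, while the identification of the leftmost kernel is local in nature and already handled. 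Exactness at the rightmost position ($p=0$) reduces to the statement that a rational section of $\F|_Z$ for $Z=\{z\}$ a point is simply a vector in the fiber, which matches the image of $\partial$ from curves passing through $z$, again by the pencil construction above.

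The main obstacle is the explicit construction in the projective step: producing the extensions $(Y_i,\beta_i)$ with prescribed residue while ensuring that the spurious boundary contributions cancel. This is a delicate local-to-global problem that requires passing to a common resolution of the supports, controlling how logarithmic poles propagate under pull-back and trace via Lemmas~\ref{lemma-pullbacklog} and~\ref{lemma-tracelog}, and exploiting Proposition~\ref{prop-logrational} to guarantee that the pushforward of the resulting forms retains first-order-pole character on $X$. Once these geometric inputs are organized (presumably the role of the auxiliary lemmas promised in Subsections~\ref{section-cyclemod} and~\ref{subsection-geomlem}), the cancellation of extra residues is forced by $\partial\gamma=0$, and the desired sheaf-level exactness follows.
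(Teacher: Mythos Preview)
Your reduction to the projective case contains a genuine error. The claim that $\underline{\Pol}_{\,\bullet}(X,\F)$ is the restriction to $X$ of $\underline{\Pol}_{\,\bullet}(\bar X,\bar\F)$ is false. For an irreducible $Z\subset X$ with closure $\bar Z\subset\bar X$, the group $\Pol_{\bar Z}(\bar\F|_{\bar Z})$ consists of forms extending with log poles to a proper model of $\bar Z$, whereas $\Pol_Z(\F|_Z)$ only demands this over~$Z$; the latter is strictly larger, since a polar form on $Z$ may acquire poles of arbitrary order along $\bar Z\cap D$. The paper handles this via Proposition~\ref{prop-redproj}, which shows $\underline{\Pol}_{\,\bullet}(X,\F)=\varinjlim_n\underline{\Pol}_{\,\bullet}(\bar X,\bar\F(nD))|_X$, and then uses exactness of filtered colimits. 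Your argument as written does not close this gap.

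Your sketch of the projective step also misses a key idea and gets the geometry wrong. The projection is to $\P^{d-1}$, not $\P^{d-p-1}$: one arranges (Lemma~\ref{lemma-projection}) that $\pi|_Z$ is \emph{finite}, then forms the fiber product $Y=Z\times_{\P^{d-1}}X$, which is $(p+1)$-dimensional and comes with a finite map $\psi\!:Y\to X$ and a retraction $\rho\!:Y\to Z$ with section $\sigma$. The $Y_i$ are \emph{not} subvarieties of $X$ containing $Z_i$; this is precisely Quillen's point that the tubular neighborhood exists only Nisnevich-locally. The candidate chain $\gamma=\frac{dt}{t}\wedge\rho^*\alpha$ then satisfies $\partial\gamma=\sigma_*\alpha$ locally at $\psi^{-1}(x)$ not by any ``cancellation in pairs'' forced by $\partial\alpha=0$, but because the geometric lemma guarantees $\rho^*\alpha$ is regular at the preimages of $x$ other than $\sigma(x)$. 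Most importantly, $\gamma$ lies in $\Pol_{p+1}(Y,\rho^*\F_Z)$, not in $\Pol_{p+1}(Y,\psi^*\F)$, and only the latter can be pushed forward to $X$ via $\psi_*$. Bridging these two coefficient sheaves requires constructing a morphism $\Phi\!:\rho^*\F_Z\to\psi^*\F(D)$ restricting to the identity along $\sigma(Z)$, which the paper obtains from Serre vanishing. Your outline contains no analogue of this step, and without it there is no way to produce the desired $\beta\in\Pol_{p+1}(X,\F)$.
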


\begin{theor}\label{theor-polarquasiis}
Let $X$ be a smooth irreducible quasi-projective variety and $\Gc$ a locally free
sheaf on $X$. Then the canonical injective morphism of sheaves $\Gc_{\pol}\to\Gc$
induces isomorphisms of cohomology groups:
$$
H^p(X,\Gc_{\pol})\toiso H^p(X,\Gc), \qquad p\geqslant 0.
$$
\end{theor}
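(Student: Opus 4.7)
My plan is to first reduce the statement to the case of a projective $X$, and then, for projective $X$, to exploit the identity $\Gc_{\pol}(X \setminus D) = H^0(X, \Gc(D))$ provided by Corollary~\ref{corol-directimage} in order to run a \v{C}ech comparison on an affine cover. For the reduction to projective: by Hironaka (Proposition~\ref{prop-Hironaka}) I can embed $X$ as the complement of a simple normal crossing divisor $S$ in a smooth projective variety $\bar X$ and extend $\Gc$ to a locally free sheaf $\bar\Gc$ on $\bar X$. The behavior of $\Gc_{\pol}$ under compactification, visible through the resolutions in Definition-Proposition~\ref{defin-polarsheaf}, identifies $H^p(X, \Gc)$ and $H^p(X, \Gc_{\pol})$ with corresponding quantities for $\bar\Gc$ and a polar-type subsheaf on $\bar X$ that allows arbitrary behavior along $S$, so that it suffices to treat the projective case.

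For projective $X$, I would choose by Bertini (Lemma~\ref{lemma-Bertini}) finitely many simple normal crossing very ample divisors $D_1, \ldots, D_r$ with $\bigcap_i D_i = \varnothing$ and such that every union $D_I = \bigcup_{i \in I} D_i$ is again SNC. Set $U_i = X \setminus D_i$; these are affine and cover $X$, and every intersection is of the same form $U_I = X \setminus D_I$. By Serre's theorem, the \v{C}ech complex of $\Gc$ on this cover computes $H^*(X, \Gc)$. By Corollary~\ref{corol-directimage}, the $k$-th term of the \v{C}ech complex for $\Gc_{\pol}$ is $\bigoplus_{|I|=k+1} H^0(X, \Gc(D_I))$. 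A preliminary step is to verify that this \v{C}ech complex actually computes the sheaf cohomology of $\Gc_{\pol}$, despite $\Gc_{\pol}$ not being quasi-coherent; this I expect to deduce from the flabby resolution $\underline\Pol_{\,\bullet}(X,\F)$ furnished by Theorem~\ref{theor-polarresol} (applied to $\F = \omega_X^{-1} \otimes \Gc$) together with the exactness of the functor $\Gc \mapsto \Gc_{\pol}$ recorded in Remark~\ref{pol=H0}(iii).

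The heart of the argument is then to show that the natural inclusions $H^0(X, \Gc(D_I)) \hookrightarrow H^0(U_I, \Gc)$ induce a quasi-isomorphism of \v{C}ech complexes. Since $H^0(U_I, \Gc) = \varinjlim_n H^0(X, \Gc(n D_I))$, the cokernel is filtered by order of pole along $D_I$, with graded pieces governed by restrictions to $D_I$ through the exact sequences $0 \to \Gc((n-1)D_I) \to \Gc(nD_I) \to \Gc(nD_I)|_{D_I} \to 0$. A diagonal filtration argument across the \v{C}ech cover, combined with Serre vanishing for the ample divisors $D_I$ at large twists, should yield that the cokernel \v{C}ech complex is acyclic and hence give the desired isomorphism $H^*(X, \Gc_{\pol}) \cong H^*(X, \Gc)$. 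The main obstacle I anticipate is precisely this last step: the interaction between the \v{C}ech combinatorics and the pole-order filtration is delicate, and one must interleave Serre vanishing applied to various $D_I$ of differing ampleness. An attractive alternative, which may be cleaner, is to identify the polar complex $\Pol_{\,\bullet}(X, \F)$ with a subcomplex of the Cousin complex of $\omega_X \otimes_{\OO_X} \F$ and prove directly, by local analysis at generic points of subvarieties, that this inclusion is a quasi-isomorphism; this would reduce Theorem~\ref{theor-polarquasiis} to a statement about residue maps into local cohomology groups.
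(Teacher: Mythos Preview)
Your overall architecture matches the paper's: reduce to projective (the paper does this via Lemma~\ref{lemma-Orlov} and Proposition~\ref{prop-redproj}, expressing both $\Gc_{\pol}$ and $\Gc$ on $X$ as filtered colimits over twists $\bar\Gc(nD)$ on $\bar X$), and then on projective $X$ compute via a \v{C}ech cover by complements of sufficiently ample SNC divisors. One caveat already here: ``very ample'' is not enough; you need the $U_I$ to be what the paper calls \emph{$\Gc$-small}, i.e.\ $H^p(X,\Gc(D_I))=0$ for $p>0$, which comes from Serre vanishing after twisting high enough.

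Where you diverge from the paper, and where the real difficulty lies, is in the ``heart of the argument''. The paper does \emph{not} compare the \v{C}ech complex of $\Gc_{\pol}$ with that of $\Gc$. Instead (Lemma~\ref{lemma-Cech}) it observes that the \v{C}ech complex of $\Gc_{\pol}$, namely $\bigoplus_{|S|=\bullet+1}H^0(X,\Gc(D_S))$, is literally the global sections of a Koszul-type resolution $\K^\bullet\otimes\Gc(D_\varnothing)$ of $\Gc(D_\varnothing)$ by the acyclic sheaves $\Gc(D_S)$, and hence computes $H^*(X,\Gc(D_\varnothing))$ directly. Your proposed route---filtering the cokernel $\bigoplus_I H^0(U_I,\Gc)/H^0(X,\Gc(D_I))$ by pole order and invoking Serre vanishing ``diagonally''---is genuinely delicate, because the divisors $D_I$ vary with $I$ and there is no single filtration compatible with the \v{C}ech differential; I do not see how to make that work as stated, and the paper's Koszul argument bypasses it entirely.

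The second point you identify as ``preliminary''---that \v{C}ech on this cover computes $H^*(X,\Gc_{\pol})$---is in fact substantial, and your sketch does not cover it. Theorem~\ref{theor-polarresol} gives a flabby resolution of $\Gc_{\pol}$, hence identifies $H^p(U,\Gc_{\pol}|_U)$ with homology of a relative polar complex, but does not by itself prove this vanishes for $p>0$ on a $\Gc$-small $U$. The paper handles this (Lemma~\ref{prop-acycl}) by an induction on $p$ using the \v{C}ech spectral sequence: given $\alpha\in H^p(U,\Gc_{\pol}|_U)$, one first refines the cover (using Theorem~\ref{theor-polarresol} plus Proposition~\ref{prop-cyclepolar}(v) to pass to blow-ups so that all complements become SNC) so that $\alpha$ restricts to zero on each piece, and then the edge map and Lemma~\ref{lemma-Cech} kill it. Your proposal should incorporate this inductive step explicitly. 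The Cousin-complex alternative you mention is discussed in the paper only \emph{after} the main theorem, as a consequence, not as an independent proof.
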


Theorem~\ref{theor-polarresol} and Theorem~\ref{theor-polarquasiis} are proved first
for a projective $X$ in Section~\ref{section-polarresol} and
Section~\ref{section-relationcoher}, respectively. A reduction of both theorems to
the projective case is done in Section~\ref{section-reductionproj}. All together this
implies Theorem~\ref{theor-main}.

\subsection{Auxiliary tools}\label{section-cyclemod}

To prove Theorem~\ref{theor-polarresol} and Theorem~\ref{theor-polarquasiis} we
will use the following properties of the polar complex.

\begin{prop}\label{prop-cyclepolar}
For a locally free sheaf $\F$ on a variety $X$ the following is true:
\begin{itemize}
\item[(i)]
Let $f\!:Y\to X$ be a proper morphism. Then, there exists a canonical morphism of
complexes
$$
f_*\!:\Pol_{\,\bullet}(Y,f^*\F)\to\Pol_{\,\bullet}(X,\F)
$$
compatible with the trace of differential forms.
\item[(ii)]
For $f\!:Y\to X$ as above, let $U\subset Y$ be an open subset. Then,
the quotient complex
$$
\Pol_{\,\bullet}(Y,f^*\F)\ /\ \Pol_{\,\bullet}(Y\smallsetminus U,f^*\F|_{Y\smallsetminus U})
$$
canonically depends only on the morphism $f|_U\!:U\to X$\,.
\item[(iii)]
The projection $\pi\!:X\times\P^1\to X$ induces a
quasiisomorphism of complexes
$$
\pi_*:\Pol_{\,\bullet}(X\times\P^1,\pi^*\F)\to\Pol_{\,\bullet}(X,\F).
$$
\item[(iv)]
Let $E$ be a vector bundle on $X$ and $\P(E)$ a projectivization of
$E$. Then, the projection $\pi\!:\P(E)\to X$ induces a
quasiisomorphism of complexes
$$
\pi_*:\Pol_{\,\bullet}(\P(E),\pi^*\F)\to\Pol_{\,\bullet}(X,\F).
$$
\item[(v)]
Let $\pi\!:\widetilde{X}\to X$ be the blow-up at a center $R\subset X$ such
that $R$ is a local complete intersection in $X$. Let $Z$ be any closed
subvariety in $X$ and $\widetilde{Z}:=\pi^{-1}(Z)$ be the preimage of $Z$ in
$X$. Then, the canonical morphism
$$
\pi_*\!:\Pol_{\,\bullet}(\widetilde{Z},\pi^*\F|_{\widetilde{Z}})\to \Pol_{\,\bullet}(Z,\F|_Z)
$$
is a quasiisomorphism.
\end{itemize}
\end{prop}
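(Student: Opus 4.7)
The proposition splits into formal parts (i)--(ii) and substantive invariance statements (iii)--(v).

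For (i), I would define $f_*$ component by component. For an irreducible $Z \in Y_{(p)}$ and a polar form $\alpha \in \Pol_Z(f^*\F|_Z)$, set $f_*\alpha = 0$ if $\dim f(Z) < p$; otherwise $f|_Z$ is proper and generically finite, and one takes the trace of $\alpha$ on rational top differential forms. Lemma~\ref{lemma-tracelog}, applied on a smooth common resolution produced via Proposition~\ref{prop-Hironaka} and Lemma~\ref{lemma-resolmorph}, shows that the trace of a polar form is polar. Compatibility with the differential reduces to the residue-trace formula~\eqref{equat-trace}, verified on a resolution dominating the normalizations of $Z$ and its codimension-one subvarieties. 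For (ii), the description~\eqref{eq-relativecohom} identifies the quotient with $\bigoplus_{Z \cap U \neq \varnothing} \Pol_Z(f^*\F|_Z)$, with the indexing set in bijection with irreducible subvarieties of $U$ via $Z \mapsto Z \cap U$; Hironaka's theorem shows that each summand and the residue differential depend only on the datum $(Z \cap U, (f|_U)^*\F|_{Z \cap U})$, hence only on $f|_U$.

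For (iii), I would prove $\pi_*$ is a quasi-isomorphism via an explicit chain homotopy. The proper inclusions $i_0, i_\infty : X \hookrightarrow X \times \P^1$ at $t = 0, \infty$ give sections $\pi_* \circ (i_\epsilon)_* = \text{id}$ by~(i), so $\pi_*$ is split surjective on homology. Define the degree-raising operator $s(Z, \alpha) = (Z \times \P^1, \alpha \wedge d\log t)$, where $t$ is an affine coordinate on $\P^1$. Using~\eqref{equat-trace} together with $\res_0(d\log t) = 1 = -\res_\infty(d\log t)$, a direct computation yields $\partial s + s\partial = (i_0)_* - (i_\infty)_*$. To complete the argument one constructs an analogous sweep homotopy on $X \times \P^1$ itself, obtained by transporting the construction to suitable subvarieties of $X \times \P^1 \times \P^1$ (graphs of a family of automorphisms of $\P^1$ degenerating to a constant) and pushing down along the first two projections; this yields $(i_0)_* \pi_* \simeq \text{id}$ on $\Pol_{\,\bullet}(X \times \P^1, \pi^*\F)$.

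Parts (iv) and (v) follow from (iii) by a joint induction. For (v) with center $R$ of codimension $n$, stratify $X = R \sqcup (X \setminus R)$ and pull back to $\widetilde{X}$; part~(ii) furnishes long exact sequences of polar complexes on $\widetilde{X}$ and $X$ compatible with $\pi_*$. On the open stratum $\pi$ is an isomorphism and the pushforward is tautologically a quasi-isomorphism; on the closed stratum $\pi$ restricts to the $\P^{n-1}$-bundle $\P(N_{R/X}|_{R \cap Z}) \to R \cap Z$, whose pushforward is a quasi-isomorphism by~(iv). For (iv) applied to $\P(E) \to X$ with $\rk E = n$, sheafify to $\underline{\Pol}_{\,\bullet}$ and reduce by flabbiness and locality on $X$ to the trivial case $U \times \P^{n-1} \to U$, then induct on $n$: the base $n = 2$ is~(iii), and the step from $n-1$ to $n$ uses (v) applied to the blow-up of $U \times \P^{n-1}$ at a section (codimension $n-1$), which is simultaneously a $\P^1$-bundle over $U \times \P^{n-2}$ so that (iii) applies; (v) at codimension $n-1$ requires only (iv) for $\P^{n-2}$-bundles, which is the previous inductive step. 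The principal technical obstacle is the second sweep in (iii): one must verify that the constructed higher polar chain on $X \times \P^1 \times \P^1$ has only logarithmic poles on an appropriate resolution, invoking Hironaka and Proposition~\ref{prop-logrational} in an essential way, and that its residues produce exactly the desired chain-homotopy identity after careful sign bookkeeping.
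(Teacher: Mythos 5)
Your overall architecture is close to the paper's: (i) and (ii) are handled the same way (componentwise trace plus Lemma~\ref{lemma-resolmorph}, and the identification of the quotient complex with $\bigoplus_{Z\cap U\neq\varnothing}\Pol_Z(f^*\F|_Z)$), and your derivation of (v) from (ii) and (iv) via the two strata $Z\smallsetminus(Z\cap R)$ and $Z\cap R$ is exactly the paper's. Two points, however, deserve flagging. First, in (i) the formula \eqref{equat-trace} only covers the case where $f|_Z$ is generically finite onto its image; when $\dim f(Z)=\dim Z-1$ the commutation of $f_*$ with $\partial$ on the $Z$-component requires the residue theorem (the sum, over all divisors $D\subset Z$ dominating $f(Z)$, of the traces of $\res_{ZD}$ vanishes), which is precisely why the paper defers (i) to Theorem~3.8 of \cite{KR}. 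Second, for (iii) the paper again only cites \cite{KRT}; in your sketch the identity $\partial s+s\partial=(i_0)_*-(i_\infty)_*$ shows merely that the two sections are chain homotopic, which by itself says nothing about $\pi_*$ being a quasiisomorphism. The homotopy you actually need lives on $\Pol_{\,\bullet}(X\times\P^1,\pi^*\F)$ and must be manufactured from the multiplication $\P^1\times\P^1\dashrightarrow\P^1$, which is only a rational map (indeterminate at $(0,\infty)$ and $(\infty,0)$); resolving it and controlling residues along the exceptional locus is the entire content of the lemma, and your proposal leaves exactly this step open.

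Where you genuinely diverge from the paper is (iv). The paper never uses (v) as an input: it proves the trivial case $X\times\P^n$ by observing that $X\times\P^n$ and $X\times\P^{n-1}\times\P^1$ contain a common open subset $X\times\Ab^n$, so that by (ii) the quotient complexes modulo the complements $X\times\P^{n-1}$ and $X\times T$, with $T=(\P^{n-1}\times\{\infty\})\cup(\P^{n-2}\times\P^1)$, are isomorphic; the piece $X\times T$ is then disposed of by (ii) and (iii), and the general $\P(E)$ follows by Noetherian induction. Your interleaved induction $(iv)_{n-1}\Rightarrow(v)_{\mathrm{codim}\,n-1}\Rightarrow(iv)_{n}$ via the blow-up of $U\times\P^{n-1}$ along a section can be made to work (the normal bundle of $U\times\{\mathrm{pt}\}$ is trivial, so only the trivial case of the previous step is needed for that application of (v)), but the blow-up is the \emph{nontrivial} $\P^1$-bundle $\P(\OO\oplus\OO(1))$ over $U\times\P^{n-2}$, so ``(iii) applies'' is not literally true: you need the full statement of (iv) for rank-two bundles, which itself requires the localization/Noetherian-induction step before the main induction on $n$ can start. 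This is fixable by taking as induction hypothesis the full (iv) for all bundles of rank $<n$, but the paper's common-open-subset comparison avoids the issue entirely and is the cleaner route.
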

\begin{proof}
The proofs of $(i)$ and $(iii)$ are essentially
the same as of Theorem 3.8 in the
paper \cite{KR} (see also Remark 3.12 therein) and as of Lemma~3.5
in \cite{KRT}, respectively. The morphism $f_*$ is constructed as follows. Let $W\subset Y$ be an irreducible subvariety of dimension $p$ and let $\beta\in \Pol_W(f^*\F|_W)$ be a polar form so that the pair $(W,\beta)$ gives an element in $\Pol_p(Y,f^*\F)$. If the morphism $f|_W\!:W\to X$ is finite, that is, if \mbox{$\dim\,f(W)=\dim\,W$}, then we put
$$
f_*(W,\beta):=(Z,\alpha)\in\Pol_p(X,\F)\,,
$$
where $Z=f(W)$ and $\alpha:=\Tr_{k(W)/k(Z)}(\beta)$. Otherwise, we put~\mbox{$f_*(W,\beta):=0$}.

To prove $(ii)$, note first that for any irreducible variety $Z$ and a proper
morphism $g\!:Z\to X$\,, the group $\Pol_Z(g^*\F)$ depends only on the
birational class of $g$, in other words, it depends only on the morphism
$\Spec(k(Z))\to X$\,, the restriction to the generic point. This can be shown
with the help of Lemma~\ref{lemma-resolmorph}. The assertion $(ii)$ then follows
from the relation (compare to the equations (\ref{eq-def4.1},
\ref{eq-relativecohom})):
$$
\Pol_p(Y,f^*\F) \ /\
\Pol_p(Y\smallsetminus U,\,f^*\F|_{Y\smallsetminus U}) \cong
\mbox{$\bigoplus\limits_{{Z\in
Y_{(p)}}\atop{Z\cap U\ne \varnothing}}$}\Pol_Z(f^*\F|_Z)\,.
$$

Turning now to $(iv)$, let us first consider the case of a trivial bundle
$\pi\!:X\times\P^n\to X$ and use induction on $n$ as follows. The base of
induction is $(iii)$. To deduce $n$ from $n-1$, notice that the varieties
$X\times \P^n$ and $X\times\P^{n-1}\times\P^1$ have open subsets that are
isomorphic to $X\times\Ab^n$. The corresponding complements are $X\times\P^{n-1}$
and $X\times T$, where
\mbox{$T:=(\P^{n-1}\times\{\infty\})\cup (\P^{n-2}\times\P^1)$}.
Using $(ii)$, we obtain the isomorphism of complexes
$$
\Pol_{\,\bullet}(X\times
\P^n,\pi^*\,\F)\,/\,\Pol_{\,\bullet}(X\times\P^{n-1},\pi^*\,\F)\cong
\Pol_{\,\bullet}(X\times
\P^{n-1}\times\P^1,\rho^*\,\F)\,/\,\Pol_{\,\bullet}(X\times T,\rho^*\,\F)\,,
$$
where $\rho\!:X\times\P^{n-1}\times\P^1\to X$ is the natural projection. By this
isomorphism and the induction hypothesis, it suffices to prove that
$$
\Pol_{\,\bullet}(X\times T,\rho^*\,\F)\to \Pol_{\,\bullet}(X\times
\P^{n-1}\times\P^1,\rho^*\,\F)
$$
is a quasiisomorphism. (Here we use that an embedding of complexes is a
quasiisomorphism if and only if the quotient is acyclic.) By $(iii)$, it is
enough to show that the morphism $f:X\times T\to X\times\P^{n-1}$ induces a
quasiisomorphism
$$
f_*\!:\Pol_{\,\bullet}(X\times T,\rho^*\,\F)\to \Pol_{\,\bullet}(X\times \P^{n-1},\rho^*\,\F)\,.
$$
Put $U:=X\times \P^{n-1}\smallsetminus X\times\P^{n-2}$. Since $f$ restricts to an isomorphism $f^{-1}(U)\to U$, while the restriction of $f$ to $f^{-1}(X\times \P^{n-2})$ is the projection $X\times
\P^{n-2}\times \P^1\to X\times \P^{n-2}$, we conclude by $(ii)$ applied to $U$ and
$f^{-1}(U)$, and $(iii)$ applied to the latter
projection.

For the general case, $\pi\!:\P(E)\to X$, we use Noetherian induction, that is,
we assume that the desired quasiisomorphism is proved for all closed subsets
$S\varsubsetneq X$. Let $U\subset X$ be a non-empty open subset where the
initial bundle is trivial, $\P(E|_U)\cong U\times\P^n$\,. Let $S=X\smallsetminus U$
and let $\pi_S:\P(E|_S)\to S$ be the natural projection. Then, by $(ii)$ and
$(iv)$ for a trivial bundle, we have a  quasiisomorphism
$$
\Pol_{\,\bullet}(\P(E),\pi^*\,\F) \ /\
\Pol_{\,\bullet}(\P(E|_S),\pi_S^*(\F|_S)) \to
\Pol_{\,\bullet}(X,\F) \ /\
\Pol_{\,\bullet}(S,\F|_S)\,.
$$
Thus, we conclude by Noetherian induction and the $5$-lemma.

Finally, $(v)$ is implied by $(ii)$ and $(iv)$, because $\pi$ is an isomorphism
over $Z\smallsetminus (Z\cap R)$ and the restriction of $\pi:\widetilde{Z}\to Z$
to $Z\cap R$ is a projectivization of a vector bundle on~$Z\cap R$.
\end{proof}

Up to now we considered irreducible subvarieties in a given variety $X$ and certain
rational forms of top degree on them with coefficients in a locally free sheaf $\F$
on $X$ (see Definition~\ref{defin-polarcomplex}). For the sake of the proof of
Theorem~\ref{theor-polarresol} it is convenient to work with irreducible varieties
that map to $X$ not necessarily birationally to their image and certain rational
forms of not only top degree on them. Thus, Definition~\ref{defin-polarform} below
can be thought of as an extension of the notion of polar elements in
$\omega_K\otimes_K\F_K$, $K=k(Z),\;Z\subset X$, given in
Definition~\ref{defin-polarformsubvar}, to the case when $Z$ is replaced by an
irreducible variety $V$ together with a morphism $V\to X$, and rational top forms are
replaced by forms of arbitrary degree $q$ in $\Omega^q_K$\,, $K=k(V)$\,. The
requirement of first order poles is then to be replaced by the requirement that the
$q$-forms have logarithmic singularities.

\begin{defin}\label{defin-fieldover}
For a variety $X$, we say that $(K,\varphi)$ is a {\it field over $X$} if $K$ is
a field and there is a given morphism \mbox{$\varphi\!:\Spec(K)\to X$} such that
there exists an irreducible variety $V$ with $k(V)=K$ and a morphism of
varieties $f\!:V\to X$ that agrees at the generic point of $V$ with the given
morphism $\varphi$. We also say that $(K',\varphi')$ is an {\it extension} of
$(K,\varphi)$ over $X$ if $K\subset K'$ and the following diagram is
commutative:
\begin{equation}\nonumber
\begin{array} {c}
 \Spec(K')  ~ \longrightarrow ~ \Spec(K) \\
 \varphi'\searrow ~~~ \swarrow\varphi   \\
      X
\end{array}
\end{equation}
\end{defin}

In other words, for a given $X$, a field $(K,\varphi)$ over $X$ is a class of
birationally equivalent pairs \mbox{$(V,~f\!:V\to X)$}. In fact, one can always choose a representative $(V,f)$ such that~$V$ is smooth and $f$ is proper:

\begin{lemma}\label{lemma-resolproj}
Given a field $(K,\varphi)$ over a variety $X$, there exists a smooth
irreducible variety $V$ with $k(V)=K$ and a proper morphism $f:V\to X$ that
agrees at the generic point of $V$ with the given morphism $\varphi$.
\end{lemma}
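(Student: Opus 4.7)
The plan is to start from an arbitrary representative $(V_0,f_0)$ of the given field $(K,\varphi)$ over $X$ and successively improve it. By Definition~\ref{defin-fieldover}, such a pair exists: an irreducible variety $V_0$ with $k(V_0)=K$ together with a morphism $f_0\!:V_0\to X$ agreeing with $\varphi$ at the generic point of $V_0$. Two modifications are required: first, replace $f_0$ by a proper morphism in the same birational class over $X$; second, resolve singularities to obtain smoothness.

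For properness, I would use a graph compactification. Choose compactifications $V_0\subset \bar V_0$ and $X\subset \bar X$ (both exist by Nagata's theorem; in the quasi-projective case one simply takes projective closures). The graph $\Gamma_{f_0}\subset V_0\times X$ is an irreducible locally closed subvariety birational to $V_0$ via the first projection. Let $\bar V\subset \bar V_0\times \bar X$ denote its Zariski closure, and set $V_1:=p_{\bar X}^{-1}(X)\cap \bar V$, where $p_{\bar X}$ is the projection to the second factor. Then $V_1$ is irreducible with $k(V_1)=K$, contains $\Gamma_{f_0}\cong V_0$ as an open dense subset, and the restriction $f_1\!:V_1\to X$ of $p_{\bar X}$ is proper. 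Indeed, $\bar V\to\bar X$ is a morphism between proper varieties and hence proper, and $V_1$ is its preimage of the open subvariety $X\subset\bar X$, so $f_1$ inherits properness.

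Finally, apply Proposition~\ref{prop-Hironaka}$(i)$ to the irreducible variety $V_1$ to obtain a smooth irreducible variety $V$ together with a proper birational morphism $\pi\!:V\to V_1$. The composition $f:=f_1\circ\pi\!:V\to X$ is then proper (composition of proper morphisms), satisfies $k(V)=k(V_1)=K$, and agrees with $\varphi$ at the generic point of $V$ by construction. The only delicate point of the argument is ensuring that the projection descends to a proper morphism landing in $X$ rather than only in $\bar X$; this is handled by restricting the graph closure over $X$ before resolving singularities, so the intermediate variety $V_1$ is already proper over $X$ in the desired sense.
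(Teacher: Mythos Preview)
Your proof is correct and follows essentially the same route as the paper: graph closure to obtain properness, followed by Hironaka's resolution to obtain smoothness. The paper's version is slightly leaner in that it passes to an affine representative $U$, embeds $U\subset\P^n$, and takes the closure of the graph directly in $\P^n\times X$; since $\P^n\times X\to X$ is projective, properness is immediate and no compactification of $X$ is needed. Your detour through $\bar X$ and the subsequent restriction back over $X$ is harmless but unnecessary.
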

\begin{proof}
There exist an affine variety $U$ with $k(U)=K$ and a morphism $f_U:U\to X$ that
agrees at the generic point of $U$ with the given morphism $\varphi$. Let
$\Gamma_{f_U}$ be the graph of~$f_U$. Consider the embedding $U\subset \P^n$ for
some $n$. Then,
$$
\Gamma_{f_U}\subset U\times X\subset \P^n\times X.
$$
Take the closure $\bar{\Gamma}_{f_U}$ of $\Gamma_{f_U}$ in $\P^n\times X$. Note that
$\bar{\Gamma}_{f_U}$ contains $\Gamma_{f_U}\cong U$ as an open dense subset and the
projection to $X$ defines a proper (even projective) morphism
$\bar{\Gamma}_{f_U}\to X$. We can now get a smooth $V$ by resolving the singularities
of $\bar{\Gamma}_{f_U}$.
\end{proof}

\begin{defin}\label{defin-polarform}
Let $X$ be a variety, $\F$ a locally free sheaf on $X$, and $(K,\varphi)$ a field
over~$X$. We define $M_q(\F)(K,\varphi)$ as a set of all elements
$\alpha\in\Omega^q_K\otimes_K\varphi^*\F$ such that there exists a collection
$(V,f,W,\alpha_V)$, where $V$ is a smooth irreducible variety with $k(V)=K$,
$f\!:V\to X$ is a proper morphism that agrees at the generic point of $V$ with the
given morphism $\varphi$, $W\subset V$ is a simple normal crossing divisor,
$$
\alpha_V\in H^0(V,\Omega^q_V(\log W)\otimes_{\OO_V}f^*\F),
$$
and $\alpha$ is the restriction of $\alpha_V$ to the generic point of $V$.
\end{defin}

\begin{rmk}\label{rmk-polarcyclesubvar}
The definition of $M_q$ (Definition~\ref{defin-polarform})
is similar to the definition of polar elements (Definition~\ref{defin-polarformsubvar}), but concerns differential forms not necessarily of top degree and morphisms $f\!:V\to X$ that are not necessarily birational to their image.
Nevertheless, if $Z$ is an irreducible subvariety of dimension $p$ in~$X$, and
\mbox{$\varphi_Z\!:\Spec(k(Z))\to X$} is the natural morphism, then we have
$$
M_p(\F)(k(Z),\varphi_Z)=\Pol_Z(\F|_Z)\,.
$$
\end{rmk}

\begin{prop}\label{prop-polarforms}
Let $\F$ be a locally free sheaf on a variety $X$.
\begin{itemize}
\item[(i)]
For a field $(K,\varphi)$ over $X$, the set $M_q(\F)(K,\varphi)$ is a $k$-vector
subspace in \mbox{$\Omega^q_K\otimes_K\varphi^*\F$}.
\item[(ii)]
If $(K',\varphi')$ is an extension of $(K,\varphi)$ over $X$ (see
Definition~\ref{defin-fieldover}) then the pull-back map
$$
\Omega^q_K\otimes_K\varphi^*\F\to \Omega^q_{K'}\otimes_{K'}\varphi'^*\F
$$
sends $M_q(\F)(K,\varphi)$ to $M_q(\F)(K',\varphi')$.
\item[(iii)]
If $(K',\varphi')$ is an extension of $(K,\varphi)$ over $X$, such that
$K\subset K'$ is a finite extension then the trace map
$$
\Omega^q_{K'}\otimes_{K'}\varphi'^*\F\to\Omega^q_K\otimes_K\varphi^*\F
$$
sends $M_q(\F)(K',\varphi')$ to $M_q(\F)(K,\varphi)$.
\item[(iv)]
For a field $(K,\varphi)$ over $X$, the product of differential forms defines an
associative graded ring structure on
$\bigoplus\limits_{q\geqslant 0}M_q(\OO_X)(K,\varphi)$ and a graded module structure
over this ring on $\bigoplus\limits_{q\geqslant 0}M_q(\F)(K,\varphi)$.
\item[(v)]
For a field $(K,\varphi)$ over $X$, the image of the homomorphism
$d\log\!:K^*\to\Omega^1_K$ is contained in $M_1(\OO_X)(K,\varphi)$.
\item[(vi)]
Given an irreducible variety $V$, a proper morphism $f\!:V\to X$, and
an irreducible divisor $D\subset V$, there is a residue homomorphism
$$
\res_{VD}\!:M_q(\F)(k(V),\varphi)\to M_{q-1}(\F)(k(D),\psi),\quad q\geqslant 1,
$$
where $\varphi$ and $\psi$ are the restrictions of $f$ to the generic point of
\,$V$ and $D$, respectively. If a collection $(V,f,W,\alpha_V)$ is as in Definition~\ref{defin-polarform} and $D$ is an irreducible component of $W$, then the above map $\res_{VD}$ applied to the corresponding element $\alpha\in M_q(\F)(k(V),\varphi)$ is nothing but the usual residue of the logarithmic form $\alpha_V$ as defined in Section~\ref{sect-prelim}.
\item[(vii)]
Given an irreducible variety $V$, a morphism $f\!:V\to X$, an irreducible subvariety
$Z\subset V$ of codimension two, and an element $\alpha\in M_q(\F)(k(V),\varphi)$
with $\varphi$ being the restriction of $f$ to the generic point of $V$, we have
$$
\sum_D(\res_{DZ}\circ\res_{VD})(\alpha)=0
$$
where the sum is taken over all irreducible divisors $D$ in $V$ that contain $Z$.
\item[(viii)]
Suppose $g:X'\to X$ is a proper morphism. For a field $(K,\varphi')$ over $X'$
we set $\varphi:=g\circ\varphi'$. We then have a canonical isomorphism
$$
M_q(\F)(K,\varphi)\cong M_q(g^*\F)(K,\varphi').
$$
\end{itemize}
\end{prop}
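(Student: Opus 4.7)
The plan is to apply Hironaka's theorem (Proposition~\ref{prop-Hironaka}) together with Lemmas~\ref{lemma-pullbacklog}, \ref{lemma-tracelog}, \ref{lemma-resolmorph}, and \ref{lemma-resolproj} in a uniform way: the main technique is to dominate any finite collection of smooth proper representatives of a field $(K,\varphi)$ over $X$ by a single smooth proper model on which all the chosen divisors of poles are contained in one simple normal crossing divisor. Everything then reduces to the compatibilities of pull-back, trace, wedge product, and residue with the logarithmic condition that were already established in Section~\ref{sect-prelim}.

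For $(i)$, given two representatives $(V_i,f_i,W_i,\alpha_{V_i})$ of elements in $M_q(\F)(K,\varphi)$, apply Lemma~\ref{lemma-resolmorph} (combined with Hironaka) to obtain a smooth irreducible $V$ with $k(V)=K$, a proper $f\colon V\to X$ compatible with $\varphi$, proper birational morphisms $V\to V_i$, and a simple normal crossing divisor $W\subset V$ containing the preimage of $W_1\cup W_2$. Lemma~\ref{lemma-pullbacklog} then gives a common representative for any $k$-linear combination. Part $(ii)$ is analogous: pick a smooth proper model $V'$ of $(K',\varphi')$ by Lemma~\ref{lemma-resolproj}, resolve the induced rational map $V'\dasharrow V$ over $X$ via Lemma~\ref{lemma-resolmorph}, and pull back through Lemma~\ref{lemma-pullbacklog} after enlarging $W'$ appropriately. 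Part $(iv)$ follows because the wedge of two forms locally logarithmic along the same simple normal crossing divisor is again of the same type, once a common model has been chosen as in $(i)$. Part $(v)$ is obtained by picking a smooth proper $V$ with $k(V)=K$ and a simple normal crossing divisor $W\subset V$ containing the support of the divisor of $f\in K^*$, so that $d\log f\in H^0(V,\Omega^1_V(\log W))$.

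For $(iii)$, the inclusion $K\hookrightarrow K'$ gives a rational dominant map $V'\dasharrow V$ over $X$ between smooth proper models, and by Lemma~\ref{lemma-resolmorph} we turn it into a proper generically finite surjective morphism $g\colon V'\to V$ at the price of replacing $V'$ by a birational smooth model (which does not affect the class in $M_q(\F)(K',\varphi')$ by $(ii)$). Enlarge $W$ and $W'$ so that $W'\supseteq g^{-1}(W)$ and both are simple normal crossing; Lemma~\ref{lemma-tracelog} applied in a local trivialization of $f^*\F$ then places $g_*\alpha'_{V'}$ in $H^0(V,\Omega^q_V(\log W)\otimes_{\OO_V}f^*\F)$. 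Part $(vi)$ is handled similarly: start from a representative $(V',f',W',\alpha_{V'})$ of $\alpha$ with $V'$ birational to $V$ over $X$, and use Hironaka to produce a smooth proper birational $V''\to V'$ such that the strict transform $D''$ of $D$ is smooth and $W''\cup D''$ is a simple normal crossing divisor on $V''$; then $\res_{V''D''}$ applied to the pull-back of $\alpha_{V'}$ is a logarithmic form on $D''$ whose restriction to the generic point of $D''$ (which is birational to $D$) yields the desired element of $M_{q-1}(\F)(k(D),\psi)$. Part $(vii)$ is the classical reciprocity law: after another Hironaka step we may assume $Z$ is smooth and cut out by the transverse intersection of exactly two components of a simple normal crossing divisor containing the poles of the chosen representative of $\alpha$, whereupon the statement reduces to a local computation in coordinates $t_1,t_2$ via the explicit expression~\eqref{eq-logexpl}. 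Part $(viii)$ is essentially tautological: since $f$ is proper, any proper $g\colon V\to X$ agreeing with $\varphi$ factors, after passing to a proper birational cover if necessary via Lemma~\ref{lemma-resolmorph}, through a proper morphism $V\to X'$ over $X$, so that the two sets of data describing elements of $M_q(\F)(K,\varphi)$ and $M_q(f^*\F)(K,\varphi')$ are in natural bijection.

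The main obstacle is not a single deep step but the bookkeeping needed to make all models, divisors, and residues compatible; the crucial subtlety is well-definedness of the residue in $(vi)$, where one must verify that two different choices of resolution yield the same element of $M_{q-1}(\F)(k(D),\psi)$. This is achieved by dominating them by a third common model via Lemma~\ref{lemma-resolmorph} and invoking the pull-back formula~\eqref{equat-pullback} for residues; a similar domination argument, combined with~\eqref{equat-trace}, will be needed to show that residues and traces commute with the identifications used in $(iii)$ and $(viii)$.
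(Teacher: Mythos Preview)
Your proposal follows essentially the same route as the paper's own proof: dominate finitely many smooth proper models by a single one via Lemma~\ref{lemma-resolmorph} and Hironaka, then invoke Lemmas~\ref{lemma-pullbacklog} and~\ref{lemma-tracelog}; the treatment of $(i)$, $(ii)$, $(iv)$, $(v)$, $(vii)$, $(viii)$ and the well-definedness check in $(vi)$ are effectively identical to the paper's.

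There is one concrete slip in your handling of $(iii)$: you write ``enlarge $W$ and $W'$ so that $W'\supseteq g^{-1}(W)$'', but Lemma~\ref{lemma-tracelog} requires the opposite inclusion $W'\subseteq g^{-1}(W)$. Enlarging $W'$ does nothing useful, since the poles of your representative $\alpha'_{V'}$ already lie in the original $W'$; what you must do is enlarge $W$ on the base $V$ so that it contains the image $g(W')$, and then arrange that this enlarged $W$ is simple normal crossing. The paper accomplishes this by applying Hironaka to the pair $g(W')\subset V$, obtaining $\pi\colon\widetilde V\to V$ with $\pi^{-1}(g(W'))$ simple normal crossing, and then once more invoking Lemma~\ref{lemma-resolmorph} to make the induced rational map $V'\dasharrow\widetilde V$ regular. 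With that correction your argument goes through and coincides with the paper's.
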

\begin{proof}
Note that Lemma~\ref{lemma-pullbacklog} and Lemma~\ref{lemma-tracelog} can be
easily extended to the case of forms with coefficients in a locally free sheaf
on the target variety (denoted by $V$ there). Below we shall use this version of
Lemma~\ref{lemma-pullbacklog} and Lemma~\ref{lemma-tracelog}.

It is clear that, for a field $(K,\varphi)$ over $X$, the subset
$$
M_q(\F)(K,\varphi)\subset \Omega^q_K\otimes_K\varphi^*\F
$$
is closed under multiplication by elements of $k$. Further, consider two
elements $\alpha_1,\alpha_2\in\nolinebreak M_q(\F)(K,\varphi)$
and the corresponding collections
$(V_1,f_1,W_1,\alpha_{V_1})$, $(V_2,f_2,W_2,\alpha_{V_2})$ as in
Definition~\ref{defin-polarform}. Since $k(V_1)=k(V_2)=K$, there is a fixed
birational morphism \mbox{$g:V_1\dasharrow V_2$}. By
Lemma~\ref{lemma-resolmorph}, there is a smooth variety $V$ with proper
birational morphisms $h_i:V\to V_i$, $i=1,2$, such that
$f_1\circ h_1=f_2\circ h_2$.  Let $f:V\to X$ be any of the latter compositions.
By the Hironaka theorem, changing $V$ birationally, we may assume that
$W:=h_1^{-1}(W_1)\cup h_2^{-1}(W_2)$ is a simple normal crossing divisor in $V$. By
Lemma~\ref{lemma-pullbacklog}, the forms $h_i^*\alpha_{V_i}$, $i=1,2$, belong to
$H^0(V,\Omega^q_V(\log W)\otimes_{\OO_V}f^*\F)$. The restriction of the form
$$
h_1^*\alpha_{V_1}+h_2^*\alpha_{V_2}\in H^0(V,\Omega^q_V(\log W)\otimes_{\OO_V}f^*\F)
$$
to the generic point of $V$ is equal to
$\alpha_1+\alpha_2\in\Omega^q_K\otimes_K\varphi^*\F$, which shows $(i)$.

Consider an extension $(K',\varphi')$ of $(K,\varphi)$ over $X$, a form $\alpha$
in $M_q(\F)(K,\varphi)$, and a corresponding collection $(V,f,W,\alpha_V)$ as in
Definition~\ref{defin-polarform}. Let us denote by $\alpha'$ the pull-back of
$\alpha$. Our aim is to show that the rational form  $\alpha'\in
\Omega^q_{K'}\otimes_{K'} \varphi'^*\F$ belongs in fact to
$M_q(\F)(K',\varphi')$. By Lemma~\ref{lemma-resolproj}, there exist a smooth
irreducible variety $V'$ with $k(V')=K'$ and a proper morphism $f':V'\to X$ that
agrees at the generic point of $V'$ with the given morphism $\varphi'$. The
extension $(K',\varphi')$ of $(K,\varphi)$ gives us a rational map
$g:V'\dasharrow V$ over $X$ with $f\circ g=f'$. By Lemma~\ref{lemma-resolmorph},
we may assume now that $g$ is regular. By further  changing $V'$ birationally,
we may assume according to the Hironaka theorem that $W':=g^{-1}(W)$ is a
simple normal crossing divisor in $V'$. By Lemma~\ref{lemma-pullbacklog}, the
pull-back $\alpha_{V'}:=g^*\alpha_V$ gives an element in
$H^0(V',\Omega^q_{V'}(\log W')\otimes_{\OO_{V'}}f'^*\F)$. Since the restriction
of $\alpha_{V'}$ to the generic point of $V'$ is equal to $\alpha'$, the
assertion $(ii)$ is proved.

Suppose now that the extension $K\subset K'$ is finite, consider a form
$\alpha'$ in $M_q(\F)(K',\varphi')$ and a corresponding collection
$(V',f',W',\alpha_{V'})$ as in Definition~\ref{defin-polarform}. Let us denote
by~$\alpha$ the trace of $\alpha'$. Our aim is to show that the rational form
$\alpha\in \Omega^q_K\otimes_K \varphi^*\F$ belongs in fact to
$M_q(\F)(K,\varphi)$. By Lemma~\ref{lemma-resolproj}, there is a smooth
irreducible variety $V$ with $k(V)=K$, a proper morphism $f:V\to X$ that agrees
at the generic point of $V$ with the given morphism $\varphi$. The finite
extension $K\subset K'$ defines a rational map $g:V'\dasharrow V$ such that
$f\circ g=f'$. By Lemma~\ref{lemma-resolmorph}, we may assume that $g$ is
regular. The morphism~$g$ sends the divisor $W'\subset V'$ to a subvariety
$g(W')$ in $V$, which, however, does not have to be a simple normal crossing
in $V$. We may now apply the Hironaka theorem to the pair $g(W')\subset V$ and
get a proper birational morphism $\pi:\widetilde{V}\to V$ such that
$\widetilde{W}:=\pi^{-1}(g(W'))$ is a simple normal crossing divisor in
$\widetilde{V}$. Simultaneously, this gives us a rational map
\mbox{$\tilde{g}:V'\dasharrow \widetilde{V}$} and by
Lemma~\ref{lemma-resolmorph}, we may assume that this morphism is regular. By
abuse of notation, we shall now write $V$ instead of $\widetilde{V}$, $g$
instead of $\tilde{g}$,  and $W$ instead of $\widetilde{W}$. Thus, we
constructed a proper morphism $g:V'\to V$ such that  $f\circ g=f'$, $g$
corresponds at the generic point of $V'$ to the given morphism of fields over
$X$, and we have that $W'\subseteq g^{-1}(W)$. By Lemma~\ref{lemma-tracelog},
the trace $\alpha_V:=g_*\alpha_{V'}$ gives an element in
$H^0(V,\Omega^q_{V}(\log W)\otimes_{\OO_{V}}f^*\F)$. Since the restriction of
$\alpha_V$ to the generic point of $V$ is equal to $\alpha$, the assertion
$(iii)$ is proved.

Further, $(iv)$ follows by the same argument as in $(i)$ and the fact that
logarithmic forms are closed under products, provided the union of their poles is a
simple normal crossing divisor.

The assertion $(v)$ follows from the fact that $d\log$ applied to a non-zero rational
function gives a logarithmic form, provided that the support of the divisor of that
function is a simple normal crossing divisor. The latter can be achieved by the Hironaka
theorem.

Consider $V$, $f$, and $D$ as in $(vi)$. Note that $V$ and $D$ are not necessarily
smooth.  Take a form $\alpha$ in $M_q(\F)(k(V),\varphi)$ and a corresponding
collection $(V',f',W',\alpha_{V'})$  as in Definition~\ref{defin-polarform}. This
means in particular that $k(V')\cong k(V)$. By Lemma~\ref{lemma-resolmorph} we may
assume that the latter isomorphism corresponds to a birational morphism $g:V'\to V$.
Changing $V'$ birationally if necessary, we can make $g^{-1}(D)\cup W'$ be a simple
normal crossing divisor in $V'$. Let $\{D'_i\}$ be the set of all components of
$g^{-1}(D)$ that map dominantly to $D$. Denote by $g_i:D'_i\to D$ the restriction of
$g$ to $D'_i$ and by $\psi'_i:\nolinebreak\Spec(k(D'_i))\to X$ the restriction of $f'$
to the generic point of $D'_i$. For each $i$, the usual residue
$\res_{V'D'_i}(\alpha_{V'})$ at $D_i'$ defines an element $\beta_i$ in
$M_{q-1}(\F)(k(D'_i),\psi'_i)$. By $(iii)$ and $(i)$, the rational form
$\beta:=\sum_i g_{i*}(\beta_i)$ belongs to $M_{q-1}(\F)(k(D),\psi)$. We set now
$\res_{VD}(\alpha):=\beta$. Since the residue map commutes with the pull-back
and the trace map (cf.\ eqs.~\eqref{equat-pullback} and \eqref{equat-trace} in
Section~\ref{sect-prelim}; note that the ramification indices are $e_i=1$ in the
present case) the form $\beta$ does not depend on the choices made. This
explains~$(vi)$.

The proof of $(vii)$ goes without any difference with the proof of Theorem 3.9
in~\cite{KR}.

Finally, $(viii)$ is shown as follows. We have an obvious map
$M_q(g^*\F)(K,\varphi')\to M_q(\F)(K,\varphi)$. Let us construct the inverse
map. Let $(V,f,W,\alpha_V)$ be as in Definition~\ref{defin-polarform}. Then,
$\varphi'$ induces a rational map $h:V\dasharrow X'$. By
Lemma~\ref{lemma-resolproj} and the Hironaka theorem, there exists a proper
birational morphism $\pi:V'\to V$ from a smooth irreducible variety~$V'$, a
morphism $f':V'\to X'$, and a simple normal crossing divisor $W'\subset V'$ such
that $f'=h\circ \pi$ and $\pi^{-1}(W)\subseteq W'$. Then $(V,f,W,\alpha_V)$ in
$M_q(\F)(K,\varphi)$ is sent to~$(V',f',W',\pi^*\alpha_V)$.
\end{proof}

\subsection{A geometric lemma}\label{subsection-geomlem}

In the next subsection we shall use the following simple geometric
fact (it seems that there is no suitable analogue of that fact when
$X$ is not projective):

\begin{lemma}\label{lemma-projection}
Let $X$ be a smooth irreducible projective variety of dimension $d$
and $x\in X$ a point there. Let $Z\subset X$, $W\subset X$ be two
closed subvarieties such that $Z\ne X$ and any irreducible component
of $W$ has codimension at least two in $X$. Then, there exist a
blow-up $f\!:\widetilde{X}\to X$ at a finite set not containing $x$
and not intersecting $Z$ and $W$, and a surjective morphism
$\pi\!:\widetilde{X}\to \P^{d-1}$ such that $\pi$ is smooth at
$\widetilde{x}:=f^{-1}(x)$\,, the restriction
$\varphi:=\pi|_{\widetilde{Z}}$ to $\widetilde{Z}:=f^{-1}(Z)$ is
finite, and
$$
\widetilde{W}\cap\pi^{-1}(\pi(\widetilde{x}))\subseteq\{\widetilde{x}\}
\,,
$$
where $\widetilde{W}:=f^{-1}(W)$.
\end{lemma}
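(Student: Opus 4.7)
The plan is to embed $X$ into some $\P^N$ via a very ample linear system and realise $\pi$ as an extension of the linear projection from a sufficiently general linear subspace $\Lambda\subset\P^N$ of codimension $d$. The rational map $\pi_\Lambda\!:\P^N\dashrightarrow\P^{d-1}$ is regular away from $\Lambda$, so it restricts to a morphism on $X\smallsetminus(X\cap\Lambda)$; for a generic $\Lambda$ the set $X\cap\Lambda$ is a finite collection of points, and I would take $\widetilde X$ to be the blow-up of $X$ along $X\cap\Lambda$, which extends $\pi_\Lambda|_X$ to a regular morphism $\pi\!:\widetilde X\to\P^{d-1}$.

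Each conclusion of the lemma becomes an open condition on $\Lambda$ in the Grassmannian $G:=G(N-d,N)$, and I have to show that each condition cuts out a dense open subset, so their intersection is nonempty. Explicitly: $x\notin\Lambda$ is a linear condition; $Z\cap\Lambda=\varnothing$ and $W\cap\Lambda=\varnothing$ follow from the dimension counts $(d-1)+(N-d)<N$ and $(d-2)+(N-d)<N$; $X\cap\Lambda$ is finite by a Bertini argument (using projectivity of $X$); smoothness of $\pi$ at $\widetilde x$ amounts to the transversality $T_xX+T_x\Pi_0=T_x\P^N$, where $\Pi_0$ is the $(N-d+1)$-dimensional linear span of $\Lambda$ and $x$ (an open condition in $\Lambda$, satisfied generically because $X$ is smooth at $x$); and the finiteness of $\pi|_{\widetilde Z}\!:\widetilde Z\to\P^{d-1}$ reduces, since $\widetilde Z\cong Z$, to the classical statement that a generic linear projection of a closed subvariety of dimension $\leqslant d-1$ in $\P^N$ is finite onto its image in $\P^{d-1}$.

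It remains to check $\widetilde W\cap\pi^{-1}(\pi(\widetilde x))\subseteq\{\widetilde x\}$. Since the blow-up centre misses $W$, the exceptional divisor of $f$ does not meet $\widetilde W$, so this condition reduces, via $\widetilde W\cong W$, to $W\cap\Pi_0\subseteq\{x\}$. Equivalently, for every $w\in W\smallsetminus\{x\}$ the line $\overline{xw}\subset\P^N$ must avoid $\Lambda$; that is, $\Lambda$ must not meet the cone $C_x(W\smallsetminus\{x\})$ with vertex $x$ over $W\smallsetminus\{x\}$. This cone has dimension at most $\dim W+1\leqslant d-1$, so the dimension count $(d-1)+(N-d)<N$ once again yields disjointness from a generic $\Lambda$. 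Finally, surjectivity of $\pi$ is automatic: for generic $\Lambda$ the generic fibre of $\pi$ has the expected dimension $d+(N-d+1)-N=1$, hence the proper image in the irreducible $\P^{d-1}$ is of dimension $d-1$ and must equal $\P^{d-1}$. The main technical ingredient, and the place where projectivity of $X$ is indispensable, is the generic finiteness of $\pi_\Lambda|_Z$.
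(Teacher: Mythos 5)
Your proposal is correct and follows essentially the same route as the paper: embed $X$ in $\P^N$, choose a general linear center $\Lambda$ of dimension $N-d$ avoiding $Z$ and the cone over $W$ with vertex $x$, meeting $X$ in a finite set away from $x$ and meeting $T_xX$ transversally, then blow up $X\cap\Lambda$ and use the linear projection from $\Lambda$. The paper packages all these genericity requirements into a single appeal to Bertini, whereas you spell out each dimension count explicitly, but the construction is identical.
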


\begin{rmk}
The last condition means that we are trying to minimize the intersection of
$\widetilde{W}$ and the fiber of $\pi$ through $\widetilde{x}$. If $x\notin W$, then
we can achieve that they do not meet at all, otherwise, we can achieve that they
intersect only at $\widetilde{x}$.
\end{rmk}

\begin{proof}
Choose an embedding $X\subset \P^N$. Denote by ${\mathbb T}_xX$ the
projective tangent space to~$X$ at the point $x$, that is, the unique linear subspace in
$\P^N$ of dimension $d$ tangent to~$X$ at~$x$. Denote by $C_xW$ the cone over
$W$ with the vertex at $x$, that is, the Zariski closure of the union of all
lines that pass through $x$ and meet $W$. By the Bertini theorem, there exists
a linear subspace $L\subset \P^N$ of dimension $N-d$ such that $L$ meets $X$
transversely,~$L$ does not contain $x$, $L\cap {\mathbb T}_xX$ is a point,
$L\cap Z=\varnothing$, and $L\cap C_xW=\varnothing$. Let $f\!:\widetilde{X}\to X$
be the blow-up of $X$ at the finite set $L\cap X$. The projection
$\pi_L\!:\P^N\dasharrow\P^{d-1}$ with the center at $L$ defines then a morphism
$\pi\!:\widetilde{X}\to\P^{d-1}$ where $\pi=\pi_L|_{_X}\circ f$, which satisfies
all the properties needed. In particular, the restriction
$\pi|_{\widetilde{Z}}=\pi_L|_{_Z}$ is finite, as the center~$L$ does not
meet~$Z$,~see~\cite[Theorem 7 in Ch. I, \S 5]{Shaf}.
\end{proof}

\subsection{Algebraic collar neighborhood}\label{section-polarresol}

In this subsection we prove Theorem~\ref{theor-polarresol} when $X$
is a smooth irreducible {\it projective} variety.

\begin{proof}[Proof of Theorem \ref{theor-polarresol} for the projective case]
As it was noticed in the proof of Proposition~\ref{defin-polarsheaf}, there is a
canonical isomorphism of sheaves
$$
(\omega_X\otimes_{\OO_X}\F)_{\pol}\toiso
\Ker\,(\,\underline{\Pol}_{\:d}(X,\F)\to\underline{\Pol}_{\:d-1}(X,\F)) \,.
$$
Thus, it remains to show that the complex of sheaves
$\underline\Pol_{\,\bullet}(X,\F)$ is acyclic
(that is to say, locally, every cycle is exact).
Let $x\in X$ be an arbitrary
point and $\alpha\in \Pol_p(X,\F)$ be a chain.
In general, $\partial\alpha$ is a sum over irreducible $p$-dimensional
subvarieties in $X$. Let us denote
by $\partial_x\alpha$ the sum of those summands in $\partial\alpha$ whose support
contains $x$. Suppose that $\partial_x\alpha=0$. We have to show that there
exists a chain $\beta\in \Pol_{p+1}(X,\F)$ such that $\partial_x\beta=\alpha$.
Equivalently, this means that having $x\notin\supp\,\partial\alpha$, we want to
find a chain $\beta$ such that $x\notin \supp\,(\alpha-\partial\beta)$\,.

Let \mbox{$Z:=\supp\,\alpha$} and suppose $\{Z_i\}$ are the irreducible components of $Z$,
so that $\alpha=\oplus\alpha_i$ with $\alpha_i\in \Pol_{Z_i}(\F|_{Z_i})$. We may
assume that $Z_i$ contains $x$ for each~$i$.

\smallskip
\begin{rmk}\label{rmk-collar}
The idea of the proof is to use an algebraic version of a {\it
collar neighborhood}. Suppose a subvariety $T\subset X$ is a
``collar neighborhood'' of $Z$ Zariski locally at~$x$. By this we
mean that $Z\subset T$, $\dim\,T=\dim\,Z+1=p+1$, there is a
``contraction'' $\rho\!:T\to Z$, and $T$ is smooth at $x\in Z$ along
the normal direction to $Z$\,. If $\F=\OO_X$, then one could take
$\beta=\frac{dt}{t}\wedge \rho^*(\alpha)$, where $t$ is an equation
of $Z$ in $T$ locally at $x$. The problem is that such a collar
neighborhood $T$ does not always exist. An idea of how to overcome
this problem was found by Quillen in \cite[Theorem 5.11]{Q}. The
main point is to use a certain finite surjective map onto $X$ which
has a section over $Z$, that is, to solve the problem replacing
Zariski topology by Nisnevich topology.
\end{rmk}

First of all, by the local nature of the problem, we may replace $X$ by its
blow-up at a finite set not containing $x$. Hence by
Lemma~\ref{lemma-projection}, we may assume that there exists a morphism
$\pi\!:X\to \P^{d-1}$ such that $\pi$ is smooth at $x$\,, $\varphi:=\pi|_Z$ is
finite, and $W\cap \pi^{-1}(\pi(x))\subseteq\{x\}$, where $W$ is the union of
poles of forms $\alpha_i$ and singularities of~$Z_i$ over all $i$\,. Consider
the scheme $Y'=Z\times_{\P^{d-1}}X$\,, so that we have
$Z\xleftarrow{\,\rho}Y'\xrightarrow{\psi}X$\,. Denote by $Y$ the union of all
irreducible components of $Y'_{red}$ which contain an irreducible component of
$\sigma(Z)$, where $\sigma\!:Z\to Y'$ is a section of $\rho$ induced by the
embedding $Z\hookrightarrow X$. We thus have the diagram
$$
\begin{array}{ccc}
Y&\stackrel{\psi}\longrightarrow&X \\
\sigma\uparrow\downarrow\rho&&\downarrow\lefteqn{\pi}\\
Z&\stackrel{\varphi}\longrightarrow&\P^{d-1}
\end{array}
$$
Note that the morphism $\psi$ is finite, while the morphism $\rho\!:Y\to Z$ is
smooth at the finite set $S:=\psi^{-1}(x)$, as $\pi$ is smooth at $x$\,. In particular, $\rho$ is smooth at $\sigma(x)\in S$. Since all $Z_i$ contain $x$, we see that all $\sigma(Z_i)$ contain $\sigma(x)$. This implies that the set $\{Y_i\}$ of irreducible components of $Y$ is bijective with the set $\{Z_i\}$ of irreducible
components of $Z$ and each morphism $\rho\!:Y_i\to Z_i$ is surjective. In addition, locally at $x\in Z$, the fibers of the morphism~$\rho$ are irreducible curves and all the $Y_i$ have the same dimension~$p+1$. Denote by $\chi_i:\Spec(k(Y_i))\to Y$ the natural morphism for each irreducible
component~$Y_i$ of $Y$.

Since $\rho$ is smooth at $S$, there exists a local equation $t=0$ for the section $\sigma(Z)$ in~$Y$ locally at $S$. By Proposition~\ref{prop-polarforms}\,$(ii),(iv),(v),(viii)$
for each $i$, the rational differential form
\mbox{$\gamma_i:=\frac{dt}{t}\wedge \rho^*\alpha_i$} on $Y_i$ belongs to
$M_{p+1}(\rho^*\F_Z)(k(Y_i),\chi_i)=\Pol_{Y_i}((\rho^*\,\F_Z)|_{Y_i})$, where~$\rho^*\F_{Z}$ is the pull-back to $Y$ of $\F_{Z}$\,, the restriction to $Z$ of the
locally free sheaf $\F$ on $X$\,. Consider the chain
$$
\gamma:=\oplus\gamma_i\in \Pol_{p+1}(Y,\rho^*\F_Z)\,.
$$
For a point $y\in S$ different from $\sigma(x)$, each $\gamma_i$ is regular at $y$,
because $y\notin \sigma(Z)$ and \mbox{$\rho(y)\notin W$}.
Since $\sigma^*(\rho^*\F_Z)=\F_Z$, Proposition~\ref{prop-cyclepolar}\,$(i)$
gives us the morphism
$$
\sigma_*\!:\Pol_{\,\bullet}(Z,\F_Z)\to \Pol_{\,\bullet}(Y,\rho^*\F_Z)\,,
$$
whence we obtain an element $\sigma_*\alpha$ in $\Pol_{\,\bullet}(Y,\rho^*\F_Z)$.
We claim that $\partial_{\sigma(x)}\gamma=\sigma_*\alpha$. Indeed, for each $i$,
let $\partial_x\alpha_i=\oplus\eta_j$, where $j$ runs over irreducible
components of $\supp\,\partial_x\alpha_i$. It was shown above that locally at $x\in Z$ the fibers of the morphism~$\rho$ are irreducible curves. In addition, for each $j$, the support of $\eta_j$, $\supp\,\eta_j$, contains $x$, whence $\rho^{-1}(\supp\,\eta_j)$ is an irreducible variety of dimension $p$. Applying
Proposition~\ref{prop-polarforms}\,$(ii)$, we obtain the pull-back $\rho^*\eta_j$ of~$\eta_j$ from $\supp\,\eta_j$ to $\rho^{-1}(\supp\,\eta_j)$. Since $\res_{Y_i,\sigma(Z_i)}(\frac{dt}{t}\wedge\rho^*\alpha_i)=\sigma_*\alpha_i$, we then have that
$$
\mbox{$\partial_{\sigma(x)}\gamma_i=
\sigma_*\alpha_i\oplus\Big(\oplus_j(\frac{dt}{t}\wedge\rho^*\eta_j)\Big)$}\,.
$$
Taking the sum over all $i$ and using that $\partial_x\alpha=0$, we obtain the desired equality.
So, we see that $\partial\gamma=\sigma_*\alpha$ locally at $S$ on $Y$,
or, generalizing our notation, we write $\partial_S\gamma=\sigma_*\alpha$.

Now, from the chain $\gamma\in\Pol_{p+1}(Y,\rho^*\F_Z)$, we are going to construct
a suitable chain in $\Pol_{p+1}(Y,\psi^*\F)$ and, then, take its push-forward by
$\psi_*$ to $X$. This will give us the desired polar chain $\beta$ in
$\Pol_{p+1}(X,\F)$. For this aim, let us consider a locally free sheaf
$\Homc(\rho^*\F_Z,\psi^*\F)$ on $Y$ and a coherent sheaf
$$
\K:=\Ker\left(\Homc(\rho^*\F_Z,\psi^*\F)\to
\sigma_*\sigma^*\Homc(\rho^*\F_Z,\psi^*\F)\right).
$$
By the Serre vanishing theorem, there exists a very ample invertible
sheaf $\LL$ on $Y$ such that
$$
H^1(Y,\K\otimes_{\OO_Y}\LL)=0.
$$
In this case, the map
$$
H^0(Y,\Homc(\rho^*\F_Z,\psi^*\F\otimes_{\OO_Y}\LL))\to
H^0(Z,\sigma^*\Homc(\rho^*\F_Z,\psi^*\F\otimes_{\OO_Y}\LL))
$$
is surjective. Equivalently, the map
\begin{equation}\label{eq-sigma}
\sigma^*:\Hom(\rho^*\F_Z,\psi^*\F\otimes_{\OO_Y}\LL)\to
\Hom(\F_Z,\F_Z\otimes_{\OO_Z}\LL|_Z)
\end{equation}
is surjective, where we used that
$$
\sigma^*(\rho^*\F_Z)=\sigma^*(\psi^*\F)=\F_Z \,.
$$

Let $D\subset Y$ be the divisor of a general regular section of
$\LL$, and so $\LL\cong \OO_Y(D)$. Then, we may assume that $D\cap
S=\varnothing$ and $\sigma^{-1}(D)$ is a Cartier divisor on $Z$. The
surjectivity of the map~\eqref{eq-sigma} implies that there exists a
morphism of sheaves $\Phi:\rho^*\F_Z\to\psi^*\F(D)$ such that
$\sigma^*\Phi$ coincides with the embedding
$\Upsilon\!:\F_Z\hookrightarrow \F_Z(\sigma^{-1}(D))$. The following
chain on~$Y$,
$$
\Phi(\gamma)=\oplus\Phi(\gamma_i)\in\Pol_{p+1}(Y,\psi^*\F(D)) \,,
$$
satisfies $\partial(\Phi(\gamma))=\Phi(\partial\gamma)$, because the polar
complex is functorial in locally free sheaves. Recall that $\gamma$ was
constructed so that $\partial_S\gamma=\sigma_*\alpha$. Hence, we
have \mbox{$\Phi(\partial_S\gamma)=\Phi(\sigma_*\alpha)$}. Since
the push-forward map from Proposition~\ref{prop-cyclepolar}\,$(i)$ is functorial
in locally free sheaves on the target, we have that
$\Phi(\sigma_*\alpha)=\sigma_*((\sigma^*\Phi)(\alpha))$. Since
$\sigma^*\Phi=\Upsilon$, we have that
$\partial_S(\Phi(\gamma))=\sigma_*(\Upsilon(\alpha))$.

On the other hand, note that the space
$\Pol_{Y_i}(\psi^*\F|_{Y_i})$ consists of such elements in
$$
\Pol_{Y_i}(\psi^*\F(D)|_{Y_i})\subset
\omega_{k(Y_i)}\otimes_{k(Y_i)}(\psi^*\F(D))_{k(Y_i)}=
\omega_{k(Y_i)}\otimes_{k(Y_i)}(\psi^*\F)_{k(Y_i)}
$$
that have at most a first order pole along $D$. Consequently, by the Bertini
theorem (cf.\ Lemma~\ref{lemma-Bertini}), we may assume that $\Phi(\gamma_i)$
belongs to $\Pol_{Y_i}(\psi^*\F|_{Y_i})$, whence $\Phi(\gamma)\in\nolinebreak
\Pol_{p+1}(Y,\psi^*\F)$ with $\partial_S\Phi(\gamma)=\sigma_*\alpha$.
Therefore, by Proposition~\ref{prop-cyclepolar}\,$(i)$, the chain
$$
\beta=\psi_*(\Phi_Y(\gamma))\in \Pol_{p+1}(X,\F)
$$
satisfies $\partial_x\beta=\alpha$. This completes
the proof.
\end{proof}

\begin{rmk} The construction of the spaces of polar elements,
$\Pol_Z(\F|_Z)$, uses $X$ globally and these spaces change after we replace $X$
by an open subset. This is why, in the second part of the proof of
Theorem~\ref{theor-polarresol}, we can not use that, on a Zariski neighborhood of
$S$ in $Y$, the locally free sheaves $\rho^*\F_Z$ and $\psi^*\F$ become
isomorphic. To overcome this problem, we twist $\psi^*\F$ by a very ample
invertible sheaf $\LL$ on $Y$.
\end{rmk}

\subsection{A comparison with cohomology of locally free sheaves}
\label{section-relationcoher}

In this section we prove Theorem~\ref{theor-polarquasiis} for the projective
case. Thus, everywhere in this subsection we assume that $X$ is a smooth
irreducible projective variety, while $\Gc$ is a locally free sheaf on $X$.

\begin{defin}\label{defin-smallsubset}
An open subset $U\subset X$ is called {\it $\Gc$-small} if there exists a smooth
variety~$\widetilde{X}$ together with a proper birational morphism
$\pi\!:\widetilde{X}\to X$ such that $\widetilde{D}:=\pi^{-1}(X\smallsetminus
U)$ is a simple normal crossing divisor in $\widetilde{X}$ and for all $p>0$, we
have $H^{p}(\widetilde{X},\pi^!\Gc(\widetilde{D}))=0$.
\end{defin}

\begin{rmk}\label{rmk-smallsubset}
\hspace{0cm}
\begin{itemize}
\item[(i)]
Suppose $U\subset X$ is a $\Gc$-small subset and choose {\it any} proper birational
morphism $\tilde\pi\!:\widetilde{X}'\to X$ such that
$\widetilde{D}':=\tilde\pi^{-1}(X\smallsetminus U)$ is a simple normal crossing
divisor in $X'$. Then by Lemma~\ref{lemma-resolmorph} and
Corollary~\ref{corol-directimage}, for all $p>0$, we have that
$H^{p}(\widetilde{X}',\pi^!\Gc(\widetilde{D}'))=0$.
\item[(ii)]
It follows from (i) that if $U\subset X$ is $\Gc$-small and the complement
$D:=X\smallsetminus U$ is a simple normal crossing divisor, then
$H^p(X,\Gc(D))=0$ for all $p>0$.
\item[(iii)]
Let $U\subset X$ be a $\Gc$-small subset and $\pi\!:\widetilde{X}\to X$  a
proper birational morphism. Then it follows from (i) that the open subset
$\pi^{-1}(U)\subset\widetilde{X}$ is $\pi^!\Gc$-small.
\end{itemize}
\end{rmk}

We will use the following notation. Given a finite open covering $\{U_i\}$, $i\in I$,
of an open subset $U\subset X$ and a non-empty subset $S\subset I$ we define
$U_S:=\cap_{i\in S} U_i$\,, $D_S:=X\smallsetminus U_S$\,, while, for
$S=\varnothing$\,, we set $U_{\varnothing}:=U$ and
$D_{\varnothing}:=X\smallsetminus U$\,.

First we establish a relation between \v Cech cohomology of a polar sheaf
and cohomology of the underlying locally free sheaf.

\begin{lemma}\label{lemma-Cech}
Let $U\subset X$ be an open subset and suppose $U=\cup_{i\in
I}U_{i}$ is a finite open covering such that, for any non-empty
subset $S\subset I$, the open subset $U_S$ is $\Gc$-small and, for
any subset $S\subset I$ (including $S=\varnothing$), the subvariety
$D_S$ is a simple normal crossing divisor in $X$. Then, for any
$p\geqslant 0$, we have
$$
\check{H}^p(\{U_i\},\Gc_{\pol}|_U)=H^p(X,
\Gc(D_{\varnothing})),
$$
where $\check{H}$ denotes the \v Cech cohomology groups associated with an open covering.
\end{lemma}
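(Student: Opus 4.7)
The plan is to build a resolution of $\Gc(D_\varnothing)$ on $X$ whose global sections complex is the \v Cech complex, and to compute its hypercohomology in two ways. Define the complex of sheaves $\mathcal{L}^{\bullet}$ on $X$ with $\mathcal{L}^{-1}:=\Gc(D_\varnothing)$ and
$$
\mathcal{L}^p:=\bigoplus_{|S|=p+1}\Gc(D_S)\qquad(p\geqslant 0),
$$
with differentials the \v Cech-style alternating sums of the inclusions $\Gc(D_{S\setminus\{i\}})\hookrightarrow\Gc(D_S)$, well defined since $D_{S\setminus\{i\}}\leqslant D_S$ as reduced simple normal crossing divisors. By Proposition~\ref{defin-polarsheaf} applied with $\widetilde{X}=X$, the SNC hypothesis on each $D_S$ gives $\Gc_{\pol}(U_S)=H^0(X,\Gc(D_S))$, so $\mathcal{L}^{\geqslant 0}(X)$ is literally the \v Cech complex $\check{C}^{\,\bullet}(\{U_i\},\Gc_{\pol}|_U)$. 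Moreover, the $\Gc$-smallness of each non-empty $U_S$, via Remark~\ref{rmk-smallsubset}(ii), yields $H^q(X,\mathcal{L}^p)=0$ for $q>0$, $p\geqslant 0$.

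The technical heart of the argument is to show that $\mathcal{L}^{\,\bullet}$ is exact as a sheaf complex. Checking stalks at a point $x\in X$, let $A_S$ denote the set of local components of $D_S$ through $x$, so that $\Gc(D_S)_x\cong\bigl(\prod_{\alpha\in A_S}t_\alpha\bigr)^{-1}\Gc_x$ in local coordinates. The key combinatorial input is that, because $D_\varnothing=\bigcap_i D_i$ must be pure codimension one at $x$, there exists some $i_0\in I$ with $A_{i_0}=A_\varnothing=\bigcap_i A_i$, i.e.\ $D_{i_0}$ coincides with $D_\varnothing$ locally at $x$; otherwise any choice function $(\alpha_i)\in\prod_i A_i$ with $\{\alpha_i\}\cap\bigcap_j A_j=\varnothing$ would produce a codimension-$\geqslant 2$ component of $\bigcap_i D_i$, violating the SNC hypothesis. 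Ordering $I$ so that $i_0$ is minimal, the insertion-of-$i_0$ formula $(h\alpha)_S:=\alpha_{S\cup\{i_0\}}$ for $i_0\notin S$ (and $0$ otherwise) is well posed because $A_{S\cup\{i_0\}}=A_S\cup A_{i_0}=A_S$; a direct sign check then yields $dh+hd=\mathrm{id}$, exhibiting a chain contraction of the stalk complex.

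With exactness of $\mathcal{L}^{\,\bullet}$ established, $\mathcal{L}^{\geqslant 0}$ is a resolution of $\Gc(D_\varnothing)$ and so its hypercohomology equals $H^p(X,\Gc(D_\varnothing))$. Simultaneously, the vanishing of higher cohomology of each $\mathcal{L}^p$ degenerates the hypercohomology spectral sequence at $E_1$, yielding $\mathbb{H}^p(X,\mathcal{L}^{\geqslant 0})=H^p(\mathcal{L}^{\geqslant 0}(X))=\check{H}^p(\{U_i\},\Gc_{\pol}|_U)$. Equating the two expressions gives the lemma. The main obstacle is the SNC combinatorial step that furnishes $i_0$ with $D_{i_0}=D_\varnothing$ locally at $x$; without it, the insertion-of-$i_0$ chain contraction would fail to land in the correct subsheaves, and neither the sheaf exactness nor the final identification would go through.
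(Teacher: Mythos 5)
Your proof is correct and follows essentially the same route as the paper: both arguments build the Koszul/\v{C}ech-type complex of sheaves $\bigoplus_{|S|=p+1}\Gc(D_S)$ (the paper writes it as $\K^\bullet\otimes_{\OO_X}\Gc(D_{\varnothing})$ with $D_S=D_{\varnothing}+D'_S$), establish that it resolves $\Gc(D_{\varnothing})$ via the same SNC codimension count (your local statement ``some $i_0$ has $A_{i_0}=A_{\varnothing}$ at $x$'' is exactly the paper's $\cap_i D'_i=\varnothing$), and conclude by acyclicity of the terms from $\Gc$-smallness together with the identification $\Gc_{\pol}(U_S)=H^0(X,\Gc(D_S))$. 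The only difference is cosmetic: you spell out the exactness of the resolution by an explicit stalkwise chain contraction, a step the paper merely asserts as a standard property of the Koszul complex.
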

\begin{proof}
For each $i\in I$, denote by $D_i'$ the union of all irreducible components of $D_i$
that are not contained in $D_{\varnothing}$. {\it A priori}\,,
$\cap_i D_i'\subset D_{\varnothing}$, because $U=\cup_i U_i$. We claim that, in fact,
$\cap_i D_i'=\varnothing$. Indeed, suppose $Z$ is an irreducible component of the
intersection $\cap_i D'_i$. Since the divisor $\cup_i D_i'$ is a simple normal
crossing one, the codimension of $Z$ in $X$ equals the number $c$ of irreducible
components in $\cup_i D_i'$ that contain $Z$. On the other hand,
$Z\subset D_{\varnothing}$, so there are at least $c+1$ irreducible components in the
simple normal crossing divisor $\cup_i D_i$ that contain $Z$, provided that
$D_{\varnothing}$ is non-empty. Hence the codimension of~$Z$ in $X$ is at least
$c+1$, whence the contradiction.

Consider the following Koszul type complex of locally free sheaves on $X$:
$$
\K^{\bullet}=\mbox{$\{0\to \bigoplus\limits_{i\in I}\OO_X(D'_i)\to
\bigoplus\limits_{\{i,j\}\subset I}\OO_X(D'_i\cup
D'_j)\to\ldots\to\ldots \to\OO_X(D'_I)\to 0\}$},
$$
where $\K^l:=\oplus_{S\subset I, |S|=l+1}\OO_X(D'_S)$\,, $0\le l\le |I|-1$, and
$D'_S:=\cup_{i\in S}D'_i$ for each subset $S\subset I$. The differential in this
complex is given by an alternating sum over the natural embeddings
$\OO_X(D'_S)\hookrightarrow\OO_X(D'_T)$ for $S\subset T\subset I,\;  |T|=|S|+1$.
Note that all divisors~$D'_S$ are reduced. Since $\cap_i D_i'=\varnothing$, the
only non-zero cohomology sheaf of the complex of sheaves $\K^{\bullet}$ sits in
degree zero and equals $\OO_X$. Let us form a tensor product of $\K^{\bullet}$
with the sheaf $\Gc(D_{\varnothing})$, which yields now a resolution for
$\Gc(D_{\varnothing})$. Recall that by condition of the lemma,
$D_{\varnothing}+D'_S=D_S$ and $U_S$ is $\Gc$-small for any $S\subset I$,
$S\neq\varnothing$. Hence by Remark~\ref{rmk-smallsubset}(ii),
$H^p(X,\Gc(D_S))=0$ for all $p>0$, that is, the sheaves in the resolution
$\K^{\bullet}\otimes_{\OO_X}\Gc(D_{\varnothing})$ are acyclic and we have:
$$
H^p(X,\Gc(D_{\varnothing}))=
H^p(H^0(X,\K^{\bullet}\otimes_{\OO_X}\Gc(D_{\varnothing}))).
$$
It remains only to notice that the right hand side coincides by
definition with the \v Cech cohomology groups $\check{H}^p(\{U_i\},
\Gc_{\pol}|_U)$.
\end{proof}

We want to prove now that \mbox{$\check{H}^p(\{U_i\},\Gc_{\pol})= H^p(X,\Gc_{\pol})$}
for any $\Gc$-small open \mbox{covering} $\{U_i\}$ of $X$. This will follow once we
prove that the sheaf $\Gc_{\pol}$ is acyclic on $\Gc$-small open subsets.

\begin{lemma}\label{prop-acycl}
Let $U\subset X$ be an open $\Gc$-small subset. Then for any $p>0$, we have
$$
H^{p}(U,\Gc_{\pol}|_U)=0.
$$
\end{lemma}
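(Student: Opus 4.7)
The plan is to apply Lemma~\ref{lemma-Cech} to a suitable cofinal family of ``nice'' open covers of $U$, and then pass from vanishing of \v{C}ech cohomology to vanishing of sheaf cohomology via a Cartan-type argument. First I would reduce to the case when $D:=X\smallsetminus U$ is already simple normal crossing in $X$ and $H^p(X,\Gc(D))=0$ for all $p>0$. Pick $\pi\colon\widetilde{X}\to X$ as in Definition~\ref{defin-smallsubset}, a proper birational morphism that is an isomorphism over $U$, with $\widetilde{D}=\pi^{-1}(X\smallsetminus U)$ simple normal crossing and the required cohomological vanishing for $\pi^!\Gc(\widetilde{D})$. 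Since $\pi$ restricts to an isomorphism $U\toiso\pi^{-1}(U)$ and, by Remark~\ref{rmk-smallsubset}(iii), $\pi^{-1}(U)$ is $(\pi^!\Gc)$-small, a direct check via Proposition~\ref{defin-polarsheaf} shows that $\Gc_{\pol}|_U$ and $(\pi^!\Gc)_{\pol}|_{\pi^{-1}(U)}$ agree under this isomorphism; thus one may replace $(X,\Gc)$ by $(\widetilde{X},\pi^!\Gc)$.

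Next, fix a very ample line bundle $L$ on $X$, and let $\mathcal{B}$ be the collection of open subsets $V\subset U$ of the form $V=U\smallsetminus H$ with $H\in |L^n|$ (for some $n\geqslant 1$) a reduced simple normal crossing divisor in general position so that $D\cup H$ is simple normal crossing. By Serre vanishing applied to $\Gc(D)\otimes L^n$, each such $V$ is $\Gc$-small. By Bertini (Lemma~\ref{lemma-Bertini}), $\mathcal{B}\cup\{U\}$ is essentially closed under finite intersections (passing to general representatives), forms a basis for the topology of $U$, and, crucially, every finite open cover of any $V'\in\mathcal{B}\cup\{U\}$ admits a refinement $\{V_i\}\subset\mathcal{B}$ with every finite intersection $V_S$ again lying in $\mathcal{B}$. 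For such a refined cover, Lemma~\ref{lemma-Cech} gives
$$
\check{H}^p(\{V_i\},\Gc_{\pol}|_{V'})=H^p(X,\Gc(X\smallsetminus V'))=0\quad(p>0),
$$
the right-hand vanishing following from the $\Gc$-smallness of $V'$ and Remark~\ref{rmk-smallsubset}(ii).

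To conclude, I would invoke Cartan's theorem: on a Noetherian (hence paracompact) space, if \v{C}ech cohomology vanishes on every cover of a basis element by basis elements (the basis being closed under finite intersections), then sheaf cohomology itself vanishes on basis elements. Taking $V'=U$ yields $H^p(U,\Gc_{\pol}|_U)=0$ for $p>0$. The main obstacle I anticipate is precisely this passage from \v{C}ech to sheaf cohomology and the verification that $\mathcal{B}\cup\{U\}$ genuinely refines arbitrary covers (this is where the Bertini/Hironaka machinery matters). An equivalent hands-on route is induction on $p$: for $p=1$, the identity $H^1=\varinjlim\check{H}^1$ over refinements suffices; for $p\geqslant 2$, the Cartan--Leray spectral sequence $\check{H}^i(\mathfrak{V},\mathcal{H}^j(\Gc_{\pol}))\Rightarrow H^{i+j}(V',\Gc_{\pol}|_{V'})$, combined with the inductive hypothesis on basis opens (which kills rows $0<j<p$ at $E_2$), the $\check{H}^p$-vanishing from the preceding step, and the fact that $\mathcal{H}^j(\Gc_{\pol})$ sheafifies to zero for $j>0$, collectively yield $H^p(V',\Gc_{\pol}|_{V'})=0$ in the direct limit over refinements.
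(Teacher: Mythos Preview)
Your overall strategy---combining Lemma~\ref{lemma-Cech} with a Cartan-style induction on $p$ through the \v{C}ech-to-derived spectral sequence---matches the paper's proof exactly. Your first reduction (replacing $X$ by a blow-up so that $D=X\smallsetminus U$ becomes simple normal crossing) is correct and cleanly isolates the SNC hypothesis on $D_\varnothing$ needed in Lemma~\ref{lemma-Cech}.

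The gap is in your refinement step. You claim that the family $\mathcal{B}=\{U\smallsetminus H: H\in|L^n|,\ D\cup H\text{ SNC}\}$ forms a basis of $U$ and refines arbitrary open covers; this is not true. For $V=U\smallsetminus H$ to lie inside a given $U_i$, the hypersurface $H$ must contain the closure $\overline{U\smallsetminus U_i}$ in $X$, and a hypersurface forced through a prescribed closed set need not meet $D$ transversally. Concretely: take $X=\P^2$, $D$ a smooth conic, and $U_i=U\smallsetminus L$ with $L$ a tangent line to $D$; then any $H\supset L$ fails to be SNC with $D$ at the tangency point. So one cannot, in general, refine the cover $\{U_i\}$ on which $\alpha$ dies to a cover satisfying the hypotheses of Lemma~\ref{lemma-Cech} using Bertini alone. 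You flag this as ``the main obstacle'' but do not resolve it.

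The paper handles exactly this point by invoking Theorem~\ref{theor-polarresol} (projective case, already proved) together with Proposition~\ref{prop-cyclepolar}\,$(v)$: these identify $H^p(U,\Gc_{\pol}|_U)$ with the homology of a polar quotient complex and show it is invariant under blow-ups of $X$. One may therefore blow up \emph{inside} $U$ to make every $D_i=X\smallsetminus U_i$ a divisor with $\cup_i D_i$ simple normal crossing; after that, adding generic hyperplane sections $E_j$ produces a refined cover $V_{ij}=U_i\smallsetminus E_j$ meeting all hypotheses of Lemma~\ref{lemma-Cech}, and your spectral-sequence argument then goes through verbatim. What your proposal is missing is precisely this blow-up step and the appeal to Theorem~\ref{theor-polarresol} that justifies it.
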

\begin{proof}
We follow the idea of the proof of a well-known theorem of H.\,Cartan on
acyclicity of coverings (see, e.g.,~\cite[Section 1.4.5]{Dan}). We use induction
on $p\geqslant 1$. Consider an element $\alpha\in H^p(U,\Gc_{\pol}|_U)$. Any
cocycle is locally exact. Thus, there exists a finite open covering
$U=\cup _{i\in I} U_i$ such that $\alpha|_{U_i}=0$ for all $i\in I$. By Theorem
\ref{theor-polarresol} for the projective case and equation~\eqref{eq-def4.1} in
Section~\ref{subsection-polarshv}, we have an isomorphism
$$
H^{p}(U,\Gc_{\pol}|_U)\cong H_{d-p}(\Pol_{\bullet}(X,\F)\,/\,\Pol_{\bullet}(Z,\F|_Z))\,,
$$
where $d$ is the dimension of $X$, $Z:=X\smallsetminus U$, and
$\F:=\omega_X^{-1}\otimes_{\OO_X}\Gc$. Therefore, by the Hironaka theorem and
Proposition~\ref{prop-cyclepolar}\,$(v)$, we may suppose that every
$D_i=X\smallsetminus U_i$, $i\in I$, is a divisor and $D_I=\cup_{i\in I} D_i$ is
a simple normal crossing divisor in $X$. In particular, this implies that for
any subset $S\subset I$, the divisor $D_S$ is a simple normal crossing divisor
in~$X$.

Let $\LL$ be a very ample invertible sheaf on $X$ such that for any
finite subset $S\subset I$ and for all $p>0$, $m>0$, we have that
$$
H^p(X,\Gc(D_S)\otimes_{\OO_X}\LL^{\otimes m})=0 \,.
$$
By Lemma~\ref{lemma-Bertini}, there exist $d+1$ regular sections of $\LL$ with
irreducible divisors $E_j$, $j=1,\dots,d+1$ such that $\cap_j E_j=\varnothing$,
none of $E_j$ is contained in $D_I$, and the divisor $D_I\cup (\cup_j E_j)$ is a
simple normal crossing divisor in $X$. In particular, this implies that for
any subsets $S\subset I$ and $T\subset \{1,\ldots,d+1\}$, the divisor
$D_S\cup (\cup_{j\in T}E_j)$ is a simple normal crossing divisor in $X$.

Consider the covering $U=\cup V_{ij}$, where the open sets
$V_{ij}:=U_i\smallsetminus E_j$ are $\Gc$-small by construction as well as their
intersections. By ascending induction on $p$, it follows from the \v Cech spectral
sequence that there is an exact sequence
$$
0\to\check{H}^p(\{V_{ij}\},\Gc_{\pol}|_U)\to H^p(U,\Gc_{\pol}|_U)\to
\oplus_{ij}H^p(V_{ij}, \Gc_{\pol}|_{V_{ij}})\,.
$$
By construction, the covering $\{V_{ij}\}$ of $U$ satisfies the
condition of Lemma~\ref{lemma-Cech}. Hence the left term equals
$H^p(X,\Gc(D_{\varnothing}))$, where
$D_{\varnothing}=X\smallsetminus U$. By
Remark~\ref{rmk-smallsubset}(ii), since~$U$ is $\Gc$-small, the left
term vanishes. On the other hand, the right arrow sends any
\mbox{$\alpha\in H^p(U,\Gc_{\pol}|_U)$} to zero. Therefore, $\alpha=0$ and
this gives the needed result.
\end{proof}

We are now able to prove Theorem~\ref{theor-polarquasiis} for the projective case.

\begin{proof}[Proof of Theorem~\ref{theor-polarquasiis} for the projective case]
We are going to apply Lemma~\ref{lemma-Cech} with $U=X$ and, thus,
$D_{\varnothing}=\varnothing$.
By the same application of Lemma~\ref{lemma-Bertini} as in the proof of
Lemma~\ref{prop-acycl}, there exists an open covering $\{U_i\}$ of $X$ that
satisfies the conditions of Lemma~\ref{lemma-Cech}. Combining Lemma~\ref{lemma-Cech}
and Lemma~\ref{prop-acycl}, we get the needed result.
\end{proof}

\begin{rmk}\label{rmk-openpolar}
It follows from Lemma~\ref{lemma-Cech} and the proof of
Theorem~\ref{theor-polarquasiis} for a smooth projective $X$ that for an open subset
$U\subset X$ with $D:=X\smallsetminus U$ being a simple normal crossing divisor in
$X$ and for all $p\geqslant 0$, we have that
$$
H^p(U,\Gc_{\pol}|_U)=H^p(X,\Gc(D)).
$$
Then, by setting $\Gc:=\omega_X\otimes_{\OO_X}\F$ and applying
Theorem~\ref{theor-polarresol} for the projective case and
Definition~\ref{defin-Gerst}, we obtain that
$$
H^{d-p}(X,\omega_X\otimes_{\OO_X}\F(D)) =
H_{p}\left(\ \Pol_{\,\bullet}(X,\F)\,/\,\Pol_{\,\bullet}(D,\F|_D)\ \right)\,.
$$
\end{rmk}

\subsection{A reduction to the projective case}\label{section-reductionproj}

In this section we prove Theorem~\ref{theor-polarresol} and
Theorem~\ref{theor-polarquasiis} in full generality, that is, when $X$ is a smooth
irreducible quasi-projective variety. As a matter of fact, this more general case can
be reduced to the projective case.

The following fact was explained to the authors by D.\,Orlov.

\begin{lemma}\label{lemma-Orlov}
Let $X$ be a quasi-projective variety and $\F$ a locally free sheaf on $X$. Then
there exists a projective variety $\bar X$, an open embedding with dense image
$X\hookrightarrow \bar X$, and a locally free sheaf $\bar \F$ on $\bar X$ such that
$\bar \F|_X\cong \F$. If $X$ is smooth, then one can additionally require that
$\bar{X}$ is smooth and the complement $D:=\bar X\smallsetminus X$ is a simple
normal crossing divisor.
\end{lemma}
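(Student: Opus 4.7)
The proof splits naturally into three steps: produce a projective compactification of $X$, extend $\F$ to a coherent sheaf on it, and then modify both to make the extension locally free. Only the last step carries any real difficulty.

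Since $X$ is quasi-projective, I can embed it as an open subvariety of a projective $\bar{X}_0$ (take the closure of $X$ in a projective ambient space). Let $j\!:X\hookrightarrow\bar{X}_0$ be the inclusion. The quasi-coherent sheaf $j_*\F$ on the Noetherian scheme $\bar{X}_0$ is the filtered union of its coherent subsheaves; by taking finite generating sets of $\F$ on an affine open cover of $X$ and extending them to sections of $j_*\F$, one obtains a coherent subsheaf $\Gc_0\subset j_*\F$ with $\Gc_0|_X=\F$.

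To make $\Gc_0$ locally free without changing its restriction to $X$, I would invoke the Raynaud--Gruson flatification theorem: there is a blow-up $p\!:\bar{X}_1\to\bar{X}_0$ with centre contained in the non-flat locus of $\Gc_0$ such that the strict transform $\Gc_1$ of $p^*\Gc_0$ becomes flat over $\bar{X}_1$. Since a coherent sheaf of constant rank that is flat over a Noetherian integral scheme is locally free, $\Gc_1$ is locally free. Because $\Gc_0|_X=\F$ is already locally free, the non-flat locus lies in $\bar{X}_0\smallsetminus X$, so $p$ is an isomorphism over $X$ and $\Gc_1|_X\cong\F$. Being a blow-up of a projective variety, $\bar{X}_1$ is itself projective, which settles the first part of the lemma with $\bar{X}:=\bar{X}_1$ and $\bar{\F}:=\Gc_1$.

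For the additional assertion when $X$ is smooth, observe that $X$ sits inside the smooth locus of $\bar{X}_1$ (because $p$ is an isomorphism over the smooth $X$). I apply Proposition~\ref{prop-Hironaka}$(i)$ to the pair $(\bar{X}_1,\bar{X}_1\smallsetminus X)$, obtaining a smooth projective variety $\bar{X}$ and a proper birational morphism $\pi\!:\bar{X}\to\bar{X}_1$ which is an isomorphism over $X$ and has $\pi^{-1}(\bar{X}_1\smallsetminus X)$ a simple normal crossing divisor. Then $\bar{\F}:=\pi^*\Gc_1$ is the required locally free extension and $\bar{X}\smallsetminus X$ is a simple normal crossing divisor. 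The principal obstacle is the flatification step: in low dimensions one may substitute the torsion-free quotient (curves) or the double dual (surfaces) for the Raynaud--Gruson blow-up, but in arbitrary dimension some variant of a flattening modification over $\bar{X}_0\smallsetminus X$ appears unavoidable.
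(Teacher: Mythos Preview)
Your argument is correct, but the paper takes a markedly different and more elementary route. Rather than extending $\F$ to a coherent sheaf and then flattening, the paper arranges for the extension to be locally free from the start by realising $\F$ as the pullback of a tautological bundle: choose a surjection $\OO_X^{\oplus n}\twoheadrightarrow\F(N)$, which yields a morphism $g\!:X\to G=\mathrm{Gr}(r,n)$ with $\F(N)\cong g^*\E$; then take $\bar X$ to be the closure of $X$ under the diagonal embedding $X\hookrightarrow G\times\P^m$ and set $\bar\F:=p_1^*\E\otimes p_2^*\OO_{\P^m}(-N)$. This needs nothing beyond global generation of a twist and the universal property of the Grassmannian.

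Your approach trades this specific trick for general machinery. The appeal to Raynaud--Gruson is a bit heavy for the task; the same flattening is achieved more directly by blowing up the $r$-th Fitting ideal of $\Gc_0$ and passing to the torsion-free quotient, which is the classical way to make a coherent sheaf of generic rank $r$ locally free. Either way your argument goes through. The paper's Grassmannian construction is shorter and self-contained; your method has the advantage of being the ``obvious'' strategy and of generalising (e.g.\ to complexes or to situations where no Grassmannian model is available). Both proofs finish the smooth case identically via Hironaka.
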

\begin{proof}
Let $\OO_X(1)$ be a very ample sheaf on $X$ corresponding to a locally closed
embedding $X\hookrightarrow \P^m$ for some $m$ and let $r$ be the rank of $\F$. For
some natural numbers $n$ and $N$, there is a surjective morphism
$\OO_X^{\oplus n}\to \F(N)$ of sheaves on $X$. The latter defines a locally closed
embedding, $g:X\hookrightarrow G$, into the Grassmannian $G:={\rm Gr}(r,n)$ of
dimension~$r$ subspaces in a dimension $n$ vector space. Besides, $\F(N)\cong g^*\E$,
where $\E$ is the dual of the tautological bundle on the Grassmannian. Consider the
locally closed embedding
$$
X\hookrightarrow G\times \P^m
$$
and the closure of its image, $\bar X$. It remains to set
$\bar\F:=p_1^*\E\otimes p_2^*\OO_{\P^m}(-N)$, where $p_1$ and $p_2$
denote the projections from $\bar X$ to $G$ and $\P^m$, respectively. If $X$ is
smooth, the additional condition on $\bar X$ and $D$ can be achieved by the
Hironaka theorem.
\end{proof}

We also need the following result.

\begin{prop}\label{prop-redproj}
Let $X$ be an open subvariety in a smooth projective variety $\bar X$ such that the
complement  $D:=\bar X\smallsetminus X$ is a simple normal crossing divisor in
$\bar X$. Suppose $\bar \F$ is a locally free sheaf on $\bar X$ such that
$\F:=\bar\F|_X$. Then we have
$$
\F_{\pol}=\varinjlim_{n}\,\bar\F(nD)_{\pol}|_X,
$$
$$
\underline{\Pol}_{\,\bullet}(X,\F)=
\varinjlim_{n}~\underline{\Pol}_{\,\bullet}(\bar X,\bar\F(nD))|_X.
$$
\end{prop}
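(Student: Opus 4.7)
The plan is to prove the two equalities in turn, with the second relying on the same compactification idea that underlies the first. The common theme is that $\bar\F(nD)$ agrees with $\F$ away from $D$, so ``taking the direct limit in $n$'' corresponds geometrically to ``allowing arbitrary singular behaviour along $D$'', and the identifications reduce in each case to the standard fact that for a coherent sheaf $\Gc$ on a smooth variety $Y$ and an effective Cartier divisor $\widetilde{D}\subset Y$, one has $H^0(Y\smallsetminus\widetilde{D},\Gc)=\varinjlim_n H^0(Y,\Gc(n\widetilde{D}))$.

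For the first equality, I would fix an open $U\subset X$ and compute both sides on $U$. Using Lemma~\ref{lemma-resolmorph} and Corollary~\ref{corol-directimage} (the independence of the definition in Proposition~\ref{defin-polarsheaf} on the chosen resolution), one may evaluate both $\F_{\pol}(U)$ and every $\bar\F(nD)_{\pol}(U)$ on a single proper birational $\pi\!:\widetilde{\bar X}\to\bar X$ with $\widetilde{\bar X}$ smooth, $\pi$ an isomorphism over $U$, and $\pi^{-1}(\bar X\smallsetminus U)=\widetilde{D}\cup\widetilde{C}$ a simple normal crossing divisor, where $\widetilde{D}:=\pi^{-1}(D)$ and $\widetilde{C}:=\pi^{-1}(X\smallsetminus U)$. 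Then $\bar\F(nD)_{\pol}(U)=H^0(\widetilde{\bar X},\pi^!\bar\F\otimes\OO(n\pi^*D+\widetilde{D}+\widetilde{C}))$, whereas $\F_{\pol}(U)=H^0(\widetilde{\bar X}\smallsetminus\widetilde{D},\pi^!\bar\F\otimes\OO(\widetilde{C}))$. Since every component of $\widetilde{D}$ appears with positive multiplicity in $\pi^*D$, passing to the direct limit in $n$ produces exactly the sections over $\widetilde{\bar X}\smallsetminus\widetilde{D}$, yielding the desired identification of sheaves on $X$.

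For the second equality, I would invoke the explicit decomposition~\eqref{eq-relativecohom}: for $U\subset X$ open, both sides split as direct sums over $p$-dimensional irreducible subvarieties meeting $U$. Subvarieties of $\bar X$ contained in $D$ do not meet $U$, while the remaining ones correspond bijectively, via closure, to irreducible subvarieties of $X$. Since $k(Z)=k(\bar Z)$ and $\bar\F(nD)$ coincides with $\bar\F$ at the generic point of $\bar Z$ whenever $\bar Z\not\subset D$, the identity reduces to the term-by-term assertion
\[
\varinjlim_n \Pol_{\bar Z}\bigl(\bar\F(nD)|_{\bar Z}\bigr)=\Pol_Z(\F|_Z)
\]
for each such $\bar Z$. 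The inclusion $\subseteq$ is immediate by restricting any witnessing datum $(V,f\!:V\to\bar Z,W,\alpha_V)$ to $f^{-1}(Z)\subset V$: the restriction is still smooth, the restricted divisor is still SNC, and the restricted form becomes a section of $\omega_{f^{-1}(Z)}(W\cap f^{-1}(Z))\otimes f^*\F$, because $\bar\F(nD)|_X=\F$.

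For the reverse inclusion, given $\alpha\in\Pol_Z(\F|_Z)$ with data $(V,f\!:V\to Z,W,\alpha_V)$, I would apply Proposition~\ref{prop-Hironaka} to construct a smooth compactification $\bar V\supset V$ together with a proper birational $\bar f\!:\bar V\to\bar Z$ extending $f$, arranged so that $\bar V\smallsetminus V=\bar f^{-1}(D)$ and $\bar W:=W\cup\bar f^{-1}(D)$ are simple normal crossing. Viewed as a rational section of $\omega_{\bar V}(\bar W)\otimes\bar f^*\bar\F$, $\alpha_V$ is regular on $V$ and has at worst high-order poles along the components of $\bar f^{-1}(D)$; since every such component appears with positive multiplicity in $\bar f^*D$, twisting by $\bar f^*\OO(nD)$ for $n$ sufficiently large converts $\alpha_V$ into a global section of $\omega_{\bar V}(\bar W)\otimes\bar f^*\bar\F(nD)$, placing $\alpha$ in $\Pol_{\bar Z}(\bar\F(nD)|_{\bar Z})$. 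The main technical obstacle is precisely this construction of $(\bar V,\bar f)$: one must arrange smoothness of $\bar V$, SNC of $W\cup\bar f^{-1}(D)$, and properness of $\bar f$ simultaneously, which requires iterated use of Hironaka and the graph-closure technique of Lemma~\ref{lemma-resolmorph}.
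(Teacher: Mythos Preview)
Your proposal is correct and follows essentially the same route as the paper: fix a single resolution $\pi\!:\widetilde{\bar X}\to\bar X$ to compute both sides of the first equality via the formula $H^0(Y\smallsetminus\widetilde{D},\Gc)=\varinjlim_n H^0(Y,\Gc(n\widetilde{D}))$, and reduce the second equality to the termwise statement $\Pol_Z(\F|_Z)=\varinjlim_n\Pol_{\bar Z}(\bar\F(nD)|_{\bar Z})$ via a compactification $(\bar V,\bar f,\bar W)$ of the witnessing data. One small wrinkle to tighten: your $\widetilde{C}=\pi^{-1}(X\smallsetminus U)$ is not closed in $\widetilde{\bar X}$; the paper takes its closure $E$ instead, and then the passage between $\OO(E+n\widetilde{D})$, $\OO((E\cup\widetilde{D})+n\widetilde{D})$, and $\OO(E\cup\widetilde{D})\otimes\pi^*\OO(nD)$ requires a brief cofinality remark, since $E$ and $\widetilde{D}$ may share components and $\pi^*D$ need not be reduced.
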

\begin{proof}
To show the first equality let us choose an arbitrary open subset $U$ in $X$ and
compare sections of both sheaves on $U$. By the Hironaka theorem,
there exists a smooth projective variety $\bar Y$ and a proper birational morphism
$\bar \pi:\bar Y\to \bar X$ such that $\bar\pi$ is an isomorphism over $U$ and the
following  divisors in $\bar Y$ are simple normal crossing:
$\widetilde{D}:=\bar\pi^{-1}(D)$, the closure $E$ of
$\bar\pi^{-1}(X\smallsetminus U)$ in $\bar Y$, and
$\bar\pi^{-1}(\bar X\smallsetminus U)=E\cup \widetilde{D}$. We set
$Y:=\bar\pi^{-1}(X)=\bar Y\smallsetminus\widetilde{D}$ and $\pi:=\bar\pi|_{Y}$.
Below, we use that for a variety $V$, a Cartier divisor $W\subset V$, and a coherent
sheaf $\Gc$ on $V$, one has that
$H^0(V\smallsetminus W,\Gc|_{V\smallsetminus W})=\varinjlim\limits_n H^0(V,\Gc(nW))$.
Thus, we have
$$
\F_{\pol}(U)=H^0(Y,\pi^!\F\otimes\OO_Y(\pi^{-1}(X\smallsetminus U))=
\varinjlim_n H^0(\bar Y,\bar\pi^!\bar\F\otimes\OO_{\bar Y}(E)
\otimes\OO_{\bar Y}(n\widetilde{D}))=
$$
$$
=\varinjlim_n H^0(\bar Y,\bar\pi^!\bar\F\otimes\OO_{\bar Y}(n\widetilde{D})
\otimes\OO_{\bar Y}(E\cup\widetilde{D}))=
\varinjlim_n H^0(\bar Y,\bar\pi^!\bar\F
\otimes\bar\pi^*\OO_{\bar X}(nD)
\otimes\OO_{\bar Y}(E\cup\widetilde{D}))=
$$
$$
=\varinjlim_n H^0(\bar Y,\bar\pi^!\left(\bar\F(nD)\right)
\otimes\OO_{\bar Y}(\bar\pi^{-1}(\bar X\smallsetminus U)))=
\varinjlim_n \bar\F(nD)_{\pol}(U),
$$
which proves the first equality.

For the second equality it is enough to show that for any irreducible
closed subvariety
$\bar Z\subset \bar X$ with non-empty $Z:=X\cap \bar Z$, we have
$$
\Pol_{Z}(\F|_Z)=\varinjlim_n\limits\Pol_{\bar Z}(\bar \F(nD)|_{\bar Z}).
$$
To prove the latter, recall that, by definition, for each $n$, the space
$\Pol_{\bar Z}(\bar \F(nD)|_{\bar Z})$
is the union in $\omega_{k(Z)}\otimes_{k(Z)}\F_{k(Z)}$ of the subspaces
$$
H^0(\bar V,\omega_{\bar V}(\bar W)\otimes_{\OO_{\bar V}}
\bar f^*\bar\F(nD)|_{\bar Z})
$$
over all collections $(\bar V,\bar f,\bar W)$, where $\bar V$ is a smooth variety,
$\bar f:\bar V\to \bar Z$ is a proper birational morphism, and $\bar W\subset \bar V$
is a simple normal crossing divisor in $\bar V$. Furthermore, by the Hironaka
theorem and Corollary~\ref{corol-directimage}, we may consider only collections
$(\bar V,\bar f,\bar W)$ such that
$C:=\bar f^{-1}(\bar Z\smallsetminus Z)=\bar f^{-1}(\bar Z\cap D)$ and $C\cup \bar W$
are simple normal crossing divisors in $\bar V$. On the other hand, by
Lemma~\ref{lemma-resolmorph} and Corollary~\ref{corol-directimage}, the group
$\Pol_Z(\F|_Z)$ is equal to the union in $\omega_{k(Z)}\otimes_{k(Z)}\F_{k(Z)}$ of
the groups
$$
H^0(V,\omega_{V}(W)\otimes_{\OO_{V}}f^*\F|_{Z})
$$
over all collections $(\bar V,\bar f,\bar W)$ as above, where $V:=\bar f^{-1}(Z)$,
$f:=\bar f|_V$, and $W:=\bar W\cap V$. Analogously to what was done for the first
equality, we have
$$
H^0(V,\omega_{V}(W)\otimes_{\OO_{V}}f^*\F|_{Z})=
\varinjlim_n H^0(\bar V,\omega_{\bar V}(\bar W)
\otimes_{\OO_{\bar V}}\bar f^*\bar\F|_{\bar Z}\otimes_{\OO_{\bar V}}\OO_{\bar V}(nC))=
$$
$$
=\varinjlim_n H^0(\bar V,\omega_{\bar V}(\bar W)\otimes_{\OO_{\bar V}}
\bar f^*\bar\F(nD)|_{\bar Z}) \,,
$$
which proves the second equality.
\end{proof}

Recall that for $X$, $\bar X$, $D$ as in Proposition~\ref{prop-redproj}
and for any $p\geqslant 0$, we have
$$
H^p(X,\F)=\varinjlim_n\,H^p(\bar X,\bar \F(nD))\,,
$$
because filtered direct limits commute with cohomology of complexes. Therefore,
combining Lemma~\ref{lemma-Orlov}, Proposition~\ref{prop-redproj}, and
Remark~\ref{rmk-openpolar}, we complete the reduction to the projective case and, thus,
finally prove Theorems~\ref{theor-polarresol} and \ref{theor-polarquasiis}.

\section{Further properties of polar chains}\label{section-ps}

\subsection{A relation with Rost's cycle modules}\label{subsection-cyclemod}

In the paper \cite{Rost} M.\,Rost introduced the so-called {\it cycle modules}.
Roughly speaking, a cycle module over a variety $X$ is a rule that, to each
field $(K,\varphi)$ over $X$, associates a graded abelian group $M(K,\varphi)$.
The groups $M(K,\varphi)$ should be equipped with pull-back, trace, and residue maps
with respect to fields over $X$, which satisfy various natural properties. Let
us consider the groups $M_q(\F)(K,\varphi)$ introduced in
Definition~\ref{defin-polarform} and set
$$
M(\F)(K,\varphi):=\mbox{$\bigoplus\limits_{q\geqslant
0}M_q(\F)(K,\varphi)$} \,.
$$
Proposition~\ref{prop-polarforms} is equivalent to saying that
we have in this way obtained a cycle module:

\begin{prop}\label{prop-cyclemod}
Let $\F$ be a locally free sheaf on a variety $X$. Then, the correspondence
$$
(K,\varphi)\mapsto M(\F)(K,\varphi)
$$
extends to a cycle module over the variety $X$, where $(K,\varphi)$
runs over all fields over $X$.
\end{prop}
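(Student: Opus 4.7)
The plan is to unpack Rost's definition of a cycle module and match each piece of structure and each axiom to the corresponding assertion already proved in Proposition~\ref{prop-polarforms}. Recall that a cycle module over $X$ consists, by definition, of a functor $(K,\varphi)\mapsto M(K,\varphi)$ from fields over $X$ to $\Z$-graded abelian groups, equipped with (D1) pull-back maps $\varphi'^*$ along field extensions over $X$, (D2) trace maps $\varphi'_*$ along finite field extensions over $X$, (D3) a $K_*^M$-module structure, and (D4) residue maps $\partial_v$ attached to geometric discrete valuations; these data must satisfy the compatibility axioms R1a--R3e, (FD) and (C) of Rost. So the first step is simply to declare the data: the graded group is $M(\F)(K,\varphi)=\bigoplus_{q\geqslant 0}M_q(\F)(K,\varphi)$ (well-defined as a $k$-vector space by Proposition~\ref{prop-polarforms}$(i)$); the pull-back is that of Proposition~\ref{prop-polarforms}$(ii)$; the trace is that of $(iii)$; the residue is that of $(vi)$. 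The $K_*^M$-action is obtained by noting that (iv) gives a graded $M_\bullet(\OO_X)(K,\varphi)$-module structure on $M_\bullet(\F)(K,\varphi)$, while (v) says $d\log K^*\subseteq M_1(\OO_X)(K,\varphi)$, so that the universal property of Milnor $K$-theory (symbols satisfy the Steinberg relation automatically in $\Omega^\bullet_K$) produces a canonical graded ring homomorphism $K_*^M(K)\to M_\bullet(\OO_X)(K,\varphi)$.

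Next, I would verify the axioms group by group. The functoriality axioms (R1a, R1b, R1c) concerning compatibility of pull-backs and traces along chains of field extensions are immediate from the corresponding statements for rational differential forms (together with the projection formula for $\Tr$), once one reduces, via Lemma~\ref{lemma-resolmorph} and the Hironaka theorem, to a common smooth model carrying both sides. The projection formula R2 (traces commute with the Milnor $K_*^M$-module structure and with $d\log$ of norms) is the usual projection formula for differential forms combined with the identity $\Tr_{K'/K}\!\circ\! d\log=d\log\!\circ\!\Nm_{K'/K}$ applied degree by degree, using that pull-back is a ring map. The residue axioms R3a (residues commute with pull-back, up to ramification) and R3b (residues and traces along residue extensions) are exactly formulas~\eqref{equat-pullback} and~\eqref{equat-trace} from Section~\ref{sect-prelim}, extended to coefficients in a locally free sheaf (as was done in the proof of Proposition~\ref{prop-polarforms}). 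Axioms R3c, R3d (residues and the $K_*^M$-action on units that are or are not uniformisers) are the local computation $\res(\tfrac{dt}{t}\wedge\beta)=\beta|_{t=0}$ together with the fact that for a unit $u$ at the valuation, $d\log u$ is regular, so wedging with it commutes with residue.

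The remaining structural axioms are (FD) and (C). Condition (FD) asserts that for a given element and a given smooth integral $X$-scheme $V$, the residue at almost all codimension one points vanishes; this is built into Definition~\ref{defin-polarform}, since the model $(V',f',W',\alpha_{V'})$ realising $\alpha\in M_q(\F)(k(V),\varphi)$ has a simple normal crossing divisor of poles, giving non-zero residues only along finitely many divisors on any birational model. Condition (C), the vanishing of the composite of two consecutive residues $\partial\circ\partial$ summed over codimension one subvarieties of a given codimension two point, is precisely Proposition~\ref{prop-polarforms}$(vii)$. Finally, the axiom that the construction is contravariant in $X$ via proper pull-back is Proposition~\ref{prop-polarforms}$(viii)$, which gives the compatibility needed to speak of a cycle module genuinely over $X$ rather than merely a family of modules indexed by fields.

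The hardest step, as usual in Rost's framework, is the combination of R3a with ramification indices and the verification of (C); but R3a was already recorded as~\eqref{equat-pullback} and (C) is $(vii)$, whose proof is the same as that of Theorem~3.9 in~\cite{KR}, so both are essentially imported. Once the data are declared and these identifications are made, the proposition follows immediately: every defining piece of structure of a cycle module has been supplied, and every one of Rost's axioms either was proved in Proposition~\ref{prop-polarforms} or reduces to a standard compatibility of pull-back, trace, wedge product, $d\log$ and residue on rational differential forms.
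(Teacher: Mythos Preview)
Your proposal is correct and follows exactly the same approach as the paper: both reduce the proposition to Proposition~\ref{prop-polarforms} together with the standard compatibilities of pull-back, trace, wedge product, $d\log$, and residue for logarithmic forms collected in Section~\ref{sect-prelim}. The paper's own argument is the single sentence that parts $(i)$--$(vi)$ supply the data and $(vii)$ plus the identities of Section~\ref{sect-prelim} yield Rost's axioms; your write-up is simply a careful unpacking of this, matching each of (D1)--(D4), the R-axioms, (FD), and (C) to the corresponding item, which is a helpful elaboration rather than a different route.
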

Indeed, Proposition~\ref{prop-polarforms}$(i)$-$(vi)$ establishes the existence of
all the data needed to define a cycle module over $X$, while
Proposition~\ref{prop-polarforms}$(vii)$ together with general properties of the
pull-back, trace, and residue maps for logarithmic forms (see
Section~\ref{sect-prelim}) imply Rost's axioms obeyed by the data above.

Until the end of this subsection we assume that $X$ is smooth and projective.
Then by Proposition~\ref{prop-polarforms}$(viii)$, the cycle module $M(\OO_X)$
is the pull-back of the cycle module $M(\OO_{\rm pt})$ over a point
${\rm pt}=\Spec(k)$: for any field $(K,\varphi)$ over $X$, we have
$$
M(\OO_X)(K,\varphi)=M(\OO_{\rm pt})(K,\varphi_K),
$$
where $\varphi_K\!:\Spec(K)\to \Spec(k)$ is the canonical morphism. Recall that
Theorem~6.1 in \cite{Rost} asserts local exactness of the Gersten complex
associated with any cycle module over a point. This implies
Theorem~\ref{theor-polarresol} for this particular case. However, note that an
analogue of Theorem~\ref{theor-polarresol} is not true for an arbitrary cycle
module over $X$.

In order to prove Theorem~\ref{theor-polarquasiis} for $\Gc=\omega_X$
within this approach one uses one more cycle module over a point:
$$
H(K,\varphi_K):=\varinjlim_U\,\mbox{$\bigoplus\limits_{q\geqslant
0}H^q_{dR}(U)$}\,,
$$
where $U$ runs through all open subset in a smooth irreducible variety $V$ over
$k$ with $k(V)=K$ and $H^{\bullet}_{dR}$ denotes the algebraic de Rham
cohomology. This cycle module was previously considered by S.\,Bloch and
A.\,Ogus in~\cite{BO}, where the authors also proved local exactness of the
corresponding Gersten complex for any smooth $X$. Moreover, they constructed a
spectral sequence that converges to $H^{\bullet}_{dR}(X)$ with $E^1$-term being
formed by homogeneous summands of this Gersten complex.

Below we use some facts from mixed Hodge theory (see~\cite{Del}). One
has a decreasing Hodge filtration $F^pH$ on the cycle module $H$ with
$$
F^pH(K,\varphi_K):=\varinjlim_U\,\mbox{$\bigoplus\limits_{q\geqslant
0}F^{p+r}H^q_{dR}(U)$}\,,
$$
where $r:=\dim\,U$. An important fact is that all (higher) differentials in the
Bloch--Ogus spectral sequence are strict with respect to the Hodge filtration.
(By Lefschetz principle, one can assume that $k=\C$, in which case de Rham
cohomology are endowed with mixed Hodge structures, which form an abelian
category with morphisms being strict with respect to the Hodge filtration.)

For $X$ smooth and projective, application of $F^0$ to the Bloch--Ogus spectral
sequence kills all lines except one, which gives the complex
$\Pol_{\bullet}(X,\OO_X)$. (The reason is that for a smooth affine variety $U$
of dimension $r$, we have that $F^{r}H^q_{dR}(U)=0$ unless $q=r$, while
$F^{r}H^r_{dR}(U)=(\omega_{\bar U})_{\pol}(U)$, where $\bar U$ is any smooth
compactification of~$U$.) This proves again Theorem~\ref{theor-polarresol} for
$\F=\OO_X$ and, moreover, shows that
$$
F^dH^{p+d}_{dR}(X)=H^p(X,(\omega_X)_{\pol})\,.
$$
On the other hand, the degeneration of the Hodge spectral sequence implies that
$F^dH^{p+d}_{dR}(X)=H^p(X,(\omega_X))$ Altogether, this implies
Theorem~\ref{theor-polarquasiis} for $\Gc=\omega_X$. Thus, \mbox{cycle} modules lead to
another proof of Theorem~\ref{theor-main} for smooth projective $X$ and
$\F=\OO_X$, that is, the main theorem in \cite{KRT}.

\subsection{Algebraic cycles}

Polar chains give a new construction for the classes of algebraic cycles in
the groups $H^q(X,\Omega_X^q)$, where $X$ is a smooth variety. Namely, let $Z$ be an
irreducible sub\-variety of dimension $p$ in a smooth irreducible variety $X$ of
dimension $d$ and let $f:V\to X$ be a proper morphism birational onto its image with
smooth $V$ and $f(V)=Z$. The differential of $f$ defines a canonical section
$\alpha_Z$ of the following bundle on $V$:
$$
\Homc(\Tc^{p}_V,f^*\,\Tc^{p}_X)=\omega_V\otimes_{\OO_V}f^*\,\Tc^{p}_X \,,
$$
where $\Tc_X^{p}:=\wedge^{p}\Tc_X$. The regular section
$\alpha_Z$ is thus an element in
$$
\Pol_Z(\Tc_X^{p}|_Z)\subset \Pol_p(X,\Tc^{p}_X)\,.
$$
Obviously, $\partial\alpha_Z=0$. By Theorem~\ref{theor-main}, the homology class of
$\alpha_Z$ in the complex $\Pol_{\,\bullet}(X,\Tc^{p}_X)$ defines an element in
$H^{d-p}(X,\Omega^{d-p}_X)$, because
$\omega_X\otimes_{\OO_X}\Tc^{p}_X\cong \Omega^{d-p}_X$ and
$$
H_p(\Pol_{\,\bullet}(X,\Tc^p_X))\cong H^{d-p}(X,\Omega^{d-p}_X)\,.
$$
By linearity, this defines an element in $H^{d-p}(X,\Omega^{d-p}_X)$
for any algebraic $p$-cycle on $X$ with coefficients in $k$.

It follows that the class of an algebraic cycle $\sum_i c_iZ_i$ on $X$, $c_i\in k$,
is trivial in $H^{d-p}(X,\Omega^{d-p}_X)$ if and only if the corresponding polar
cycle is a polar boundary. The latter means that there exists a chain
$\beta\in\Pol_{p+1}(X,\Tc_X^{p})$ such that
$\partial\beta=\oplus\, (c_i\,\alpha_{Z_i})$ with~$\alpha_{Z_i}$ defined as above.
In explicit terms, there exists a collection of $(p+1)$-dimensional irreducible
subvarieties $Y_j\subset X$ and polar elements
$$
\beta_j\in \omega_{k(Y_j)}\otimes_{k(Y_j)}(\Tc^{p}_X|_{_{Y_j}})_{k(Y_j)}
$$
such that, for each $i$, we have that
$\sum_j\res_{Y_jZ_i}(\beta_j)=c_i\alpha_{Z_i}$,
where the sum is taken over~$Y_j$ such that $Z_i\subset Y_j$.

Furthermore, by Theorem~\ref{theor-main}, an arbitrary element in
$H^{d-p}(X,\Omega_X^{d-p})$ can be represented by a polar chain
$$
\gamma=\oplus\gamma_Z\in \Pol_{p}(X,\Tc^{p}_X)
$$
with polar elements $\gamma_Z\in\omega_{k(Z)}\otimes_{k(Z)}(\Tc^{p}_X|_{_Z})_{k(Z)}$.
Then, the class of $\gamma$ in $H^{d-p}(X,\Omega_X^{d-p})$ is represented by an
algebraic cycle if and only if $\gamma$ is homologous in the polar complex
$\Pol_{\,\bullet}(X,\Tc^{p}_X)$ to a chain $\gamma'=\oplus\gamma'_Z$, where each
$\gamma'_Z$ is a multiple of a distinguished element~$\alpha_Z$ in
$\omega_{k(Z)}\otimes_{k(Z)}(\Tc^{p}_X|_{_Z})_{k(Z)}$ defined above.

\subsection{The Cousin complex}

The classical Cousin problem consists in finding a rational (or meromorphic) section
of a locally free sheaf with a given principle part. Similarly, the polar complex
discussed in this paper corresponds to a problem of finding a logarithmic form (with
coefficients in a locally free sheaf) which possesses given residues at its first
order poles. In this subsection we expand on a relation between the polar complex and
the Cousin complex.

Let $X$ be a smooth irreducible quasi-projective variety of dimension $d$. For an
irreducible subvariety $Z\subset X$ and a sheaf of abelian groups $\Pc$ on $X$,
denote by $\gamma_Z\Pc$ the subsheaf of $\Pc$ that consists of sections of $\Pc$
supported on $Z$. Consider $\gamma_Z\Pc$ as a sheaf on $Z$. The functor $\gamma_Z$
from the sheaves on $X$ to the sheaves on $Z$ is left exact. Let $\eta_Z$ denote the
generic point of $Z$. The local cohomology groups $H^i_{\eta_Z}(X,\Pc)$ are defined
as stalks at $\eta_Z$ of the right derived sheaves $R^i\gamma_Z\Pc$:
$$
H^i_{\eta_Z}(X,\Pc):=(R^i\gamma_Z\Pc)_{\eta_Z}.
$$
To every sheaf $\Pc$ one can associate a {\it Cousin complex}\,
$\Cous_{\bullet}(X,\Pc)$,
see~\cite{Har}. We will use a homological notation
for it.
Then, the Cousin complex is a chain complex whose terms are defined as follows:
$$
\mbox{$\Cous_{p}(X,\Pc)=\bigoplus\limits_{Z\in
X_{(p)}}H^{d-p}_{\eta_Z}(X,\Pc)$}.
$$

Let $\Gc$ be a locally free sheaf on $X$. In this case, the groups
$H^i_{\eta_Z}(X,\Gc)$ vanish unless $i=d-p$, where $p=\dim\,Z$,
while for $i=d-p$, we have that
$$
H^{d-p}_{\eta_Z}(X,\Gc)=
{\rm E}_{\OO_{X,Z}}(((\omega^{-1}_X\otimes_{\OO_X}\Gc)|_Z
\otimes_{\OO_Z}{\omega_Z})_{k(Z)})\,,
$$
where ${\rm E}_{\OO_{X,Z}}(M)$ denotes the injective hull of an $\OO_{X,Z}$-module
$M$. Moreover, there is a canonical isomorphism (cf.~\cite{Har}):
\begin{equation}\label{eq-cousin}
H^{d-p}(X,\Gc)\cong H_p(\Cous_{\,\bullet}(X,\Gc))\,.
\end{equation}

Back to polar complexes, Theorem~\ref{theor-polarresol} provides a resolution
$\Pol_{\,\bullet}(X,\F)$ for the sheaf $(\omega_X\otimes_{\OO_X}\F)_{\pol}$\,. The
explicit form of $\Pol_{\,\bullet}(X,\F)$ as of a direct sum over irreducible
subvarieties allows one to show that for any $p$-dimensional irreducible subvariety
$Z$ in~$X$, the local cohomology groups
$H^i_{\eta_Z}(X,(\omega_X\otimes_{\OO_X}\F)_{\pol})$ vanish unless $i=d-p$, where
$p=\dim\,Z$, while for $i=d-p$, we have that
$$
H^{d-p}_{\eta_Z}(X,(\omega_X\otimes_{\OO_X}\F)_{\pol})=\Pol_{Z}(\F|_Z)\,.
$$
This implies that the Cousin complex of the sheaf
$(\omega_X\otimes_{\OO_X}\F)_{\pol}$ coincides with the polar complex of $\F$:
\begin{equation}\label{eq-cous-pol=pol}
\Cous_{\,\bullet}(X,(\omega_X\otimes_{\OO_X}\F)_{\pol})=\Pol_{\,\bullet}(X,\F)\,.
\end{equation}
An injective morphism of sheaves,
$(\omega_X\otimes_{\OO_X}\F)_{\pol}\hookrightarrow\omega_X\otimes_{\OO_X}\F$, induces
a morphism of the corresponding Cousin complexes,
$\Cous_{\,\bullet}(X,(\omega_X\otimes_{\OO_X}\F)_{\pol})\to
\Cous_{\,\bullet}(X,\omega_X\otimes_{\OO_X}\F)$. Thus, by equation~\eqref{eq-cous-pol=pol}, we have an injection,
\begin{equation}\label{eq-pol-in-cous}
\Pol_{\,\bullet}(X,\F)\hookrightarrow\Cous_{\,\bullet}(X,\omega_X\otimes_{\OO_X}\F)\,,
\end{equation}
which is assembled from the injective maps
$$
\Pol_{Z}(\F|_Z)\hookrightarrow (\omega_Z\otimes_{\OO_Z}\F|_Z)_{k(Z)}\hookrightarrow
{\rm E}_{\OO_{X,Z}}((\omega_Z\otimes_{\OO_Z}\F|_Z)_{k(Z)})\,.
$$
Equations~\eqref{eq-cousin} and \eqref{eq-cous-pol=pol} together with
Theorem~\ref{theor-main} imply now that \eqref{eq-pol-in-cous} is a quasiisomorphism
and one can say that, for a locally free sheaf, the Cousin complex is quasiisomorphic
to its own first order pole part.

As an example consider a smooth curve $X$ and
choose $\F=\OO_X$ (cf.\ Example~\ref{examp-curve}). Then,
$$
\Pol_{\,\bullet}(X,\omega_X)=\{0\to \Pol(X,\Omega_X^1)\to
\mbox{$\bigoplus\limits_{x\in X}$}k(x)\to 0\}\,,
$$
$$
\Cous_{\,\bullet}(X,\omega_X)=\{0\to \Omega^1_{k(X)}\to
\mbox{$\bigoplus\limits_{x\in
X}$}\Omega^1_{k(X)}/\Omega^1_{\OO_{X,x}}\to 0\}\,,
$$
where $\Pol(X,\Omega_X^1)$ consists of rational $1$-forms that have only first order
poles on $X$ and for each point $x\in X$, the first order pole part of
$\Omega^1_{k(X)}/\Omega^1_{\OO_{X,x}}$ is identified with $k(x)$ by the residue map.

\begin{rmk}
It might be possible to prove Theorem~\ref{theor-main} in the same way as one proves
that the Cousin complex $\Cous_{\,\bullet}(X,\Gc)$ computes cohomology groups
$H^{\bullet}(X,\Gc)$ for a locally free sheaf $\Gc$ on $X$ (cf.~\cite{Har}). This
could be achieved if, for each closed subset $i:Z\hookrightarrow X$, one would be
able to replace the ``topological'' functor $\gamma_Z$ relevant for $\Cous_{\,\bullet}$
by the ``algebro-geometric'' functor $i^!$ relevant for $\Pol_{\,\bullet}$\,.
\end{rmk}

\subsection{The adelic complex}

A.\,N.\,Parshin \cite{Par} introduced a certain complex $\A_{rat}(X,\F)^{\bullet}$
for a quasi-coherent sheaf $\F$ on a surface $X$, nowadays called the {\it rational
adelic complex}. Generalizations to higher dimensions were studied in a number of
papers, in particular, see~\cite{Par2}, \cite{Bei}, and \cite{Hub}. One of the main
properties of the rational adelic complex $\A_{rat}(X,\F)^{\bullet}$ is that it gives
a flabby resolution for any quasi-coherent sheaf $\F$ on $X$. In particular, there is
canonical isomorphism $H^p(X,\F)\cong H^p(\A_{rat}(X,\F)^{\bullet})$. For a smooth
complex algebraic variety, there are several analogies between the adelic resolution
and Dolbeault resolution (see, e.g.,~\cite{Yek}). Although any two resolutions can be
related with help of the canonical Godement construction, this is by no means
explicit. In this somewhat sketchy subsection we describe a construction of a chain of
quasiisomorphisms between the adelic and Dolbeault resolutions for smooth complex
projective varieties, such that each term of the chain is represented by an explicit
geometric construction.

A version of adelic complexes for sheaves of abelian groups was defined in~\cite{Gor}
(a different definition, appropriate to a more general situation, is given
in~\cite{BV}). Let us briefly describe the shape of this adelic complex. Suppose that
$X$ is an irreducible variety and $\Pc$ is a sheaf of abelian groups on $X$. A
non-degenerate flag on $X$ is a strictly decreasing chain of irreducible subvarieties
in $X$:
$$
Z_0\supsetneq Z_1\supsetneq\ldots\supsetneq Z_p   \;.
$$
Each term of the adelic complex $\A(X,\Pc)^{\bullet}$ is by definition  a certain
subgroup,
$$
\A(X,\Pc)^p\subset \prod_{(Z_0\ldots Z_p)}\Pc_{\eta_0}\,,
$$
in the direct product of stalks $\Pc_{\eta_0}$ of the sheaf $\Pc$, where the product
is taken over non-degenerate flags $(Z_0\ldots Z_p)$ in $X$, while each stalk
$\Pc_{\eta_0}$ is taken at the generic point $\eta_0\in Z_0$ of the largest elements
$Z_0$ in a flag. Thus, an element $f\in \A(X,\Pc)^p$ is a collection of elements
$f_{Z_0\ldots Z_p}\in \Pc_{\eta_0}$ subject to a certain condition, which we do not
specify explicitly here. The differential in the adelic complex is given by the
formula
$$
d(f)_{Z_0\ldots Z_{p+1}}=
\sum_{i=0}^{p+1}(-1)^if_{Z_0\ldots\hat{Z}_i\ldots Z_{p+1}}\in \Pc_{\eta_0}\,,
$$
where the hat means that we omit the corresponding element in a flag.

For example, if $X$ is a smooth curve, we have
$$
\A(X,\Pc)^{\bullet}=\{0\to \Pc_{\eta}\oplus\prod_{x\in X}\Pc_x\to
{\prod_{x\in X}}'\Pc_{\eta}\to 0\}\,,
$$
where $\eta$ denotes the generic point of $X$ and the symbol $\prod'$
means that we consider the set of collections
$$
f=(f_x)\in\prod_{x\in X}\Pc_{\eta}
$$
such that $f_x\in {\rm Im}(\Pc_x\to \Pc_{\eta})$ for almost all
$x\in X$. The latter condition explains the name ``adelic'' for the
complex $\A(X,\Pc)^{\bullet}$.

It is proved in~\cite{Gor} that for a smooth $X$ and a wide range of sheaves $\Pc$ on
$X$, the adelic complex $\A(X,\Pc)^{\bullet}$ gives a flabby resolution of $\Pc$.
This seems to extend to the case $\Pc=\Gc_{\pol}$\,, where $\Gc$ is a locally free
sheaf; to achieve this one should combine methods from~\cite{Gor} with the last part
of the proof in Section~\ref{section-polarresol}. Thus, for a smooth variety $X$, we
may assume that the following isomorphism holds:
$$
H^i(X,(\omega_X\otimes_{\OO_X}\F)_{\pol})\cong
H^i(\A(X,(\omega_X\otimes_{\OO_X}\F)_{\pol})^{\bullet})\,.
$$

On the other hand, it follows from the definitions of the rational adelic complex
$\A_{rat}(X,\Gc)^{\bullet}$ and the adelic complex $\A(X,\Pc)^{\bullet}$ with
$\Pc=\Gc_{\pol}$ that there is a canonical injective morphism of complexes
$$
\A(X,\Gc_{\pol})^{\bullet}\hookrightarrow
\A_{rat}(X,\Gc)^{\bullet},
$$
which agrees with the embedding of sheaves
$\Gc_{\pol}\hookrightarrow \Gc$. The above assumption implies that
this is a quasiisomorphism.

For an arbitrary sheaf of abelian groups $\Pc$, there is a morphism of complexes from
the adelic complex to the Cousin complex. Let us consider the case when
$\Pc=\Gc_{\pol}$ as above. Then, for each $p$, $0\leqslant p\leqslant d=\dim\,X$,
there is a surjective map
$$
\nu:\A(X,\Gc_{\pol})^p\to \Cous_{d-p}(X,\Gc_{\pol})=
\Pol_{d-p}(X,\omega^{-1}_X\otimes_{\OO_X}\Gc)
$$
defined by the formula
$$
\nu(f)_{Z}=(-1)^{\frac{p(p+1)}{2}}\sum_{(XZ_1\ldots Z_{p-1}Z)}
(\res_{Z_{p-1}Z}\,\circ\ldots\circ\,\res_{XZ_{1}})(f_{XZ_1\ldots Z_{p-1}Z})\in
\Pol_Z((\omega^{-1}_X\otimes_{\OO_X}\Gc)|_Z)\,,
$$
where $Z\subset X$ is an irreducible subvariety of codimension $p$ in $X$ and
$(X Z_1\ldots Z_{p-1}Z)$ are complete non-degenerate flags between $X$ and $Z$.
Here we use that $f_{XZ_1\ldots Z_{p-1}Z}$ belongs to
$\Pc_{\eta}=\Pol_X(\omega^{-1}_X\otimes_{\OO_X}\Gc)$,
where $\eta$ is the generic point of $X$. We have in fact
a surjective morphism of complexes
$$
\nu\!:\A(X,\Gc_{\pol})^{\bullet}
\twoheadrightarrow\Pol_{d-\bullet}(X,\omega^{-1}_X\otimes_{\OO_X}\Gc)\,.
$$
The above discussion, together with Theorem~\ref{theor-main}, implies that this
morphism is a quasiisomorphism.

On the other hand, if we choose $k=\C$, we can consider, on a smooth variety $X$ as
on any complex manifold, the following natural complexes constructed with help of the
Dolbeault $\bar\partial$-operator. Let $\Lambda^{0,p}(X,\Gc)$ be the space of
$C^\infty$ differential $(0,p)$-forms on $X$ with coefficients in the holomorphic
vector bundle corresponding to $\Gc$. Then, the $\bar\partial$-operator,
$\bar\partial\!:\Lambda^{0,p}(X,\Gc)\to\Lambda^{0,p+1}(X,\Gc)$, gives rise to a
complex $\Lambda^{0,\bullet}(X,\Gc)$, which computes the cohomology groups of the
sheaf associated with $\Gc$ in the analytic topology. One can also consider
differential forms with distributional coefficients, or currents, $D^{0,p}(X,\Gc)$,
which form a complex as well, $\bar\partial\!:D^{0,p}(X,\Gc)\to D^{0,p+1}(X,\Gc)$. A
well-known property of these complexes (see~\cite{GH}) is that the obvious embedding,
$$
\Lambda^{0,\bullet}(X,\Gc)\hookrightarrow D^{0,\bullet}(X,\Gc) \,,
$$
is in fact a quasiisomorphism. The elements of the space of currents $D^{0,p}(X,\Gc)$
are, by definition, certain linear functionals on
$\Lambda^{0,d-p}_c(X,\omega_X\otimes_{\OO_X}\Gc^\vee)$, the space of $C^\infty$
differential forms on $X$ with compact support, where $\Gc^\vee$ is dual to $\Gc$.
Recall now that a polar $r$-chain $\alpha\in\Pol_Z(X,\F)\subset\Pol_r(X,\F)$, where
$Z\subset X$ is an $r$-dimensional irreducible subvariety in $X$, can also be used to
define a linear functional on $\Lambda^{0,r}_c(X,\F^\vee)$: take a $(0,r)$-form
$u\in\Lambda^{0,r}_c(X,\F^\vee)$, restrict it to $Z$, and integrate the product of
$(2\pi i)^{-r}\alpha$ and  $u|_Z$ over $Z$ (it is important here that $\alpha$ has
only first order poles, for these are locally integrable singularities). This gives
us yet another embedding of complexes (considered also in~\cite{KR}):
$$
\Pol_{d-\bullet}(X,\omega_X^{-1}\otimes_{\OO_X}\Gc)
\hookrightarrow D^{0,\bullet}(X,\Gc) \,.
$$
Collecting all above together, we obtain the following chain of morphisms
of complexes:
\begin{equation}\label{eq-chain-qii}
\A_{rat}(X,\Gc)^{\bullet}\hookleftarrow \A(X,\Gc_{\pol})^{\bullet}\twoheadrightarrow
\Pol_{d-\bullet}(X,\omega^{-1}_X\otimes_{\OO_X}\Gc)\hookrightarrow
D^{0,\bullet}(X,\Gc)\hookleftarrow\Lambda^{0,\bullet}(X,\Gc) \,.
\end{equation}
In the case when $X$ is a smooth projective variety, the analytic and Zariski
cohomology groups of $\Gc$ are the same. Therefore, all complexes in
\eqref{eq-chain-qii} have the same cohomology groups and we obtain a chain of
quasiisomorphisms connecting the rational adelic complex $\A_{rat}(X,\Gc)^{\bullet}$
and the Dolbeault complex $\Lambda^{0,\bullet}(X,\Gc)$.

\end{document}